\RequirePackage[l2tabu, orthodox]{nag}
\documentclass[11pt]{amsart} 

\author{Layne Hall}
\address{School of Mathematics and Statistics, The University of Melbourne, Victoria 3010 Australia}
\email{lrhall@student.unimelb.edu}
\author{Andy Hammerlindl}
\address{School of Mathematical Sciences, Monash University, Victoria 3800 Australia}
\urladdr{ http://users.monash.edu.au/~ahammerl/} 
\email{andy.hammerlindl@monash.edu}
\title{Classification of partially hyperbolic surface endomorphisms}
\newcommand\funding[1]{%
	\begingroup
	\renewcommand\thefootnote{}\footnote{#1}%
	\addtocounter{footnote}{-1}%
	\endgroup
}
\providecommand{\keyword}[1]
{\textbf{Keywords:} #1}

\usepackage{amssymb}
\usepackage{amsmath}
\usepackage{amsthm}
\usepackage{mathtools}
\usepackage{float}
\usepackage{bold-extra}
\usepackage[font={small}]{caption}
\usepackage{afterpage}
\usepackage{subcaption}
\usepackage{verbatim}
\usepackage[inline]{asymptote}
\usepackage{pgfplots}
\usepackage{csquotes}
\usepackage{bm}
\usepackage{enumerate}
\usepackage{textcomp}
\usepackage{titlesec}
\usepackage{nameref}
\usepackage{lipsum}
\usepackage{enumitem}
\usepackage{mathrsfs}
\usepackage[capitalise]{cleveref}
\theoremstyle{definition}

\theoremstyle{Theorem}
\newtheorem{thm}{Theorem}

\newtheorem{theorem}{Theorem}

\newtheorem{prop}[thm]{Proposition}
\newtheorem{sublem}[thm]{Sublemma}
\theoremstyle{Theorem}
\newtheorem{lem}[thm]{Lemma}
\theoremstyle{Theorem}

\theoremstyle{Theorem}
\newtheorem{cor}[thm]{Corollary}
\theoremstyle{remark}
\newtheorem{rem}[thm]{Remark}

\DeclareMathOperator{\eps}{\varepsilon}

\DeclareMathOperator{\bbT}{\mathbb{T}}
\DeclareMathOperator{\bbR}{\mathbb{R}}

\DeclareMathOperator{\bbZ}{\mathbb{Z}}
\DeclareMathOperator{\Cone}{\mathcal{C}}

\def\restrict#1{\raise-.5exhbox{\ensuremath|}_{#1}}
\DeclareMathOperator{\bran}{\mathcal{F}^c_{\mathrm{bran}}}

\DeclareMathOperator{\feps}{\mathcal{F}^c_{\mathrm{\eps}}}

\DeclareMathOperator{\dist}{\mathrm{dist}}
\DeclareMathOperator{\lf}{\mathcal{L}}

\DeclareMathOperator{\fmax}{\mathcal{F}_{\mathrm{max}}}
\DeclareMathOperator{\fmin}{\mathcal{F}_{\mathrm{min}}}
\DeclareMathOperator{\fcw}{\mathcal{F}_{\mathrm{cw}}}
\DeclareMathOperator{\fccw}{\mathcal{F}_{\mathrm{ccw}}}

\newcommand{\inv}{^{-1}}
\numberwithin{thm}{section}
\numberwithin{equation}{section}
\numberwithin{figure}{section}

\titleformat{\title}
{\normalfont\scshape\center}{\title}{1em}{}
\titleformat{\section}
{\normalfont\scshape\center}{\thesection}{1em}{}
\titleformat{\subsection}
{\normalfont\bfseries}{\thesubsection}{1 em}{}
\usepackage{fancyhdr}

\usepackage[
backend=biber,
style=alphabetic,
natbib=true,
url=false, 
doi=true,
eprint=false,
maxbibnames=9,
maxcitenames=9
]{biblatex}
\addbibresource{enco.bib}

\begin{document}
	\maketitle
	\begin{abstract}
	We show that in the absence of periodic centre annuli, a partially hyperbolic surface endomorphism is dynamically coherent and leaf conjugate to its linearisation. We proceed to characterise the dynamics in the presence of periodic centre annuli. This completes a classification of partially hyperbolic surface endomorphisms.
	\\
	
	\noindent \keyword{Partial hyperbolicity, Non-invertible dynamics, Dynamical coherence.}
	\end{abstract}
	\funding{This work was partially funded by the Australian Research Council.}
	\section{Introduction}
    The dynamics of non-invertible surface maps are less understood than their invertible counterparts. In \cite{enco}, it is shown that certain classes of partially hyperbolic surface endomorphisms are leaf conjugate to linear maps. Such a comparison to linear maps cannot be achieved in general, with \cite{enco}, \cite{heshiwang} and \cite{tan} all constructing examples which do not admit centre foliations. Further, \cite{tan} introduces the notion of a periodic centre annulus as a geometric mechanism for failure of integrability of the centre direction. In this paper, we show that periodic centre annuli are the unique obstruction to dynamical coherence, and give a classification up to leaf conjugacy in their absence. We further give a characterisation of endomorphisms with periodic centre annuli, completing a classification of partially hyperbolic surface endomorphisms.
	
	Before stating our results, we recall preliminary definitions. A \emph{cone family} $\Cone \subset TM$ consists of a closed convex cone $\Cone(p) \subset T_p M$ at each point $p \in M$. A cone family is \emph{$Df$-invariant} if $D_p f \left(\Cone(p)\right)$ is contained in the interior of $\Cone(f(p))$ for all $p\in M$. A map $f:M\to M$ is a \emph{(weakly) partially hyperbolic endomorphism} if it is a local diffeomorphism and it admits a cone family $\Cone^u$ which is $Df$-invariant and such that $1 < \| Df v^u \|$
	for all $v^u \in \Cone^u$. We call $\Cone^u$ an unstable cone family.  Let $M$ be a closed oriented surface. A partially hyperbolic endomorphism on $M$ necessarily admits a centre direction, that is, a $Df$-invariant line field $E^c\subset TM$ \cite[Section 2]{cropot2015lecture}. The existence of this line field and orientability in turn imply that $M=\bbT^2$.
	
    The homotopy classes of endomorphisms play a key role in their classification. To each partially hyperbolic endomorphism $f:\bbT^2\to\bbT^2$, there exists a unique linear endomorphism $A: \bbT^2 \to \bbT^2$ which is homotopic to $f$. We call $A$ the \emph{linearisation} of $f$. If $\lambda_1$ and $\lambda_2$ are the (not necessarily distinct) eigenvalues of $A$, then $A$ is one of three types:
	\begin{itemize}
		\item if $|\lambda_1| < 1 < |\lambda_2|$, we say $A$ is \emph{hyperbolic} if, 
		\item if $1<|\lambda_1| \leq |\lambda_2|$, we say $A$ is \emph{expanding}, and
		\item if $1=|\lambda_1|<|\lambda_2|$, we say $A$ is \emph{non-hyperbolic}.
	\end{itemize}
	When $A$ is hyperbolic or expanding, then we have the Franks semiconjugacy $H:\bbT^2\to\bbT^2$ from $f$ to $A$ \cite{Franks1}.
	
	The existing classification of surface endomorphisms relies on integrability of the centre direction. We say a partially hyperbolic endomorphism of $\bbT^2$ is \emph{dynamically coherent} if there exists an $f$-invariant foliation tangent to $E^c$. Otherwise, we say that the endomorphism is \emph{dynamically incoherent}. Though an endomorphism does not necessarily admit a centre foliation, it always admits a centre \emph{branching foliation}: an $f$-invariant collection of $C^1$ curves tangent to $E^c$ which cover $\bbT^2$ and do not topologically cross. A construction of such an object tangent to a general continuous distribution is carried out in \S 5 of \cite{BI}.
	
	Suppose that $f,\, g:\bbT^2 \to \bbT^2$ are partially hyperbolic endomorphisms which are dynamically coherent. We say that  $f$ and $g$ are \emph{leaf conjugate} if there exists a homeomorphism $h: \bbT^2 \to \bbT^2$ which takes centre leaves of $f$ to centre leaves of $g$, and which satisfies
	\[
	h(f(\mathcal{L})) = g(h(\mathcal{L}))
	\]
for every center leaf $\mathcal{L}$ of $f$.
	In \cite{enco}, it shown that if $f:\bbT^2\to\bbT^2$ is a partially hyperbolic surface endomorphism whose linearisation $A$ is hyperbolic, then $f$ is dynamically coherent and leaf conjugate to $A$.
	
    There also exist dynamically incoherent partially hyperbolic surface endomorphisms. The first known examples were constructed in \cite{heshiwang} and \cite{enco} and are homotopic to non-hyperbolic linear maps. Examples homotopic to expanding linear maps were later constructed in \cite{tan}, leading to the result that every linear map on $\bbT^2$ with integer eigenvalues $\lambda_1, \lambda_2$ satisfying $|\lambda_2| > 1$ is homotopic to an incoherent partially hyperbolic surface endomorphism. A geometric mechanism for incoherence called a periodic centre annulus was introduced in \cite{tan}. A \emph{periodic centre annulus} is an immersed open annulus $X\subset\bbT^2$ such that $f^k(X)=X$ for some $k>0$ and whose boundary, which must consist of either one or two disjoint circles, is $C^1$ and tangent to the centre direction. We further require that a periodic centre annulus is minimal, in the sense that there is no smaller annulus $Y\subsetneq X$ with the same properties.
	
	Of the incoherent examples constructed in \cite{tan}, an interesting class are those homotopic to linear maps which are homotheties or non-trivial Jordan blocks, which themselves as maps on $\bbT^2$ are not partially hyperbolic. Our first result is that such linearisations are, in a sense, defective. 
	\begin{theorem}\label{thm:linearisation}
		Let $f:\bbT^2\to\bbT^2$ be a partially hyperbolic endomorphism with linearisation $A:\bbT^2\to\bbT^2$. If $f$ does not admit a periodic centre annulus, then the eigenvalues of $A$ have distinct magnitudes.
	\end{theorem}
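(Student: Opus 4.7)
I would proceed by contradiction. Suppose $f$ admits no periodic centre annulus yet $|\lambda_1|=|\lambda_2|$. The hyperbolic and non-hyperbolic classes of linearisations already satisfy $|\lambda_1|\ne|\lambda_2|$, so $A$ must be expanding. Since $A$ is represented by an integer matrix and any partially hyperbolic $f$ on $\bbT^2$ has a real invariant centre direction that descends to $A$, the eigenvalues of $A$ are real; equality of magnitudes then forces $A$ to be conjugate over $\bbR$ to one of three normal forms: the homothety $\lambda\Id$, a non-trivial Jordan block with eigenvalue $\lambda$, or the diagonal matrix with entries $\lambda,-\lambda$, with $|\lambda|>1$ throughout.

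The plan combines the Franks semiconjugacy $H$ of $f$ to $A$ with the centre branching foliation $\bran$. Lifting to $\bbR^2$ gives $\tilde H$ with $\sup|\tilde H-\Id|<\infty$ and $\tilde H\circ\tilde f=A\circ\tilde H$, together with the lifted branching foliation $\widetilde{\bran}$ whose leaves are properly embedded $C^1$ curves tangent to the lifted centre bundle. The main technical step is an asymptotic-direction rigidity: each end of each leaf of $\widetilde{\bran}$ has a well-defined direction at infinity, and this direction must be an eigendirection of $A$. The key ingredient is that iterating $\tilde H\circ\tilde f^n=A^n\circ\tilde H$ with $\tilde H-\Id$ bounded allows one to read off the asymptotic direction of a leaf $\tilde L$ from the $A$-dynamics on $\tilde H(\tilde L)$; equality of the magnitudes of the eigenvalues of $A$ then constrains these directions to be $A$-invariant, while partial hyperbolicity (strict expansion of the unstable cone relative to the centre) rules out leaves of wild transverse geometry.

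Given the rigidity I would produce a periodic centre annulus. In the homothety case every direction is $A$-invariant, so distinct centre leaves may carry different asymptotic slopes; selecting extremal leaves on either side and lifting one isolates a pair of centre leaves bounding an $f^k$-invariant strip on $\bbR^2$, which descends to an invariant annulus on $\bbT^2$, and shrinking to a minimal such annulus gives a periodic centre annulus with $C^1$ boundary tangent to $E^c$. In the Jordan and diagonal cases the $A$-invariant directions are rational (forced by integrality of $A$ together with the scarcity of invariant directions), and the same extremal argument again bounds an annulus between two distinguished centre leaves, contradicting the hypothesis in every case.

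The hard part is the asymptotic-direction analysis. The semiconjugacy $\tilde H$ collapses information precisely in the centre direction, so to pin down the direction at infinity of a centre leaf one must leverage partial hyperbolicity quantitatively against the $A$-dynamics, excluding leaves of $\widetilde{\bran}$ whose geometry is compatible with the equal-magnitude expansion of $A$ but fails to force any invariant annular structure on $\bbT^2$.
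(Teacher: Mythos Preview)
Your overall strategy---derive a contradiction by building a periodic centre annulus out of the asymptotic directions of leaves of $\widetilde{\bran}$---is genuinely different from the paper's, but it contains a real gap in the homothety case that I do not see how to close.

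The difficulty is this. Once you assume there is no periodic centre annulus, the approximating foliation $\feps$ of $\bran$ has no Reeb components and no tannuli (this is \cref{nostrip}/\cref{lem:circles} in the paper), so it descends to a suspension of a circle homeomorphism. Consequently every lifted leaf of $\widetilde{\bran}$ lies within bounded distance of a \emph{single} linear foliation, and in particular all leaves share the \emph{same} asymptotic direction. Your homothety argument hinges on the claim that ``distinct centre leaves may carry different asymptotic slopes'' and then on selecting ``extremal'' leaves to bound an invariant strip; but the suspension structure rules out any variation in slope, so there are no extremal leaves to select. The semiconjugacy $\tilde H$ does not help here: $A^n\circ\tilde H$ applied to a leaf just rescales a set that already sits near one line, and you never see a second direction emerge. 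The Jordan-block case has the same issue: you correctly note the eigendirection is rational, but the ``same extremal argument'' you invoke has already collapsed, and you give no alternative mechanism for producing an annulus.

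By contrast, the paper never tries to manufacture an annulus. It works directly on the circle $S$ of rays and exploits two distinguished branching foliations $\fmax$, $\fmin$ together with small clockwise and counter-clockwise perturbations $\fcw$, $\fccw$ of $E^c$. The key lemma is that $A^{-n}R(\fcw)\to R(\fmin)$ and $A^{-n}R(\fccw)\to R(\fmax)$, obtained by showing that backward iterates of the perturbed foliations converge (Arzel\`a--Ascoli) to the lowest, respectively highest, forward centre curve through the origin. One then reads off a contradiction from the dynamics of $A$ on $S$: for a homothety $A$ acts as the identity on $S$, so $A^{-n}R(\fcw)$ is constant and cannot reach $R(\fmin)\ne R(\fcw)$; for a Jordan block the unique fixed ray in the admissible half $S^+$ must equal both $R(\fmax)$ and $R(\fmin)$, forcing one of $R(\fcw)$, $R(\fccw)$ to lie on its repelling side, again contradicting the limit lemma. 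This perturbation-and-ray-dynamics argument is what actually does the work, and your proposal has no analogue of it.
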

	In light of the preceding theorem, the absence of a periodic centre annulus implies that the linearisation is itself a partially hyperbolic endomorphism. In this absence it then makes sense to talk of leaf conjugacies of a map to its linearisation. A main result of this paper is that periodic centre annuli are the unique obstruction to classification up to leaf conjugacy.
	\begin{theorem}\label{thm:classi}
		Let $f:\bbT^2\to\bbT^2$ be partially hyperbolic. If $f$ does not admit a periodic centre annulus, then $f$ is dynamically coherent and leaf conjugate to its linearisation.
	\end{theorem}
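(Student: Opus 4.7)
By \Cref{thm:linearisation} the linearisation $A$ has eigenvalues of distinct magnitudes, so $A$ is itself partially hyperbolic. The hyperbolic case is already in \cite{enco}, so my plan is to treat the remaining cases: $A$ expanding with $1<|\lambda_1|<|\lambda_2|$, and $A$ non-hyperbolic with $1=|\lambda_1|<|\lambda_2|$. In both cases I would begin with a centre branching foliation $\bran$ tangent to $E^c$ via the Burago--Ivanov construction of \cite{BI}, and lift it together with $f$ to an $\tilde f$-invariant branching foliation on the universal cover $\bbR^2$.

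The first step would be to control the large-scale geometry of leaves of $\bran$. Using the $Df$-invariant unstable cone family $\Cone^u$, unstable curves in the universal cover grow exponentially in a fixed direction, while $\tilde f$ differs from its linear part by a uniformly bounded amount. From this I would deduce that each lifted centre leaf is a properly embedded line contained in a uniformly bounded tube around some translate of the weak-eigendirection axis of $A$, and that two lifted leaves in the same tube are linearly ordered along the transverse strong direction. The expanding and non-hyperbolic cases would diverge here: in the expanding case one can bootstrap from the Franks semiconjugacy $H$ to pin leaves to the weak axis, whereas in the non-hyperbolic case the bounds must be derived directly from the cone field together with the block form of $A$.

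Next I would run a dichotomy. If no two distinct lifted leaves of $\bran$ share a point, then $\bran$ descends to a genuine $f$-invariant centre foliation $\mathcal{F}^c$ and dynamical coherence is immediate. Otherwise, there exist two distinct lifted leaves that merge along a common sub-arc and then separate, bounding a region whose boundary is tangent to $E^c$. Iterating such a region under $\tilde f$, combining with the deck action, and passing to a suitable minimal invariant choice, I would extract an invariant open set whose boundary consists of one or two $C^1$ centre circles; its projection to $\bbT^2$ is a periodic centre annulus, contradicting the hypothesis. Once coherence is established, the leaf conjugacy $h$ would be built by projecting centre leaves of $\tilde f$ along the strong direction of $\tilde A$ onto the linear centre foliation of $A$, and promoting this leaf-wise bijection to an equivariant homeomorphism of $\bbT^2$ using the bounded-distance estimates of the first step.

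The hard part will be the extraction of the periodic annulus in the third step, especially in the non-hyperbolic case: a priori a branching locus could propagate non-periodically along the weak axis, and one must show that $f$-invariance together with cone geometry force the bounding arcs to close up into a genuinely annular invariant subset of $\bbT^2$ rather than into an unbounded strip in $\bbR^2$. Making the cone-field control sharp enough to rule out such asymptotic branching, and to guarantee $C^1$ smoothness of the resulting boundary circles, looks like the technical heart of the proof.
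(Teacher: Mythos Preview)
Your first step and your leaf-conjugacy sketch are broadly aligned with the paper: lifted centre leaves lie in bounded tubes around the weak eigendirection (\cref{lem:almostparallel}, \cref{lem:Cbound}), and the conjugacy is built from a Franks-type projection $H^c:\bbR^2\to\mathcal{A}^c$ followed by an averaging along centre leaves. The genuine gap is your coherence argument. You propose that if two leaves of $\bran$ merge, one can trap a region between them and extract a periodic centre annulus. This step---which you yourself flag as ``the hard part''---is not justified and faces obstacles beyond the one you name: two merged leaves may coincide along a half-line and diverge only on the other side, so no bounded region need exist; even when it does, there is no evident compactness or recurrence forcing its projection to $\bbT^2$ to close up into a \emph{periodic} annulus, let alone one with $C^1$ boundary and satisfying the minimality clause in the definition.

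The paper avoids this extraction entirely. It cashes in the work of \cref{sec:inside}: absence of periodic centre annuli rules out Reeb components and tannuli (\cref{nostrip}), so the approximating foliation $\feps$ descends to a suspension of a circle homeomorphism. From this suspension structure one shows (\cref{globaldelta}, \cref{unstableendpoints}) that any two distinct points on a common unstable leaf satisfy $|\pi^u f^n(p)-\pi^u f^n(q)|\to\infty$. But if two leaves $\mathcal{L}_1,\mathcal{L}_2\in\bran$ share a point, the $\pi^u$-diameter of $f^n(\mathcal{L}_1\cup\mathcal{L}_2)$ stays bounded by $2C$ for all $n$ by \cref{lem:Cbound}, while nearby points of $\mathcal{L}_1$ and $\mathcal{L}_2$ joined by a short unstable arc would have to separate unboundedly in $\pi^u$---contradiction. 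Thus branching is impossible and $\bran$ is already a genuine foliation; no annulus is ever constructed. Your case split on the type of $A$ is also unnecessary: once \cref{thm:linearisation} is available, the argument in \cref{sec:coherent} is uniform across the hyperbolic, expanding, and non-hyperbolic cases.
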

	To give a comprehensive classification of partially hyperbolic surface endomorphisms, we look to understand dynamics of endomorphisms with periodic centre annuli. Our setting for this classification bears some similarity to diffeomorphisms on $\bbT^3$, where centre-stable tori are known to be the unique obstruction to both dynamical coherence and leaf conjugacy \cite{potrie,hp2014pointwise}. The analogue of a periodic centre annulus in this other setting is a region between centre-stable tori. A classification of such diffeomorphisms is given in \cite{clab}, where it is shown that there are finitely many centre-stable tori and the dynamics on the regions between such tori take the form of a skew product. The finiteness of periodic centre annuli also holds in our setting, and while not difficult to prove, is essential for our classification.
	\begin{theorem}\label{thm:finiteannuli}
		Let $f:\bbT^2\to\bbT^2$ be a partially hyperbolic endomorphism. Then $f$ admits at most finitely many periodic centre annuli $X_1,\dots,X_n$.
	\end{theorem}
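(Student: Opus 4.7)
The plan is to argue by contradiction: assume $f$ admits infinitely many distinct periodic centre annuli $X_1, X_2, \ldots$, and derive a contradiction by first showing that they have pairwise disjoint interiors and then obtaining a positive uniform lower bound on their transversal widths via expansion in the unstable cone.

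The first step is disjointness. The boundary of each $X_i$ consists of essential $C^1$ circles tangent to $E^c$, hence leaves of the centre branching foliation from \S 5 of \cite{BI}, and leaves of this branching foliation do not topologically cross. So for distinct $X_i, X_j$ with $X_i \cap X_j \ne \emptyset$, either one is contained in the other---contradicting minimality---or their intersection $X_i \cap X_j$ is a proper open sub-annulus of both, whose boundary lies in $\partial X_i \cup \partial X_j$ and is hence tangent to $E^c$, and which is invariant under $f^{\mathrm{lcm}(k_i, k_j)}$. A minimal periodic centre sub-annulus inside $X_i \cap X_j$ then strictly refines $X_i$, again contradicting its minimality. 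Hence distinct periodic centre annuli have disjoint interiors.

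With disjointness, $\sum_i \mathrm{area}(X_i) \le 1$, so if infinitely many $X_i$ exist then their transversal widths $w_i$ tend to zero. I would rule this out via expansion in the unstable cone: inside a strip bounded by curves tangent to $E^c$, any curve with tangent in $\Cone^u$ has length at most $w/\sin\theta_0$, where $\theta_0 > 0$ is the uniform minimum angle between $\Cone^u$ and $E^c$ provided by partial hyperbolicity, since partial hyperbolicity forces the transverse component of the tangent to have magnitude at least $\sin\theta_0$ along the curve. Now $f^{k_i}(X_i) = X_i$, and a lift $F^{k_i}$ moves the lifted strip to a translate via the equivariance $F^{k_i}(x+v) = F^{k_i}(x) + A^{k_i} v$ for $v \in \bbZ^2$, so iterates $F^{n k_i}(\gamma)$ of an unstable-cone curve $\gamma$ remain inside strips of width $w_i$ while their lengths grow by at least $\lambda^{n k_i}$. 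For $w_i$ below a fixed threshold depending only on $f$ this is incompatible with the existence of unstable-cone curves of positive length at every point of $X_i$, giving the desired uniform lower bound on $w_i$ and hence finiteness via the area estimate.

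The main obstacle I expect is that periodic centre annuli are only assumed \emph{immersed} in $\bbT^2$, not embedded, so a single lift to $\bbR^2$ is not a clean disjoint union of strips and the expansion-in-a-strip argument cannot be run directly there. The natural remedy is to work instead with the abstract universal cover of the immersed annulus $X_i$ itself---a copy of $\bbR \times (0,1)$---lifting $f^{k_i}|_{X_i}$ to that cover; the strip geometry is clean there, and the expansion-vs-width dichotomy then yields the contradiction.
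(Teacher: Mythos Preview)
Your width argument in the second half is essentially the paper's: iterate a short unstable curve inside the periodic annulus until it has length at least $1$, then use the uniform angle between $\Cone^u$ and $E^c$ to force a uniform lower bound on the width. That part is fine.

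The gap is in your disjointness step. You assert that the boundary circles of the $X_i$, being $C^1$ essential circles tangent to $E^c$, are ``hence leaves of the centre branching foliation from \S 5 of \cite{BI}'' and therefore do not topologically cross. This inference is not valid: the branching foliation constructed in \cite{BI} is a \emph{specific} $f$-invariant collection of center curves, and an arbitrary center curve---in particular a boundary circle of some periodic centre annulus---need not belong to it. When $E^c$ is not uniquely integrable there can be many center curves through a point, and two center circles can certainly cross each other topologically (tangentially) without either being a leaf of the chosen branching foliation. Indeed, the paper proves the non-crossing of these boundary circles as a separate lemma \emph{before} the finiteness theorem, and it does so by a direct length-versus-volume argument: if two boundary circles crossed (or had different slopes), one manufactures an $\tilde f$-invariant region of bounded area on the universal cover and derives a contradiction from unbounded growth of an unstable segment inside it. Only \emph{after} finiteness is established does the paper show (in a subsequent lemma) that one can modify a given branching foliation so as to contain all these boundary circles---and that construction uses both the non-crossing and the finiteness already proved. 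So your argument is circular as written: you are invoking a property of a branching foliation that is only known to contain the boundary circles once the theorem you are proving is already in hand. To repair the disjointness step you need an independent argument that boundary circles of distinct periodic centre annuli cannot cross; the length-versus-volume mechanism is the natural one here.
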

	We can follow the same procedure as \cite{clab} to show that the dynamics on a periodic centre annulus also takes the form of a skew product. This result is stated as \cref{prop:skew} and discussed in \cref{sec:orbits}, though as it is a straightforward adaption, its complete proof is deferred to an auxiliary document \cite{clab2}.
	
	Unlike diffeomorphisms on $\bbT^3$, the fact that the dynamics on a periodic centre annulus takes the form of a skew product is not a complete classification in our setting, as it is not necessarily true that the annuli cover $\bbT^2$. This is seen in the examples of \cite{tan}. However, this is at least true for the examples in \cite{heshiwang} and \cite{enco}, and in fact holds for any endomorphism with non-hyperbolic linearisation, as stated in the following result.
	\begin{theorem}\label{thm:nonhypclassi}
        Let $f:\bbT^2\to\bbT^2$ be a partially hyperbolic endomorphism with
        non-hyperbolic linearization $A$
        and with at least one periodic center annulus.
        Then $\bbT^2$ is the union of the closures
        of all of the periodic center annuli of $f$.
	\end{theorem}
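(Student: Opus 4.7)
The plan is to show $\bbT^2 = \bigcup_i \overline{X_i}$ by assuming, for contradiction, that $U := \bbT^2 \setminus \bigcup_i \overline{X_i}$ is non-empty and extracting from a component of $U$ a periodic centre annulus distinct from every $X_i$, violating \cref{thm:finiteannuli}.

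The first step is to classify the boundary circles of the $X_i$. I would argue that every such circle is essential in $\bbT^2$ with homology class parallel to the $\lambda_1$-eigenspace $E_1$ of the linearisation $A$. Nullhomotopic candidates are excluded by a Poincar\'e--Hopf index argument: a closed $C^1$ integral curve of the non-singular line field $E^c$ bounding a disc would force an interior zero of $E^c$. For essential candidates, I would appeal to the fact (available from the paper's analysis of the non-hyperbolic case and from \cite{enco}) that the lifted centre line field $\tilde E^c$ on $\bbR^2$ is trapped in an arbitrarily narrow cone around $E_1$; integrating tangent vectors along an essential closed leaf then forces the resulting homology direction into the same cone, and since $E_1$ is the unique rational direction there, the homology class must be an integer multiple of the primitive vector along $E_1$. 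Thus the full collection $c_1,\dots,c_m$ of boundary circles consists of pairwise disjoint essential curves in a single common homotopy class, and cuts $\bbT^2$ into finitely many open annular regions $W_1,\dots,W_r$, each with $C^1$ boundary tangent to $E^c$.

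Next, I would show that $f$ permutes the $W_k$. Since $f$ permutes the $X_i$ and hence the $c_j$, the image $f(W_k)$ is a connected open subset of $\bbT^2 \setminus \bigcup_j c_j$; using that $f$ is a local diffeomorphism, a standard argument shows $f(W_k)$ is also closed in its containing component, so $f(W_k) = W_{\sigma(k)}$ for some permutation $\sigma$ of $\{1,\dots,r\}$. In particular each $W_k$ is periodic under an iterate of $f$.

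Finally, if $U$ were non-empty, some $W_k$ would lie entirely in $U$. This $W_k$ is an embedded open annulus with $C^1$ boundary tangent to $E^c$, periodic under $f$, and therefore satisfies every defining property of a periodic centre annulus except possibly minimality. A compactness/Zorn argument on the family of sub-candidates in $\overline{W_k}$ ordered by reverse inclusion would produce a minimal such sub-annulus, which is a genuine periodic centre annulus contained in $W_k \subseteq U$; this contradicts the hypothesis that $\{X_1,\dots,X_n\}$ exhausts the periodic centre annuli, and so $U$ must be empty. The main obstacle I anticipate is the first step, particularly the quantitative cone control of $E^c$ relative to $E_1$ that forces the $\lambda_1$-direction on boundary circles; this rests on the geometric analysis of the non-hyperbolic case carried out elsewhere in the paper.
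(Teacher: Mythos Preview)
Your first step misidentifies the slope of the boundary circles, and the justification you give is false in this setting. You claim the circles are parallel to $E_1$, the eigenspace for the eigenvalue $\lambda_1$ with $|\lambda_1|=1$, on the grounds that $\tilde{E}^c$ is trapped in an arbitrarily narrow cone around $E_1$. But in the presence of periodic centre annuli the centre direction is \emph{not} globally close to any linear direction: inside a Reeb component the centre curves turn through a half-circle of directions. Moreover, since $A$ is an integer matrix with an eigenvalue of modulus $1$, both eigenvalues are integers and both eigendirections are rational, so there is no ``unique rational direction'' to pick out. In fact the paper proves the opposite of your claim in \cref{lem:nonhyp_eig}: a length--versus--volume argument shows that a periodic centre annulus must have the slope of the eigenvalue $\lambda$ with $|\lambda|>1$; if it had the $|\lambda|=1$ slope, a lifted invariant strip would have only polynomial volume growth, contradicting the exponential growth of unstable curves.

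This error propagates to your second step. From ``$f$ permutes the $X_i$'' you deduce that $f$ permutes the $c_j$ as a forward map, but then assert $f(W_k)\subset\bbT^2\setminus\bigcup_j c_j$, which needs the stronger fact $f^{-1}(\bigcup_j c_j)\subseteq\bigcup_j c_j$. Since $f$ is a degree-$|\lambda_2|$ covering map and not a diffeomorphism, forward invariance of $\bigcup_j c_j$ does not give backward invariance; a priori $f^{-1}(c_j)$ contains new centre circles inside some $W_k$, and your permutation argument collapses. The paper resolves exactly this by combining the correct slope from \cref{lem:nonhyp_eig} with \cref{lem:circles}: because the annuli have the $|\lambda|>1$ slope, the ``other'' eigenvalue in the preimage count is $1$, so $f^{-1}$ of a boundary circle is a single circle and $f^{-1}$ of a periodic centre annulus is a single annulus, forcing the backward orbit to stay within the finite family $X_1,\dots,X_n$; then \cref{lem:dense} finishes. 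Your overall architecture (cut along boundary circles, find a periodic complementary piece) is reasonable, but it cannot be completed without the length--versus--volume step you are missing.
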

	
	It then remains to characterise the dynamics when the linearisation is expanding, where the annuli the collection of closed periodic centre annuli will not cover $\bbT^2$. This is our final result, completing a classification of partially hyperbolic surface endomorphisms.
	\begin{theorem}\label{thm:expandclassi}
		Let $f:\bbT^2\to\bbT^2$ be a partially hyperbolic endomorphism which admits a periodic centre annulus, and let $X_1,...,\,X_n$ be the collection of all disjoint periodic centre annuli of $f$. Let $A:\bbT^2\to\bbT^2$ be the linearisation of $f$ and suppose $A$ is an expanding linear map, so that there is the Franks semiconjugacy $H:\bbT^2\to\bbT^2$ from $f$ to $A$. Then:
		\begin{itemize}
			\item The set
			\[
			\Lambda = \bigcup_i \bigcup_{k\geq 0}f^{-k}(X_i)
			\]
			is dense in $\bbT^2.$ If $X$ is a connected component in this set, then $X$ is an annulus, and $H(X)$ is either a periodic or preperiodic circle under $A$.
			\item The complement
			\[
			\bbT^2\setminus\, \Lambda
			\]
			is the union of disjoint circles tangent to the center direction of $f$. If $S$ is a connected component of $\Lambda$, then $S$ is a circle, and is either periodic/preperiodic under $f$, or $H(S)$ is a circle which is transitive under $A$.
		\end{itemize}
	\end{theorem}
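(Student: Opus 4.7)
The plan is to use the Franks semiconjugacy $H$ to transfer the question of density of $\Lambda$ to the expanding linear map $A$, where preimages of circles under an expanding integer matrix are fully understood, and then to analyse the complement $\bbT^2 \setminus \Lambda$ directly using the centre branching foliation together with \cref{thm:finiteannuli}.

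For the first claim I would begin by showing that every connected component of $\Lambda$ is an annulus. Since $f$ is a local diffeomorphism of a compact manifold it is a covering map, so connected components of $f^{-k}(X_i)$ correspond to subgroups of $\pi_1(X_i) \cong \bbZ$; because the boundary circles of $X_i$ are essential in $\bbT^2$, no disk preimage can arise and every component is an annulus. Next I would push these annuli through $H$: the boundary of $X_i$ is a $C^1$ essential circle tangent to $E^c$, and in the universal cover it lifts to a curve at bounded distance from a line of rational slope; since the lift $\tilde H$ is at bounded distance from the identity, $H$ sends each such boundary circle to an essential circle on $\bbT^2$ tangent to the centre eigendirection of $A$. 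By \cref{thm:finiteannuli}, $f$ acts on $\{X_1,\dots,X_n\}$ as a permutation up to a preperiodic tail, so each $H(X_i)$ is (pre)periodic under $A$, and likewise $H$ of each component of $f^{-k}(X_i)$ is a (pre)periodic $A$-circle. Density of $\Lambda$ then follows because the preimages under an expanding integer matrix of any periodic circle are dense in $\bbT^2$, and this density pulls back through the surjective bounded-displacement $H$.

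For the second claim, let $S$ be a connected component of $\bbT^2 \setminus \Lambda$. The complement is closed, $f$-forward invariant, and saturated by the centre branching foliation. I would first show that $S$ is a single centre circle: any open centre-saturated region inside $\bbT^2 \setminus \Lambda$ would, under forward iteration, either meet $\Lambda$ (contradicting forward invariance of the complement) or saturate into an $f$-invariant annular region, producing a new periodic centre annulus disjoint from $\{X_1,\dots,X_n\}$ and contradicting \cref{thm:finiteannuli}. With $S$ reduced to a single centre circle, the rational-slope argument above shows that $H(S)$ is a circle on $\bbT^2$ in the centre direction of $A$. Either the $A$-orbit of $H(S)$ is finite, in which case $S$ is periodic or preperiodic under $f$ via the semiconjugacy, or it is infinite, in which case the appropriate iterate of $A$ restricted to the orbit closure gives the transitive alternative claimed in the theorem.

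The principal obstacle is the structure result for the complement, namely that each component of $\bbT^2 \setminus \Lambda$ is a single centre circle rather than a more exotic centre-saturated set (for example a Cantor-foliated region of centre leaves). This step genuinely uses the minimality clause in the definition of a periodic centre annulus together with the finiteness given by \cref{thm:finiteannuli}: any persistent gap of non-$\Lambda$ centre leaves must, under forward iteration, either hit $\Lambda$ or coalesce into a new minimal annulus. Once this reduction is in place, the $H$-transfer finishes both the density statement and the dichotomy between periodic and transitive components.
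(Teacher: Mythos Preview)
Your overall architecture differs from the paper's, and the density step contains a real gap. You argue that $H(X_i)$ is a (pre)periodic circle $C_i$, that $\bigcup_k A^{-k}(C_i)$ is dense because $A$ is expanding, and that ``this density pulls back through the surjective bounded-displacement $H$.'' But density does not pull back through a continuous surjection, even one at bounded distance from the identity. Concretely, from $H\circ f = A\circ H$ you only get $H(f^{-k}(X_i)) \subseteq A^{-k}(H(X_i))$; to go the other way you would need something like $H^{-1}(H(X_i)) = X_i$, i.e.\ injectivity of $H$ transverse to the centre direction, which you have not established and which is essentially as hard as the theorem itself. A related omission: the statement asserts that $H(X)$ is a \emph{single} circle, i.e.\ that $H$ collapses the whole annulus, not just that $H$ sends each boundary circle to a circle. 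This requires the observation that points in a fixed periodic centre annulus stay a bounded distance apart under forward iteration on the universal cover, which forces their $H$-images to coincide up to the centre eigendirection.

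The paper proceeds in the opposite order and thereby avoids the transfer problem entirely. It first proves the structural \cref{lem:dense}: the complement of $\bigcup_i\bigcup_{k} f^{-k}(X_i)$ is a union of centre circles. The key input there, coming from \cref{sec:inside}, is that any non-circle leaf of a rational-slope branching foliation on $\bbT^2$ lies in a Reeb component or a tannulus, and those are eventually periodic centre annuli; hence outside $\Lambda$ every leaf is a circle. Once the complement has empty interior, density of $\Lambda$ is immediate, and $H$ is used only afterwards (and only in the easy direction) to identify $H(X)$ and $H(S)$ as (pre)periodic or transitive circles under $A$. Your sketch for the complement is in the right spirit (forward iterates of a thick region must eventually be periodic), but it is exactly this ``non-circle leaves live in tannuli/Reeb components'' fact that rules out the Cantor-type centre-saturated sets you flag as the principal obstacle, and you should invoke it explicitly rather than relying on \cref{thm:finiteannuli} alone.
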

	This result can be phrased as saying that either a point eventually lies in a periodic centre annulus, where we understand the dynamics as a skew product due to our earlier discussion, or it lies on an exceptional set of center circles
    where the dynamics can be understood using the semiconjugacy.
	
	The current paper is structured as follows. We begin in \cref{sec:inside} by studying how periodic centre annuli manifest within centre branching foliations. In particular, a Reeb component or what we call a tannulus give rise to periodic centre annuli. This characterisation is fundamental to the rest of the paper, and in this process we prove \cref{thm:finiteannuli}.
	
	We next establish \cref{thm:linearisation} in \cref{sec:para}. This is done by introducing rays associated to branching foliations as a tool for relating the dynamics of $f$ to its linearisation $A$, an idea that is used only within that section.
	
	\cref{sec:coherent} contains the proof of \cref{thm:classi}. We use topological properties of the branching foliation that were established in the preceding sections, paired with the Poincar\'e-Bendixson Theorem, to establish dynamical coherence. A leaf conjugacy is then constructed with an averaging technique, similar to \cite{enco}. 
	
	We conclude the paper by proving both Theorems \ref{thm:nonhypclassi} and \ref{thm:expandclassi} in \cref{sec:orbits}.

	\section{Tannuli and Reeb components}\label{sec:inside}
	In this section, we characterise how periodic centre annuli can arise in branching foliations. These are fundamental to proving the main theorems of the paper, and we shall prove \cref{thm:finiteannuli} in the process.
	
	Let $f:\bbT^2\to\bbT^2$ be a partially hyperbolic surface endomorphisms.  Lift $f$ to a diffeomorphism $\tilde{f}:\bbR^2\to\bbR^2$, recalling that $\tilde{f}$ admits an invariant splitting $E^u \oplus E^c$ \cite{manepughendo}. For an unstable segment $J^u\subset \bbR^2$, define $U_1(J) = \{p\in\bbR^2: \mathrm{dist}(p,J)<1\}$. The following result, which is the basis of `length vs.~ volume' arguments, is proved for the endomorphism setting in Section 2 of \cite{enco}.
	\begin{prop}\label{lengthvolume}
		There is $K>0$ such that if $J \subset \bbR^2$ is either an unstable segment or a centre segment, then
		\[
		\mathrm{volume}(U_1(J))> K \, \mathrm{length}(J).
		\] 
	\end{prop}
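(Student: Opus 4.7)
The plan is to establish the bound via a covering argument driven by the uniform transversality and continuity of the invariant line fields. Both $E^u$ and the line field $E^c$ tangent to the centre direction descend from $\bbT^2$, hence are uniformly continuous in $\bbR^2$, and by partial hyperbolicity the angle between $E^u$ and $E^c$ is bounded away from zero. Consequently there is $\delta_0\in(0,\tfrac{1}{10})$ such that inside any Euclidean ball of radius $2\delta_0$ both line fields are nearly constant (deviating by at most, say, $\pi/12$ from their values at the centre of the ball), and any unstable or centre segment contained in such a ball is a graph with small slope over the appropriate constant direction.

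Given a segment $J$ of length $L$, parametrise by arclength and partition it into consecutive sub-arcs $J_1,\dots,J_N$ of length approximately $\delta_0$, so that $N\asymp L/\delta_0$. Around each $J_i$ I would build a thin ``flow-box'' $V_i\subset U_1(J)$ as follows: let $p_i$ be the midpoint of $J_i$, and let $v_i$ be a unit vector in the direction transverse to $J$ at $p_i$ (that is, along $E^c$ if $J$ is unstable, or along $E^u$ if $J$ is centre). Take $V_i$ to be the parallelogram swept by translating a short sub-arc of $J_i$ of length $\delta_0/2$ by $sv_i$ for $s\in[-\delta_0,\delta_0]$. Because $J$ has small slope inside the local chart containing $J_i$, the Jacobian of this sweep is bounded below, so $\mathrm{volume}(V_i)\geq c\delta_0^2$ for a uniform $c>0$, and $V_i\subset U_1(J_i)\subset U_1(J)$ since $\delta_0<1$.

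The bound then follows from a uniform multiplicity estimate: every point $q\in\bbR^2$ lies in at most $M$ of the boxes $V_i$, for some constant $M$ depending only on $\delta_0$ and the transversality angle. This is the main obstacle. To see it, fix $q$ and consider the local chart of radius $2\delta_0$ around $q$; any $V_i$ containing $q$ must have its sub-arc of $J_i$ pass through a definite perpendicular transversal of length $O(\delta_0)$ through $q$, and by the local graph property such a transversal can meet $J$ only boundedly many times (each crossing must be followed by exiting the $2\delta_0$-ball before the next, because $J$ has bounded slope and there is a uniform lower bound on how close two consecutive sheets of $J$ in a chart can pass). Once the multiplicity bound $M$ is in hand, we conclude
\[
\mathrm{volume}(U_1(J))\;\geq\;\mathrm{volume}\Bigl(\bigcup_i V_i\Bigr)\;\geq\;\frac{1}{M}\sum_{i=1}^N \mathrm{volume}(V_i)\;\geq\;\frac{cN\delta_0^2}{M}\;\geq\;K\,\mathrm{length}(J),
\]
for $K=c\delta_0/(2M)$, which proves the proposition. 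The same argument applies verbatim when $J$ is a centre segment, with the roles of $E^u$ and $E^c$ interchanged in the construction of the transverse direction $v_i$.
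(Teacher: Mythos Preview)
The paper does not prove this proposition; it states it and cites Section~2 of \cite{enco}. So there is no in-paper argument to compare against.

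Your covering scheme is the right \emph{shape} for a length--versus--volume estimate: chop $J$ into short sub-arcs, thicken each into a box of definite area inside $U_1(J)$, and control the overlap. The difficulty is that the entire content of the proposition sits inside the multiplicity bound, and your justification of that bound does not go through. You write that ``there is a uniform lower bound on how close two consecutive sheets of $J$ in a chart can pass,'' but nothing you have set up --- uniform continuity of $E^u$ and $E^c$, uniform transversality, the local graph property --- implies this. The graph property tells you that each connected component of $J$ inside a $2\delta_0$-ball is a single short arc and that $J$ must exit the ball between sheets; it says nothing about how far apart distinct sheets must be. A curve tangent to a merely continuous line field can, a priori, leave a small ball and re-enter arbitrarily close to where it left, producing arbitrarily many sheets with no lower bound on their spacing. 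In fact, your multiplicity claim is equivalent to the bound $\mathrm{length}(J\cap B)\le C$ for unit balls $B$, and that bound is essentially the proposition itself (the passage from it to the volume inequality is just the covering step you wrote). So as it stands, the argument reduces the proposition to an unproved statement of equal strength.

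What is missing is a \emph{global} input beyond local continuity. One route that works here uses the unstable foliation $\mathcal{F}^u$ on $\bbR^2$, which exists by unique integrability of $E^u$. In a foliation chart for $\mathcal{F}^u$, an unstable segment lies on a single plaque and so meets the chart in at most one arc. For a centre segment, observe that any curve transverse to $\mathcal{F}^u$ meets each leaf of $\mathcal{F}^u$ at most once on $\bbR^2$: a second intersection, together with the leaf arc between them, would trap unstable leaves and force a closed leaf of $\mathcal{F}^u$ by a Poincar\'e--Bendixson argument (compare Proposition~\ref{uniquehit} later in the paper). Hence a centre segment also meets each foliation chart in at most one arc. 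Covering a unit ball by boundedly many such charts then yields the length-in-ball bound your multiplicity estimate needs. Without some ingredient of this sort, the proof is incomplete.
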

	Recall that an essential circle $\mathcal{C}\subset\bbT^2$ has a slope.
    This can be defined in terms of the first homology group of $\bbT^2$
	\begin{lem}
		Let $X_1, X_2 \subset \bbT^2$ be periodic centre annuli of $f$. Then the boundary circles of $X_1$ and $X_2$ have the same slope. Moreover, these circles do not topologically cross.
	\end{lem}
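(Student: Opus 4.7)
My approach splits the statement into its two assertions---shared slope and no topological crossing---and attacks each separately using the linearisation and the minimality of periodic centre annuli respectively.

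For the slope claim, let $c$ be any boundary circle of a periodic centre annulus $X$, and pick $K$ a common multiple of the periods so that $f^K$ fixes every boundary circle of $X_1$ and $X_2$ setwise (doubling as needed to absorb any swap of the two components of $\partial X_i$). Lift $c$ to $\tilde c \subset \bbR^2$, invariant under translation by its slope vector $v \in \bbZ^2 \setminus \{0\}$; by compactness (or \cref{lengthvolume}), $c$ has finite length, so $\tilde c$ lies within some strip of uniform $v^\perp$-width $R_0$ of the line $\bbR v$, and the same holds for every deck translate of $\tilde c$. Since $\tilde f^K(\tilde c) = \tilde c + w$ for some $w \in \bbZ^2$, the standard identity $\tilde f^K(x+v) - \tilde f^K(x) = A^K v$ (valid since $A$ is the linearisation) places both $\tilde f^K(\tilde p)$ and $\tilde f^K(\tilde p) + A^K v$ on the same translate $\tilde c + w$ for any $\tilde p \in \tilde c$. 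Hence the $v^\perp$-component of $A^K v$ is bounded by $2R_0$ uniformly in $K$. Letting $K$ range over arbitrarily large multiples of the period and using $|\lambda_2| > 1$, this forces $v$ to lie in the weak eigenline of $A$ (or the unique $A$-invariant eigenline in the defective case $|\lambda_1|=|\lambda_2|$). Since this direction is intrinsic to $A$, the slope vectors of all boundary circles in question are parallel, giving the common slope.

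For the non-crossing claim, I would argue by contradiction. Suppose $c_1 \subset \partial X_1$ and $c_2 \subset \partial X_2$ cross topologically at a point $p$. Being $C^1$ centre curves, they share a tangent line at $p$, so the crossing is tangential. If $X_1 = X_2$ then $c_1, c_2$ are either the same circle or the two disjoint boundary components of $X_1$, in neither case crossing; so assume $X_1 \neq X_2$. Lifting to $\bbR^2$, the common slope $v$ makes the crossings propagate $v$-periodically along chosen lifts $\tilde c_1, \tilde c_2$, and consecutive crossings enclose a bigon. Projecting these bigons to $\bbT^2$ and passing to a sufficiently high iterate of $f$, I would extract an open annular region $Y \subsetneq X_1$ whose boundary is built from arcs of $c_1$ and $c_2$; the tangentiality of the crossings makes $\partial Y$ automatically $C^1$ and tangent to $E^c$ at each transition, while a high iterate renders $Y$ periodic. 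This contradicts the minimality clause in the definition of a periodic centre annulus.

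The principal obstacle is making the non-crossing step rigorous: one must verify that the extracted region $Y$ is genuinely a periodic centre annulus strictly smaller than $X_1$ (or $X_2$), which requires careful local analysis at the tangential crossings, including checking $E^c$-tangency along the glued boundary and bookkeeping for the correct iterate under which $Y$ is invariant. The slope argument, by contrast, reduces to a clean computation using the linearisation.
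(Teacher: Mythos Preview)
Your slope argument has a genuine gap. The computation correctly shows that the $v^\perp$-component of $A^K v$ stays bounded as $K \to \infty$, but this does \emph{not} force $v$ into the weak eigenline. If $v$ is an eigenvector for the larger eigenvalue $\lambda_2$, then $A^K v = \lambda_2^K v$ is parallel to $v$ and has zero $v^\perp$-component, so your bound is trivially satisfied. Thus when $A$ has two distinct real eigenvalues the argument only shows $v$ lies in \emph{some} eigenline, leaving open the possibility that $\partial X_1$ has slope along one eigenline and $\partial X_2$ along the other. Worse, if $A$ is a homothety every direction is an eigendirection and your condition is vacuous, so the argument yields nothing at all; your parenthetical about the ``defective case'' handles only the Jordan block, not the homothety. (Incidentally, \cref{lem:nonhyp_eig} later shows the slope is the \emph{strong} direction when $A$ is non-hyperbolic, so your stated conclusion is not just unproved but false.) The linearisation alone does not pin down a unique slope; a genuinely dynamical argument is still required.

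The paper avoids the linearisation entirely and handles both assertions uniformly via length-versus-volume (\cref{lengthvolume}). For the slope claim: if $X_1$ and $X_2$ had different slopes one can choose $\tilde f$-invariant lifts whose intersection sits in a bounded parallelogram; this region is invariant yet contains an unstable segment growing exponentially, contradicting the volume bound. For non-crossing: a topological crossing between boundary lifts of the same slope forces a bigon whose area is uniformly bounded (vertically by one period of the deck translation, horizontally by the common strip width); since $\tilde f$ carries bigons to bigons, the forward images all have uniformly bounded volume, and an unstable curve inside gives the same contradiction.

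On your non-crossing idea via minimality: beyond the bookkeeping you flag, there is a structural obstruction. The region cut out by two arcs meeting at tangential crossings is a bigon, hence a disc, not an annulus; to produce an annulus $Y \subsetneq X_1$ you would have to assemble several such pieces or argue directly about a component of $X_1 \cap X_2$, and then verify that this component is essential, contained in $X_1$, and genuinely periodic rather than merely pre-periodic. This is substantially more delicate than your sketch suggests, whereas the length-versus-volume route settles it in a few lines.
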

	\begin{proof}
		Let $X_1$ and $X_2$ be two periodic centre annuli. By replacing $f$ with an iterate, we may assume that $X_1$ and $X_2$ are $f$-invariant. 
		
		First suppose that $X_1$ and $X_2$ have different slopes.
        Let $\pi : \bbR^2 \to \bbT^2$ be the universal covering map.
        Since the slopes are different, we can find a lift $f$
        to a map $\tilde f : \bbR^2 \to \bbR^2$ and connected components
        $\tilde{X_1}$ of $\pi \inv (X_1)$ and
        $\tilde{X_2}$ of $\pi \inv (X_2)$ 
        such that $\tilde f(\tilde{X_1}) = \tilde{X_1}$,
        and $\tilde f(\tilde{X_2}) = \tilde{X_2}$.
        Then $\tilde{X}_1$ and $\tilde{X}_2$ each lie a bounded distance from lines $L_1, L_2\subset \bbR^2$ with different slopes. Then $\tilde{X}_1\cap \tilde{X}_2$ lies inside finite parallelogram bounded by edges parallel to $L_1$ and $L_2$. Thus $\tilde{X}_1\cap \tilde{X}_2$ has bounded volume. But this region is invariant, and so a small unstable curve inside this region grows exponentially in length. By \cref{lengthvolume}, this is a contradiction.
        
        Now assume that the boundary circles of the annuli have the
        same slope and that a boundary circle of $X_1$ topologically
        crosses a boundary circle of $X_2$.
        Let $\tilde{f}$ be a lift of $f$
        and $\tilde{X_1}$ be a connected component of $\pi \inv(X_1)$
        such that $\tilde{f}(\tilde{X_1}) = \tilde{X_1}$.
        By lifting a point $x \in X_1 \cap X_2$, we can find a point
        $\tilde{x} \in \tilde{X_1}$ whose orbit under $\tilde{f}$
        lies in $\tilde{X_1} \cap \pi \inv (X_2)$ for all time.
        Since only finitely many connected components of 
        $\pi \inv (X_2)$ intersect $\tilde{X_1}$
        this means that there is a connected component
        $\tilde{X_2}$ of $\pi \inv (X_2)$ 
        which is periodic under $\tilde{f}$.
        Replacing $f$ by an iterate,
        we may assume that
        $\tilde f(\tilde{X_2}) = \tilde{X_2}$.

        Since the annuli $X_1$ and $X_2$
        are of the same slope, the $\tilde{f}$-invariant set $\tilde{X_1}\cap
        \tilde{X}_2$ lies within a bounded neighbourhood of some line $L$. By
        making a linear coordinate change on $\bbR^2$, we may assume that $L$
        is vertical, and that the lifts $\tilde{X}_1$ and $\tilde{X}_2$ are
        both invariant under vertical translation. Let $\mathcal{L}_1$ and
        $\mathcal{L}_2$ be lifts of boundary circles of $\tilde{X}_1$ and
        $\tilde{X}_2$, respectively, that cross. Denote the connected
        components of $\bbR^2\setminus\mathcal{L}_1$ as  $U^+$ and $U^-$. The
        existence of a topological crossing means there is a compact set $S$
        that is either a point or interval with $S \subset
        \mathcal{L}_1\cap\mathcal{L}_2$, such that $\mathcal{L}_2$ passes from
        $U^-$ about a neighbourhood of one endpoint of $S$, and then enters
        $U^+$ about a neighbourhood of the other endpoints of $S$. Since both
        $\mathcal{L}_1$ and $\mathcal{L}_2$ are invariant under vertical
        translation, then $\mathcal{L}_2$ also crosses from $U^-$ to $U^+$
        locally at $S+(1,0)$. By a connectedness argument, $\mathcal{L}_2$
        must then have crossed back from $U^+$ to $U^-$ at some point $q$ on
        $\mathcal{L}_1$ between $S$ and $S+(1,0)$. Since both $\mathcal{L}_1$
        and $\mathcal{L}_2$ are $C^1$, then there must be a disc bounded by a
        bigon consisting of a segment along one of each of $\mathcal{L}_1$ and
        $\mathcal{L}_2$. Both edges of the bigon lie somewhere along the lines
        between the crossings $S$ and $S+(0,1)$. We can also see that such a
        bigon whose vertices are at crossings of $\mathcal{L}_1$ and
        $\mathcal{L}_2$ must have uniformly bounded volume: It must be bounded
        by $1$ in the vertical direction, and since it lies a uniform distance
        from a vertical line $L$, it is also uniformly bounded in the
        horizontal direction. Since $\tilde{f}$ is a diffeomorphism, then the
        open disc $V$ is mapped again to a disc bounded by another one of
        these particular bigons. Thus $\tilde{f}^n(V)$ must have uniformly
        bounded volume for all $n$. By considering a small unstable curve
        $J^u\subset V$, then \cref{lengthvolume} gives a contradiction.
	\end{proof}
    Let $X \subset \bbT^2$ be an open annulus. We define the \emph{width} of $X$ be the Hausdorff distance between its boundary circles. The preceding lemma allows us to prove \cref{thm:finiteannuli}.
	\begin{proof}[Proof of \cref{thm:finiteannuli}]
		We use what is basically the idea of local product structure, but instead in the context of an unstable cone. Since $\Cone^u$ and $E^c$ are a bounded angle apart, then there is $\delta>0$ such that if a periodic centre annulus $X$ contains an unstable curve of length $1$, then $X$ must have width at least $\delta$. Since $X$ is periodic, then by iterating a small unstable curve within $X$, we see that $X$ must contain an unstable curve of length $1$. Thus any periodic centre annulus has width at least $\delta$. But by the preceding lemma, all periodic centre annuli are disjoint, so by compactness there can only be finitely many.
	\end{proof}
	Periodic centre annuli will be easier to understand if we can consider them as a part of branching foliation.
	\begin{lem}\label{lem:inbran}
		There exists an invariant centre branching foliation $\bran$ of $\bbT^2$ which contains the boundary circles of all periodic centre annuli.
	\end{lem}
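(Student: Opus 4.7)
The plan is to modify the \cite{BI} construction of a centre branching foliation so that the finitely many boundary circles of periodic centre annuli appear as leaves, rather than starting from an arbitrary branching foliation and trying to massage it into the desired form.

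By \cref{thm:finiteannuli} and the preceding lemma, there are finitely many pairwise disjoint $C^1$ boundary circles $C_1,\dots,C_m$ of periodic centre annuli, all tangent to $E^c$, and they form an $f^k$-invariant collection for some $k \geq 1$. Since the truth of the lemma for an iterate of $f$ gives it for $f$ (the branching foliation is already $f^k$-invariant and, being defined geometrically, will be $f$-invariant by uniqueness of the construction), we may assume each $C_i$ is $f$-invariant. These circles cut $\bbT^2$ into the closures of finitely many open regions $V_1,\dots,V_l$, and $f$ permutes the $V_i$.

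The \cite{BI} construction at a point $p$ assembles a canonical centre curve from the lowest forward centre curve $\mathcal{L}^+_p$ and the highest backward centre curve $\mathcal{L}^-_p$ through $p$. I will modify this by declaring, at each point $p$ on some $C_i$, that the canonical curve through $p$ is $C_i$ itself, and at each point $p$ in an open region $V_i$, by running the \cite{BI} construction with the additional restriction that the forward and backward centre curves through $p$ must remain in $\overline{V_i}$. Since $\overline{V_i}$ is closed and $\partial V_i \subset \bigcup_j C_j$ is itself tangent to $E^c$, short integral curves of $E^c$ starting at $p \in V_i$ can be chosen to stay in $\overline{V_i}$, giving a definite lower bound $\eta>0$ on the length of admissible forward curves; the infimum of admissible forward curves remains in $\overline{V_i}$ and is a $C^1$ centre curve by the same argument as in \cite{BI}.

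To finish, I verify the three properties of a branching foliation. Invariance: each $C_i$ is $f$-invariant by assumption, and $f$ permutes the $V_i$, so the restricted lowest/highest curves are permuted consistently. Covering: every $p \in \bbT^2$ lies on its canonical curve by construction. Non-crossing: two canonical curves in the same $\overline{V_i}$ do not topologically cross by the standard argument (a crossing would contradict the ``lowest'' or ``highest'' property past the crossing), while canonical curves lying in distinct closed regions are separated by some $C_j$ and cannot cross a $C_j$ because the admissibility condition keeps each such curve on one side. The main obstacle will be this last point: ruling out tangential crossings with the $C_j$. The key observation is that the admissibility constraint ``stay in $\overline{V_i}$'' is preserved under taking infima of forward curves (since $\overline{V_i}$ is closed), so the lowest admissible forward curve cannot dip across $C_j$, which is exactly what is needed.
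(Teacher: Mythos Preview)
Your approach is correct but genuinely different from the paper's. The paper begins with an arbitrary invariant branching foliation from \cite{BI} and then performs surgery: each leaf is cut at its crossings with a boundary circle $C_j$, the resulting segments are completed to full leaves by gluing on arcs of $C_j$, and the $C_j$ themselves are adjoined; since the boundary circles are finitely many and pairwise non-crossing (by the preceding lemma), this can be done one annulus at a time. You instead rerun the \cite{BI} lowest-forward/highest-backward construction region by region, constrained to each $\overline{V_i}$. Your route yields a more canonical object and makes non-crossing with the $C_j$ transparent (curves are confined to one side by construction), at the cost of revisiting the internals of the \cite{BI} argument; the paper's cut-and-glue is quicker and treats the existing branching foliation as a black box. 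One small point: your reduction to individually $f$-invariant circles via ``uniqueness of the construction'' is both unnecessary and somewhat circular --- the collection $\{C_j\}$ (and hence the collection of regions $\{V_i\}$) is already permuted by $f$, since the image of a periodic centre annulus is again one, and your restricted construction is manifestly equivariant under any local diffeomorphism preserving $E^c$, the orientations, and this collection of regions. So $f$-invariance holds directly without passing to an iterate.
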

	\begin{proof}
		Let $X\subset \bbT^2$ be a periodic centre annulus. Begin with any invariant centre branching foliation $\bran$ of $\bbT^2$.
        As mentioned in the introduction, the existence of such a branching
        foliation is given in Section 5 of \cite{BI}.
        Then we can construct $\bran$ by cutting a leaf at each point it crosses a boundary of circle of $X$ and completing these cut segments to leaves of $\bran$ by gluing the boundary circles of $X$ to each component. We also retain the leaves which do not cross the boundary circles of $X$, and obtain a branching foliation of $\bran$ which do not cross the boundary circles of $X$. We can then freely add these circles to $\bran$ to obtain a branching foliation which contains $X$. There are finitely many periodic centre annuli whose boundaries do not cross each other, and so we can repeat this process to add all periodic centre annuli to $\bran$.
	\end{proof}
	For the remainder of this section, we will take $\bran$ to be an invariant centre branching foliation contain all periodic centre annuli. Recall that a branching foliation also has an associated approximating foliation $\feps$ as in \cite{BI}. Then the slope of $\feps$ is necessarily rational, and so either the foliation is the suspension of some circle homeomorphism or contains a Reeb component. In the dynamically incoherent example of \cite{enco} and \cite{heshiwang}, the periodic centre annuli correspond to Reeb components in the approximating foliation. In the example of \cite{tan} and the dynamically coherent but not uniquely integrable example of \cite{heshiwang}, the centre curves form what we call an \emph{tannulus}, due to its similarities to a collection of translated graphs of the tangent function. Rigorously, a tannulus is a foliated closed annulus such that every leaf in the interior is homeomorphic to $\bbR$ and which when lifted to a foliation on a strip $I\times\bbR \subset\bbR^2$, is such that the leaves tend to one boundary component of $I\times \bbR$ with $y$-coordinate approaching $\infty$ when followed forwards, and then tending toward the other component with $y$-coordinate approaching $-\infty$ followed backwards. We say that an annulus in $\bran$ is a Reeb component or tannulus if its approximation in $\feps$ is such an annulus.
	
	Recall that if $A$ is the linearisation of $f$, then $\bran$ is almost parallel to an $A$-invariant linear foliation $\mathcal{A}$. Let $\pi:\bbR^2\to\bbR$ be a linear projection which maps each leaf of $\mathcal{A}$ onto $\bbR$.
	\begin{lem}
		The image and preimage of a Reeb component in $\bran$ under $f$ is a Reeb component. Similarly, the image and preimage of a tannulus in $\bran$ is a tannulus.
	\end{lem}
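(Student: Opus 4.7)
\emph{Proof plan.} The core point is that the Reeb/tannulus dichotomy is a topological invariant of the strip lift of the annulus together with its induced leaves of $\bran$, and that this invariant is transported by any diffeomorphism of $\bbR^2$ preserving $\bran$.

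First I would verify that $f(X)$ is an annulus in $\bran$, and that every connected component $Y$ of $f\inv(X)$ is also an annulus. Since the boundary circles $\mathcal{C}_1,\mathcal{C}_2$ of $X$ are $C^1$ leaves of $\bran$ and $\bran$ is $f$-invariant, the images $f(\mathcal{C}_1),f(\mathcal{C}_2)$ are again circle leaves of $\bran$. These image circles are freely homotopic in $\bbT^2$, being $f$-images of the homotopic boundary circles of $X$, so share a common slope and bound a compact embedded annulus $\overline{f(X)}$. The restriction $f\colon \bar X\to \overline{f(X)}$ is then a finite covering between compact annuli, and the same holds for $f\colon \bar Y\to \bar X$.

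Next I would pass to the universal cover to read off the type from the end behavior of interior leaves. Let $\tilde X\subset\bbR^2$ be a connected strip lift of $X$, with boundary lines $\tilde{\mathcal{C}}_1,\tilde{\mathcal{C}}_2$. Via the correspondence $h_\eps$, which is $\eps$-close to the identity and carries leaves of $\feps$ onto leaves of $\bran$, the annulus $X$ is a tannulus exactly when each interior leaf of $\bran$ in $\tilde X$ runs from $\tilde{\mathcal{C}}_1$ at one end to $\tilde{\mathcal{C}}_2$ at the other, and $X$ is a Reeb component exactly when both ends of each interior leaf accumulate on the same boundary line. Because $\tilde f$ is a diffeomorphism of $\bbR^2$ and $\bran$ is $\tilde f$-invariant, the image $\tilde f(\tilde X)$ is a strip with boundary lines $\tilde f(\tilde{\mathcal{C}}_1),\tilde f(\tilde{\mathcal{C}}_2)$ whose interior leaves are the $\tilde f$-images of the interior leaves of $\tilde X$. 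Since $\tilde f$ sends each boundary line to a boundary line and preserves the ends of each interior leaf, the asymptotic pattern transfers verbatim, so $f(X)$ is of the same type as $X$. The preimage direction is essentially identical: one lifts a component $Y$ of $f\inv(X)$ to a strip on which $\tilde f$ restricts to a diffeomorphism onto a strip lift of $X$, and the same transport of end behavior applies.

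The main technical obstacle I expect is the book-keeping between $\feps$ and $\bran$, since the paper's definition of Reeb component and tannulus is phrased at the level of the approximating foliation, whereas the argument above operates directly on $\bran$. This gap is bridged by the fact that $h_\eps$ is a continuous $\eps$-small perturbation of the identity which bijectively matches the leaves of $\feps$ with those of $\bran$; this matching is compatible with boundary lines and preserves end behavior of interior leaves, so the asymptotic description in either foliation is equivalent. Once that identification is set up, the rest is the topological transport of end behavior under the diffeomorphism $\tilde f$.
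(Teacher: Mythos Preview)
Your overall strategy—transporting the ``end pattern'' of interior leaves by the lifted diffeomorphism $\tilde f$—is reasonable, but the invariant you actually use does not distinguish Reeb components from tannuli. In a lifted Reeb strip \emph{and} in a lifted tannulus strip, each interior leaf has one end asymptotic to $\tilde{\mathcal C}_1$ and the other to $\tilde{\mathcal C}_2$; the difference lies in the \emph{direction along the strip}. In a Reeb component, both ends of an interior leaf escape to the same end of the strip (both with $y\to+\infty$, say), whereas in a tannulus they escape to opposite ends. Your criterion ``both ends accumulate on the same boundary line'' is therefore false for Reeb components, and as written the argument does not establish the lemma.

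The repair is straightforward: replace your invariant by ``which end of the strip each leaf-end escapes to.'' Since $\tilde f$ restricted to a strip lift is a proper homeomorphism onto the image strip, it either preserves or swaps the two ends, and in either case the Reeb/tannulus dichotomy transfers. With that correction your topological argument goes through.

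For comparison, the paper's proof is shorter and more analytic. It uses the linear projection $\pi:\bbR^2\to\bbR$ that maps each leaf of the $A$-invariant linear foliation $\mathcal A$ onto $\bbR$: a leaf $\mathcal L$ inside a Reeb component has $\pi(\mathcal L)$ equal to a half-line $[a,\infty)$, while a leaf inside a tannulus has $\pi(\mathcal L)=\bbR$. Since $f$ lies a bounded distance from the linear map $A$, the set $\pi(f(\mathcal L))$ differs from $\pi(A(\mathcal L))$ by a bounded amount, so a half-line stays a half-line and all of $\bbR$ stays all of $\bbR$. This bypasses strips, ends, and the bookkeeping with $h_\eps$ entirely.
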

	\begin{proof}
		Lift $\bran$ to $\bbR^2$. Reeb components and tannuli in $\bran$ cannot map to annuli foliated by circles, so it remains to show that a Reeb component cannot map to tannulus and vice versa. Observe that if $\mathcal{L}$ is a leaf of $\bran$ contained in a Reeb, then $\pi(\lf)$ is an interval of the form $[a,\infty)$ for some finite constant $a$. however, if $\mathcal{L}'$ is contained within a tannulus, then $\pi(\lf')$ is all of $\bbR$. Then $\pi A(\lf)$ is a half-bounded interval, and $\pi A(\lf')$ is all of $\bbR$. Since $f$ is a finite distance from $A$, we cannot have $f(\lf) = \lf'$ or $f(\lf') = \lf$.
	\end{proof}
	We now show that the presence of Reeb components and tannuli necessitate the existence of a periodic centre annulus.
	\begin{lem}\label{stripsareperiodic}
		If $X$ is a tannulus or Reeb component of a branching foliation $\bran$, then there is $k\geq 0$ such that $f^k(X)$ is periodic.
	\end{lem}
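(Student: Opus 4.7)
Argue by contradiction: assume that no $f^k(X)$ is periodic. By the preceding lemma, the forward orbit $\{X_n := f^n(X)\}_{n \geq 0}$ consists of pairwise distinct tannuli or Reeb components of $\bran$. Since tannuli and Reeb components are maximal substructures inside a foliation, distinct ones have pairwise disjoint interiors, so the $X_n$ are pairwise disjoint immersed open annuli whose boundaries are essential centre circles of the common (rational) slope of $\bran$.

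Projecting onto a circle $T$ transverse to this common slope sends each $X_n$ to an open arc $I_n \subset T$, and the disjointness of the $X_n$ translates into pairwise disjointness of the $I_n$. Consequently $\sum_n |I_n| \leq |T|$, so $|I_n| \to 0$ as $n \to \infty$.

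I would then derive an expansion estimate on $T$ to reach a contradiction. The linearisation $A$ acts on the direction transverse to $\bran$ as multiplication by an integer eigenvalue $\mu$; because $\bran$ is almost parallel to the $A$-invariant linear foliation along the centre eigendirection of $A$, the transverse eigenvalue satisfies $|\mu| > 1$ in each of the hyperbolic, expanding, and non-hyperbolic cases for $A$. Partial hyperbolicity yields a uniform expansion $\|Df v\| \geq \lambda \|v\|$ for $v \in \Cone^u$ and some $\lambda > 1$, and because $\Cone^u$ lies close to the transversal direction, this transfers to the induced map $\hat f : T \to T$: for $|I|$ small enough, $|\hat f(I)| \geq \lambda' |I|$ for some $\lambda' > 1$. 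Iterating $|I_{n+1}| = |\hat f(I_n)| \geq \lambda' |I_n|$ for $n$ large gives $|I_n| \geq (\lambda')^{n - N} |I_N|$ for all sufficiently large $n$, which contradicts $|I_n| \to 0$. The orbit $\{X_n\}$ must therefore repeat, so $f^k(X) = f^{k+p}(X)$ for some $k \geq 0$ and $p > 0$, making $f^k(X)$ periodic.

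The main obstacle will be making the expansion estimate $|\hat f(I)| \geq \lambda' |I|$ rigorous: one has to translate the pointwise unstable-cone expansion of $f$ into a uniform expansion of $\hat f$ on small arcs of $T$, using carefully that $\bran$ is close to $E^c$ and that $\Cone^u$ is close to the transversal direction. Once this step is secured, the contradiction follows directly from the combination $|I_n| \to 0$ versus exponential growth.
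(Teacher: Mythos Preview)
Your overall contradiction scheme---show the $X_n$ are pairwise disjoint, so their transverse widths are summable and hence tend to zero, then contradict this with some expansion---matches the paper's strategy. But the mechanism you propose for the expansion step has a real gap.

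You want a well-defined induced map $\hat f : T \to T$ and a uniform estimate $|\hat f(I)| \ge \lambda' |I|$ coming from the unstable cone. Neither is available in the form you need. First, $f$ does not preserve the slope-direction fibration, and the leaf space of a \emph{branching} foliation need not be Hausdorff, so there is no canonical $\hat f$ to speak of. Second, and more seriously, the implication ``$\Cone^u$ is expanded by $Df$, and $\Cone^u$ lies close to the transversal direction'' fails inside a tannulus: the interior centre leaves run from one boundary to the other, so at many points the centre direction is itself transverse to the slope, and $\Cone^u$ (being uniformly transverse to $E^c$) is then close to the \emph{slope} direction, not to the transversal. So you cannot transfer the cone expansion to the arcs $I_n$ in the way you describe. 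Your remark that the transverse $A$-eigenvalue satisfies $|\mu|>1$ is also unjustified at this point of the paper: we only know $\bran$ is close to \emph{some} $A$-invariant linear foliation, not yet to the one with smaller eigenvalue.

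The paper sidesteps all of this. For Reeb components it simply invokes the classical fact that a foliation of $\bbT^2$ has only finitely many Reeb components, so the forward orbit of $X$ lies in a finite set and is eventually periodic. For tannuli it gets the width lower bound directly: take an unstable segment $J^u \subset X$; then $f^k(J^u) \subset f^k(X)$ has length growing without bound, and since $\Cone^u$ and $E^c$ make a uniformly bounded angle, a sufficiently long unstable curve forces the containing tannulus to have width at least some fixed $\delta>0$. Combined with your disjointness observation this gives only finitely many $f^k(X)$ for large $k$. Replacing your $\hat f$-expansion step with this ``long unstable curve $\Rightarrow$ width $\ge \delta$'' argument makes your outline correct.
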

	\begin{proof}
		If $\bran$ contains a Reeb component, then since $\bran$ can contain only finitely many Reeb components, there is a periodic Reeb component. If $\bran$ contains a tannulus, denote this tannulus as $X$. If $J^u \subset X$ is an unstable curve, then $f^k(J^u)$ grows unbounded in length, implying that $f^k(X)$ must have width bounded below by some $\delta>0$ for all sufficiently large $k$. However, it is clear from their definition that all tannuli can intersect only on their boundary. Thus there can only be finitely many tannuli of width $\delta$, so that $f^k(X)$ is periodic for all sufficiently large $k$.
	\end{proof}
	\begin{cor}\label{nostrip}
		If there exists a centre branching Reeb component or tannulus, there exists a periodic centre annulus.
	\end{cor}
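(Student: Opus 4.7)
The corollary is essentially a repackaging of \cref{stripsareperiodic} to match the precise definition of a periodic centre annulus. The plan is as follows.

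Starting from a Reeb component or tannulus $X \subset \bran$, I would first apply \cref{stripsareperiodic} directly to produce an integer $k \geq 0$ such that $Y := f^k(X)$ is periodic, meaning $f^j(Y) = Y$ for some $j > 0$. The only remaining work is to check that $Y$ (or a subset of it) satisfies the rest of the definition of a periodic centre annulus, namely: that it is an immersed open annulus, that its boundary is $C^1$ and tangent to $E^c$, and that it is minimal with these properties.

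For the immersed open annulus property, I would use that $f$ is a local diffeomorphism, together with the image preservation established in the preceding lemma of the section, which says the $f$-image of a Reeb component is a Reeb component and similarly for a tannulus. Hence $Y$ is again a Reeb component or tannulus in $\bran$ (or more precisely, its image under $f^k$), and in particular an immersed open annulus. The boundary circles of $Y$ are images under $f^k$ of boundary leaves of $X$, which are themselves leaves of $\bran$; by $f$-invariance of $\bran$ these images are again leaves of $\bran$, so they are $C^1$ and tangent to $E^c$.

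The last point is minimality. If $Y$ itself is not minimal, then there is some smaller immersed open annulus $Y' \subsetneq Y$ with $C^1$ boundary tangent to $E^c$ and $f^{j'}(Y') = Y'$. I would argue that this descent must terminate: by the argument in the proof of \cref{thm:finiteannuli}, any such periodic annulus has width at least $\delta > 0$, so a strictly decreasing chain of such annuli ordered by inclusion cannot continue indefinitely. Taking a minimal element of this chain yields a genuine periodic centre annulus. The only mildly technical point is handling the case of a one-circle boundary versus a two-circle boundary when shrinking, but in both cases the bounded-below width argument rules out an infinite descending chain. None of these steps is a real obstacle, as the substantive content was already extracted in \cref{stripsareperiodic} and \cref{thm:finiteannuli}.
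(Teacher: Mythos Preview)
Your plan matches the paper, which states the corollary without proof and treats it as immediate from \cref{stripsareperiodic}; the checks you list for the open-annulus and $C^1$ center-boundary conditions are routine and correct.

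The minimality step, however, does not go through as written. From the proof of \cref{thm:finiteannuli} you correctly extract that any periodic annulus with center boundary has width at least some $\delta>0$, and then conclude that a strictly descending chain $Y_0 \supsetneq Y_1 \supsetneq \cdots$ must terminate. That inference is false: the widths could form a sequence such as $2\delta,\ \tfrac{3}{2}\delta,\ \tfrac{5}{4}\delta,\ldots$, all bounded below by $\delta$, with the annuli strictly nested and never stabilising. The finiteness in \cref{thm:finiteannuli} comes from the minimal annuli being pairwise \emph{disjoint}, which is precisely what fails along a nested chain. A workable repair is a Zorn--compactness argument rather than a descent: lift a chain of such sub-annuli to nested strips on $\bbR^2$, use the $\delta$-bound to guarantee the intersection has non-empty interior, and note that the limiting boundary curves are limits of center curves and hence again tangent to $E^c$; one must also arrange a common period for the limit, e.g.\ by first intersecting each candidate with its $f^j$-orbit where $j$ is the period of $Y$. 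Alternatively one can try to show directly that the interior of a periodic Reeb component or tannulus contains no center circle, making $Y$ itself minimal, but that needs its own argument. The paper glosses over all of this, so you are not below its standard of detail, but the specific reasoning you gave for termination is invalid.
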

	Since Reeb components and tannuli are the only components of a rational-slope branching foliation which could have non-circle leaves, we also have the following.
	\begin{cor}\label{lem:circles}
		If $f$ does not admit any periodic centre annuli and $A$ has rational eigenvalues, the leaves of any invariant centre branching foliation are circles.
	\end{cor}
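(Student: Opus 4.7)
The plan is to combine the classification of rational-slope foliations on $\bbT^2$ with \cref{nostrip}. Since $\bran$ is almost parallel to an $A$-invariant linear foliation $\mathcal{A}$ and $A$ has rational eigenvalues, the foliation $\mathcal{A}$ has rational slope, and hence so does the associated approximating foliation $\feps$. The standard classification of orientable foliations of $\bbT^2$ at rational slope says that such a foliation is either a suspension of a circle homeomorphism (all leaves are circles of the given slope) or it decomposes into a union of Reeb components together with circle leaves. In the branching setting one extra possibility arises, namely tannuli, but by the definitions introduced just above the statement, any leaf of $\feps$ which is not a circle must lie inside either a Reeb component or a tannulus of the approximating foliation. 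Pulling this back via the map $h_\eps$ relating $\feps$ and $\bran$, any non-circle leaf of $\bran$ must likewise be contained in a Reeb component or a tannulus of $\bran$.

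Next I would invoke \cref{nostrip}: if $\bran$ contained either a Reeb component or a tannulus, then $f$ would admit a periodic centre annulus, contradicting the hypothesis. Therefore $\bran$ contains neither Reeb components nor tannuli, and by the previous paragraph every leaf is a circle.

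The only step that is not purely formal is the appeal to the classification, specifically the claim that the non-circle leaves of a rational-slope branching foliation are confined to Reeb components or tannuli. I expect this to be the main obstacle, since one must check that the branching features (leaves merging) cannot produce a new kind of non-circle leaf outside these two structures. However, because $h_\eps$ carries leaves of $\feps$ onto leaves of $\bran$, this reduces to the corresponding statement for the honest foliation $\feps$, which follows from the classical description of rational-slope foliations together with the definition of a tannulus given earlier in this section. With this confinement in hand, the corollary follows immediately from \cref{nostrip}.
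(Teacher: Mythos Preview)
Your approach is essentially the same as the paper's, which does not give a formal proof but simply precedes the corollary with the sentence ``Since Reeb components and tannuli are the only components of a rational-slope branching foliation which could have non-circle leaves, we also have the following.'' You correctly identify this fact as the key input and then invoke \cref{nostrip}, exactly as intended.

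That said, your phrasing of the classification is inaccurate in two places. First, a suspension of a circle homeomorphism with rational rotation number need \emph{not} have all leaves circles; it merely has at least one circle leaf, with possibly non-circle leaves in between. Second, tannuli are not a feature peculiar to the branching setting: they already occur in honest rational-slope foliations of $\bbT^2$ (indeed, the paper defines a tannulus in $\bran$ precisely via its image in the honest approximating foliation $\feps$). The correct statement is simply that in any rational-slope foliation of $\bbT^2$, an annulus bounded by adjacent circle leaves and containing no interior circle leaf is either a Reeb component or a tannulus. Once you state it this way, your argument goes through verbatim and coincides with the paper's.
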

	The converse of this characterisation is also true---that a periodic centre annuli is necessarily a centre Reeb component or tannulus---though the ideas used to see this are precisely those we use to prove dynamical coherence in \cref{sec:coherent}. Since the converse is not needed for our results, we do not include a proof in this paper.
	
	\section{Degenerate linearisations}\label{sec:para}
	In this section, we prove \cref{thm:linearisation}. To relate the dynamics of the endomorphism to its linearisation, we introduce the notion of a ray associated to a branching foliation, which will only be used in this section.
	
    Let $f_0:\bbT^2\to\bbT^2$ be a partially hyperbolic endomorphism, and for the remainder of this section, suppose that $f_0$ does not admit a periodic centre annulus. Let $A$ be the linearisation of $f_0$. If $A$ has real eigenvalues, then by replacing $f_0$ by an iterate, we may assume that eigenvalues of $A$ are positive. Proving \cref{thm:linearisation} then amounts to ruling out the possibilities that $A$ has complex eigenvalues, is a homothety, or admits a non-trivial Jordan block, where in the latter two cases we may assume the eigenvalues of $A$ are positive.
	
	Lift $f_0$ to a diffeomorphism $f:\bbR^2\to\bbR^2$, and let $A$ also denote the lift of the linearisation. Further, lift the centre direction and unstable cone of $f_0$ to $\bbR^2$, and denote them $E^c$ and $\Cone^u$. Up to taking finite covers, we can assume that $E^c$ and $\Cone^u$ are orientable, and by replacing $f$ by $f^2$, that $Df$ preserves these orientations. Similarly, we assume that $A$ is an orientation preserving linear map.
	
	Recall that there exists an invariant centre branching foliation of $\bbR^2$ which descends to $\bbT^2$, as is constructed for surfaces in Section 5 of \cite{BI}. Moreover, given small $\eps>0$, the leaves of this branching foliation lie less than $\eps$ in $C^1$-distance from an approximating foliation $\feps$ which also descends to $\bbT^2$. This implies that $\feps$ lies a finite distance from a linear foliation of $\bbR^2$, and thus, so does $\bran$. Moreover, from \cref{lem:circles}, the absence of periodic centre annuli implies that $\feps$ descends to the suspension of a circle homeomorphism. This is already enough to rule out the case of complex eigenvalues.
	\begin{lem}\label{lem:complex}
		The linearisation $A$ of $f$ has real eigenvalues.
	\end{lem}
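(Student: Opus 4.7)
The plan is to argue by contradiction: assume $A$ has complex eigenvalues, so $A$ fixes no real line through the origin in $\bbR^2$. I would derive a contradiction by showing that the $f$-invariance of the centre branching foliation $\bran$ nevertheless forces $A$ to preserve some real line direction.

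First I would extract a distinguished asymptotic direction from $\bran$. The discussion just before the lemma records that the lift of $\bran$ to $\bbR^2$ lies a finite Hausdorff distance from a linear foliation of $\bbR^2$. Let $L \subset \bbR^2$ be the line through the origin spanning this linear foliation, so every leaf of $\bran$ is within some uniform Hausdorff distance of a translate of $L$. (If one wishes to invoke the suspension structure of $\feps$ mentioned in the preceding paragraph, $L$ can be taken to be the lift of the suspension direction; but only the bounded-distance property is needed.)

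Next is the invariance step. Since $f$ is at bounded distance from $A$ on $\bbR^2$, for a leaf $\mathcal{L}$ of $\bran$ approximated by a translate $L + v$, the image $A(\mathcal{L})$ is within bounded distance of the translate $A(L) + A(v)$, and simultaneously within bounded distance of $f(\mathcal{L})$, which is itself a leaf of $\bran$ and hence within bounded distance of some translate of $L$. Chaining these estimates, a translate of $A(L)$ lies within bounded Hausdorff distance of a translate of $L$. Since two lines in $\bbR^2$ at bounded Hausdorff distance must be parallel, and both $L$ and $A(L)$ pass through the origin, this forces $A(L) = L$. Hence $A$ has a real invariant line, giving a real eigenvalue; the other eigenvalue of a real $2 \times 2$ matrix must be its complex conjugate, so both are real, contradicting the assumption.

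The main technical care lies in ensuring that the boundedness constants are uniform across all leaves and survive pushforward by $A$: compactness of $\bbT^2$, the $C^0$-estimate between $\bran$ and $\feps$, and finiteness of the operator norm of $A$ together supply this. Conceptually no step is deep; the key observation is that parallelism is the only way for two lines to remain at bounded Hausdorff distance, and invariance of $\bran$ propagates this rigidity to the linearisation.
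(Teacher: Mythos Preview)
Your proposal is correct and follows essentially the same approach as the paper: both arguments extract the linear foliation approximating $\bran$, use the finite distance between $f$ and $A$ together with the $f$-invariance of $\bran$ to conclude that this linear foliation is $A$-invariant, and then observe that a linear map with complex eigenvalues has no invariant real line. Your write-up simply unpacks the bounded-distance chain in more detail than the paper's three-sentence proof.
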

	\begin{proof}
		An invariant centre branching foliation $\bran$ which descends to $\bbT^2$ is a finite distance from a linear foliation of $\bbR^2$. Since $f$ and $A$ are a finite distance apart, this linear foliation must be $A$-invariant. A linear map of $\bbR^2$ with complex eigenvalues preserves no such foliation.
	\end{proof}
	
	Now we begin to consider to the homothety and Jordan block cases.
	\begin{lem}
	The linearisation $A$ must have an eigenvalue greater than $1$.
    \end{lem}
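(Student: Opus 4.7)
The plan is to argue by contradiction, playing the exponential growth of unstable curves under $\tilde f$ against the polynomial growth of images of bounded sets when $A$ has all eigenvalues of magnitude at most one. First I would note that since $f_0$ is an orientation-preserving local diffeomorphism of $\bbT^2$, its degree $\det A$ is a positive integer, so $\det A \geq 1$. Assuming for contradiction that both eigenvalues $\lambda_1,\lambda_2$ of $A$ satisfy $|\lambda_i|\leq 1$, we obtain $1 \leq \det A = |\lambda_1 \lambda_2| \leq 1$, forcing $|\lambda_1|=|\lambda_2|=1$. Combined with the standing assumption that the eigenvalues are positive real (from passing to an iterate), this leaves only two possibilities: $A=\Id$, or $A$ is conjugate to the parabolic Jordan block $\begin{pmatrix}1&1\\0&1\end{pmatrix}$. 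In both cases $\|A^n\| = O(n)$.

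Next I would control the iterates of the lift $\tilde f$. Since $f_0$ is homotopic to $A$, one may lift $f_0$ so that $\tilde f(x) = Ax + \phi(x)$ where $\phi : \bbR^2 \to \bbR^2$ is $\bbZ^2$-periodic and hence bounded. A simple induction then yields
\[
\tilde f^n(x) = A^n x + \sum_{k=0}^{n-1} A^{n-1-k}\phi\bigl(\tilde f^k(x)\bigr),
\]
so that $\|\tilde f^n(x) - A^n x\| \leq \|\phi\|_\infty \sum_{k=0}^{n-1}\|A^{n-1-k}\|$ grows polynomially in $n$. Consequently, for any bounded $B \subset \bbR^2$, the image $\tilde f^n(B)$ sits inside a polynomially sized neighbourhood of the parallelogram $A^n(B)$. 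Since $\|A^n\|=O(n)$, the parallelogram $A^n(B)$ has perimeter $O(n)$, and the preceding neighbourhood (together with a further $1$-enlargement) has volume bounded above by a fixed polynomial in $n$.

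Finally, I would pick any short unstable segment $J^u$ inside some bounded set $B \subset \bbR^2$. Partial hyperbolicity yields $\length(\tilde f^n(J^u))\geq c\mu^n$ for some $\mu>1$. Applying \cref{lengthvolume} to $\tilde f^n(J^u)$ gives $\vol\bigl(U_1(\tilde f^n(J^u))\bigr) \geq K c \mu^n$; but $U_1(\tilde f^n(J^u))$ is contained in a $1$-enlargement of $\tilde f^n(B)$, whose volume is polynomial in $n$. Exponential growth cannot be dominated by polynomial growth, so the contradiction completes the proof. The main technical point is the polynomial control of $\|\tilde f^n - A^n\|$; once this is established, the length-versus-volume comparison handles both subcases uniformly via the single estimate $\|A^n\|=O(n)$.
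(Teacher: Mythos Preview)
Your argument is correct and follows essentially the same length-versus-volume strategy as the paper: assume no eigenvalue exceeds $1$, deduce that iterates of a bounded set have only polynomially growing volume because $f$ lies a bounded distance from $A$, and contradict this with the exponential growth of an unstable segment via \cref{lengthvolume}. Your write-up is more explicit than the paper's terse version---you spell out the reduction to $|\lambda_1|=|\lambda_2|=1$, the induction formula for $\tilde f^n-A^n$, and the containment $U_1(\tilde f^n(J^u))\subset U_1(\tilde f^n(B))$---and you iterate forward, whereas the paper (perhaps by typo) phrases the argument with $f^{-n}$; the underlying idea is the same.
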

    \begin{proof}
    Let $U\subset \bbR^2$ be a small neighbourhood open neighbourhood. If the $A$ has no eigenvalue greater than $1$, then the volume of $A^{-n}(U)$ is bounded. Since $f$ is a finite distance from $A$, the volume of $f^{-n}(U)$ can grow at most polynomially. But $U$ contains some small unstable curve $J^u$, and so by \cref{lengthvolume} implies that the volume of $f^{-n}(U)$ must grow exponentially, a contradiction.
    \end{proof}

	Using the orientation on $E^c$, we can talk about forward centre curves emanating from a point. Given a branching foliation $\bran$ and a leaf $\mathcal{L}\in\bran$ through the origin, we call the \emph{forward half of $\mathcal{L}$} to be all points on $\mathcal{L}$ forward of the origin. Recall that a ray is a subset of $\bbR^2$ given by $\{t\cdot v\in\bbR^2: t\geq 0\}$ for some non-zero vector $v\in \bbR^2$. 
	\begin{lem}\label{lem:ray}
		Let $\bran$ be an invariant branching foliation. Then there exists a unique ray $R(\bran)$ emanating from the origin such that if $\mathcal{L} \in \bran$ passes through the origin, the forward half of $\mathcal{L}$ lies a finite distance from $R(\bran)$.
	\end{lem}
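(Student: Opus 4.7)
The plan is to read off the ray $R(\bran)$ from the direction of the linear foliation $\mathcal{A}$ to which $\bran$ is at finite Hausdorff distance, oriented using $E^c$.

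Fix a unit vector $v \in \bbR^2$ spanning the direction of $\mathcal{A}$, and let $\pi_v, \pi_\perp$ denote orthogonal projection onto $\bbR v$ and $\bbR v^\perp$. Since $\bran$ descends to $\bbT^2$, there is a uniform constant $C > 0$ such that each leaf of $\bran$ lies within Hausdorff distance $C$ of some line of $\mathcal{A}$; hence a leaf $\mathcal{L} \in \bran$ through the origin lies within Hausdorff distance $2C$ of the line through $0$ parallel to $v$, so in particular $|\pi_\perp(\mathcal{L})| \leq 2C$.

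Let $e$ be the continuous unit vector field tangent to $E^c$ with the chosen orientation. The first key step is to show that $\langle e(p), v\rangle$ is bounded away from zero uniformly on $\bbR^2$. This I extract from the suspension structure of $\feps$: because $\feps$ has no Reeb components it is topologically a suspension of slope $v$, so its leaves are transverse to a smooth circle transverse to $v$, and by compactness of $\bbT^2$ the tangent $T\feps$ is uniformly bounded away from $v^\perp$. The $C^1$-closeness of $\bran$ and $\feps$ then transfers this bound to $E^c$. By connectedness of $\bbR^2$, the sign $\sigma := \sgn\langle e(\cdot), v\rangle$ is constant.

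Parametrizing the forward half of $\mathcal{L}$ by arc length as $\gamma : [0, \infty) \to \bbR^2$ with $\gamma(0) = 0$, we have $\gamma'(t) = e(\gamma(t))$, so $t \mapsto \sigma\pi_v(\gamma(t))$ is strictly increasing. Since $\gamma$ has infinite arc length while $\pi_\perp \circ \gamma$ is bounded, $\sigma\pi_v(\gamma(t)) \to +\infty$. Define $R(\bran) := \{t\sigma v : t \geq 0\}$. Then $\gamma([0, \infty))$ sits in the half-strip $\{p : \sigma\pi_v(p) \geq 0,\ |\pi_\perp(p)| \leq 2C\}$, and since $\sigma\pi_v \circ \gamma$ takes every value in $[0, \infty)$, the Hausdorff distance from $\gamma([0, \infty))$ to $R(\bran)$ is at most $2C$. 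Uniqueness is immediate: distinct rays from the origin are at infinite Hausdorff distance from each other, so any other candidate ray must coincide with $R(\bran)$.

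The main step I expect to be delicate is the uniform angle bound in the third paragraph; without this, different leaves through the origin could in principle have forward halves limiting to opposite ends of $\mathcal{A}$, and the ray would not be well defined.
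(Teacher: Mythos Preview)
Your argument has a genuine gap precisely at the step you flag as delicate: the claimed uniform lower bound on $|\langle e(p), v\rangle|$. The justification you give does not establish this, and in fact the bound need not hold.

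The suspension property of $\feps$ is a topological statement: each leaf meets a transversal circle $C_0$ exactly once per period. It does \emph{not} say that $T\feps$ is pointwise transverse to the fixed linear direction $v^\perp$. Transversality to $C_0$ only constrains $T\feps$ at points on $C_0$, and the tangent direction of $C_0$ is not constant (and need not be $v^\perp$) anyway. Concretely, an embedded circle on $\bbT^2$ in homotopy class $v$ can have tangent in direction $v^\perp$ at some points---its lift to $\bbR^2$ need not be a graph over the $v$-axis---and a foliation by such circles is still a perfectly good suspension. So neither $T\feps$ nor $E^c$ is forced away from $v^\perp$, the monotonicity of $\sigma\,\pi_v(\gamma(t))$ can fail, and without it you have no mechanism preventing different leaves through the origin from sending their forward halves to opposite ends of $\mathcal{A}$.

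The paper avoids this by never invoking a pointwise angle condition. It works directly with the approximating foliation $\feps$: the orientation on $T\feps$ inherited from $E^c$ (via transversality to $\Cone^u$) is globally consistent, and since $\feps$ descends to a suspension on $\bbT^2$, the forward ends of all $\feps$-leaves go to the \emph{same} end of the asymptotic line---this is a global statement about the suspension flow, not an infinitesimal one. That fixes the ray $R(\feps)$. One then transfers to $\bran$ by noting that the forward half of any $\bran$-leaf is $\eps$-close to a forward segment of an $\feps$-leaf, which (again by the suspension structure, applied to an arbitrary forward segment, not just one starting at the origin) lies a bounded distance from $R(\feps)$. Setting $R(\bran):=R(\feps)$ finishes the proof.
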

	\begin{proof}
		Let $\feps$ be an approximating foliation of $\bran$. By taking $T\feps$ to be transverse to $\Cone^u$, $T\feps$ has a natural orientation that is consistent with with that of $E^c$. Since $\feps$ descends to $\bbT^2$, it lies a finite distance from a linear foliation of $\bbR^2$. Thus the forward half of the leaf $\feps(p)$ lies close to some ray $R(\feps)$ emanating from the origin. Let $\mathcal{L}\in \feps$ and $J\subset \mathcal{L}$ be a subcurve obtained by taking all points forward of some point $q\in \mathcal{L}$. Since we know $\feps$ in fact descends to a suspension, then $J$ lies a finite distance from $R(\feps)$.
		
		Now the forward half of a leaf of $\bran$ is a finite distance from some forward segment $J$ contained in a leaf $\mathcal{L}\in \feps$ as described in the preceding paragraph. Thus any forward half a leaf of $\bran$ also lies forward half of $\mathcal{L}$ is a finite distance from $R(\bran):=R(\feps)$.
	\end{proof}

	Recall that there can be many invariant centre branching foliations, and by using the explicit constructions of \cite{BI}, we have two concrete examples as follows. Using the orientations on $\Cone^u$ and $E^c$, we may roughly view $\Cone^u$ locally as vertical and $E^c$ horizontal. One can obtain a branching foliation by taking the maximal highest forward centre curve through a point $q\in \bbR^2$ and gluing it to the lowest backward centre curve through $q$. We denote this branching foliation as $\fmax$. A second (though not necessarily distinct) branching foliation can be obtained by flipping this construction, and gluing the lowest forward to the highest backward curves. We denote this branching foliation $\fmin$.
	
	Consider now the space $S$ of rays. Since each ray may be assocated to a unit vector in $\bbR^2$, the space $S$ is homeomorphic to a circle. We specifically work with $S$ rather than projective space $\bbR P^1$ in order to handle the Jordan block case. Observe that a line on $\bbR^2$ quotients down to a pair of antipodal points on $S$, so that the complement of this line in $S$ has two connected components. 
	
	\begin{lem}\label{lem:split}
		There exists a line $\gamma \subset \bbR^2$ such that one connected component $S^+$ of $S\setminus \gamma$ contains $R(\bran)$ for any centre branching foliation. In particular, this component contains both $R(\fmax)$ and $R(\fmin)$. 
	\end{lem}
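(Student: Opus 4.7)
My plan is to exploit the fact that the oriented centre direction $E^c$, viewed as a continuous map $\bbR^2 \to S$, takes values in a single proper open arc of $S$ disjoint from $\Cone^u \cup (-\Cone^u)$. The tangent of a forward centre curve is then constrained to a proper convex cone in $\bbR^2$, which by convexity confines the curve itself to this cone, and hence confines the ray $R(\bran)$ to the closure of the corresponding arc. The desired $\gamma$ can then be taken to be any line through the origin with direction in the interior of $\Cone^u$.

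In more detail, at each point $p$ the unstable cone $\Cone^u(p)$ is a proper closed convex cone, so its set of unit vectors forms an arc of $S$ of angular width strictly less than $\pi$. Since partial hyperbolicity forces $E^c(p)$ to be transverse to $\Cone^u(p)$ (and thus also to $-\Cone^u(p)$), the oriented direction $E^c(p)$ corresponds to a point in the complement $S \setminus (\Cone^u \cup -\Cone^u)$. This complement splits into exactly two disjoint open arcs, which are antipodal to each other; by continuity of $E^c$ and connectedness of $\bbR^2$, the map $p \mapsto E^c(p)$ selects the same arc at every point, which I denote $S_0$. Writing $K \subset \bbR^2$ for the closed convex cone generated by the unit vectors in $\overline{S_0}$, $K$ is a proper convex cone of angular width less than $\pi$.

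Now fix an invariant centre branching foliation $\bran$, let $\mathcal{L} \in \bran$ pass through the origin, and parametrise its forward half as $\mathcal{L}^+ : [0,\infty) \to \bbR^2$ consistently with the orientation of $E^c$. Then $(\mathcal{L}^+)'(t) \in \overline{S_0}$ for every $t$, so
\[
\mathcal{L}^+(t) \;=\; \int_0^t (\mathcal{L}^+)'(s)\, ds \;\in\; K
\]
for every $t \ge 0$. Combined with the bounded-distance statement of \cref{lem:ray}, this forces the unit generating direction of $R(\bran)$ to lie in $\overline{S_0}$. To finish, pick any unit vector $u$ in the interior of the arc in $S$ determined by $\Cone^u$ and set $\gamma = \bbR u$; then $\gamma \cap S = \{u, -u\}$, and the two components of $S \setminus \gamma$ are open arcs from $u$ to $-u$, exactly one of which contains $\overline{S_0}$. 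Designating this component as $S^+$ yields the conclusion for every invariant centre branching foliation, and in particular for $\fmax$ and $\fmin$.

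The only step requiring any thought is the passage from the pointwise constraint $(\mathcal{L}^+)'(t) \in \overline{S_0}$ to the asymptotic constraint on $R(\bran)$. This is pure convexity once one has the finite-distance bound of \cref{lem:ray} and the observation that $S_0$ lies strictly inside an open half-circle of $S$; identifying $S_0$ from the orientation of $E^c$ and the width of $\Cone^u$ is the main conceptual input.
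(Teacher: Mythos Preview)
Your argument has a genuine gap in the first step: you treat the unstable cone family as though it were a single constant cone when you form ``the complement $S \setminus (\Cone^u \cup -\Cone^u)$'' and extract a fixed arc $S_0$. In reality $\Cone^u(p)$ varies with $p$, so the two complementary arcs also depend on $p$; the continuity--connectedness argument only tells you that $E^c(p)$ always lies in the ``same'' one of the two $p$-dependent arcs, not in one fixed arc of $S$. The integration step then collapses, because it needs a \emph{single} proper convex cone $K \subset \bbR^2$ containing $E^c(p)$ for every $p$.

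Worse, the pointwise statement you are implicitly using --- that the image of the oriented map $E^c:\bbT^2\to S$ lies in an open half-circle --- can fail even under the standing hypothesis of no periodic centre annuli. If $A$ is a linear partially hyperbolic endomorphism with constant centre direction and $\phi$ is a diffeomorphism of $\bbT^2$ supported in a small disc with $D\phi=-\Id$ at the centre of the disc (e.g.\ a compactly supported twist rotating by $\pi$ at the centre), then $f=\phi A\phi^{-1}$ is partially hyperbolic, has no periodic centre annuli, yet its oriented centre direction $D\phi(E^c_A)$ points in antipodal directions at the disc's centre and away from the disc. No line $\gamma$ chosen from ``the interior of $\Cone^u$'' will work via your route.

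The paper avoids pointwise control of $E^c$ entirely and argues at large scale. Because the approximating foliation is a suspension it admits a closed transversal $C_0$; the lift $C$ of $C_0$ through the origin lies a bounded distance from a genuine line $\gamma$. A Poincar\'e--Bendixson argument (applied to the approximating foliation) shows a forward centre curve through the origin cannot meet $C$ twice, so it is trapped in one of the two topological half-planes bounded by $C$, and the orientations on $E^c$ and $\Cone^u$ determine which one. That confines $R(\bran)$ to the corresponding component $S^+$ of $S\setminus\gamma$, with the suspension structure also ruling out the boundary rays. The essential input is the existence of a closed transversal with a well-defined slope, not any sector constraint on the tangent vectors of the leaves.
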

	\begin{proof}
	Since an invariant centre branching foliation is approximated by a suspension, there there exists a circle $C_0\subset \bbT^2$ passing through $(0,0)\in\bbT^2$ which is transverse to the centre direction. By redefining the unstable cone sufficiently large, we can further assume that $C_0$ is tangent to $\Cone^u$. The lift $C\subset\bbR^2$ of $C_0$ passing through the origin in $\bbR^2$ then lies close to a line $\gamma$. The line $\gamma$ divides the ray space $S$ into two connected components $S^-$ and $S^+$, and the curve $C$ divides $\bbR^2$ into two half spaces $\Gamma^-$ and $\Gamma^+$.
	
        If a leaf of the lifted center branching foliation intersected
        the lifted circle $C$, then by transversality
        a leaf of a sufficient close approximating foliation
        would also intersect twice.
        However, a Poincar\'e-Bendixson argument shows this is not possible
        (see \cref{uniquehit} in the next section for a similar argument).
        Thus, as a forward centre curve through the origin intersects $C$ at the origin, any such curve must lie entirely in either $C^-$ or $C^+$. The orientations on the transverse $E^c$ and $\Cone^u$ then imply then that all forward centre curves from the origin all in fact lie in all must lie on only one of these half spaces, which we take to be $C^+$. Now if a ray lies a finite distance from a forward centre curve, then it necessarily lies in $\Gamma^+$, or is one of the two points corresponding to $\gamma$ itself. However, every forward centre curve lies close to a curve obtained by lifting a circle in a suspension on $C_0$, so such a ray cannot be one of the points corresponding to $\gamma$.
    \end{proof}

	By giving an orientation on $S$ we obtain the usual notion of `clockwise' and `counterclockwise' on the circle. We assume without loss generality that the path from $R(\fmax)$ to $R(\fmin)$ that is contained in $S^+\subset S$ points in the clockwise direction. From a natural perspective this means that the ray $R(\fmin)$ lies clockwise from $R(\fmax)$.
	\begin{lem}\label{lem:perturbation}
		There are smooth foliations $\fcw$ and $\fccw$ of $\bbR^2$ which descend to $\bbT^2$ such that:
		\begin{enumerate}
		    \item both $T\fcw$ and $T\fccw$ are a small distance from the centre direction, and are transverse to $\Cone^u$
		    \item with respect to the orientation given by $\Cone^u$, $T\fcw$ lies below $E^c$ and $T\fccw$ above $E^c$
		    \item the rays $R(\fcw)$ and $R(\fccw)$ are contained in $S^+\subset S$
		    \item the path from $R(\fmin)$ to $R(\fcw)$ that is contained in $S^+$ goes in the clockwise direction, while the path from $R(\fmax)$ to $R(\fccw)$ that is contained in $S^+$ goes in the counterclockwise direction.
		    \item if $A$ has distinct eigenvalues, then $R(\fcw)$ and $R(\fccw)$ are not tangent to the $A$-invariant directions.
		\end{enumerate}
	\end{lem}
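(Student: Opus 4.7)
The plan is to construct $\fcw$ and $\fccw$ by integrating smooth line fields obtained as small tilts of $E^c$. Using the orientations on $E^c$ and from $\Cone^u$, for small $\alpha > 0$ define the continuous line field $V^{cw}_\alpha$ (resp.\ $V^{ccw}_\alpha$) obtained by rotating $E^c$ pointwise by angle $\alpha$ in the direction placing it below (resp.\ above) $E^c$ in the $\Cone^u$-orientation. Applying a small mollification yields smooth line fields that remain within $\alpha/2$ of the rotated versions, so they are uniformly below (resp.\ above) $E^c$, close to $E^c$, and transverse to $\Cone^u$. Integrating gives the smooth foliations $\fcw$ and $\fccw$ on $\bbT^2$, with properties (1) and (2) immediate from the construction.

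Property (3) follows by reusing the Poincar\'e--Bendixson-type argument from \cref{lem:split}: because $T\fcw$ is close to $E^c$ and transverse to $\Cone^u$, the transverse circle used there remains transverse to $\fcw$, and a forward leaf through the origin cannot cross its lift $C$ a second time without producing a closed invariant bigon of bounded volume, violating \cref{lengthvolume}. Thus forward leaves of $\fcw$ lie in the half-plane $\Gamma^+$, giving $R(\fcw)\in S^+$, and likewise for $\fccw$.

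The main obstacle is property (4): the tilt must genuinely shift the asymptotic ray, not merely the pointwise leaf position (note that two curves can have one strictly above the other in $\bbR^2$ yet share an asymptotic slope). My strategy is to pass to Poincar\'e first-return maps on the transverse circle $C_0$, whose rotation numbers determine the rays of smooth foliations close to $E^c$. I would replace $\fmin$ with its smooth approximant $\feps$, noting that $R(\feps) = R(\fmin)$ by the argument of \cref{lem:ray}. Choosing $\eps$ much smaller than $\alpha$, the line field driving $\fcw$ is uniformly below $T\feps$, which forces the $\fcw$-return map to advance each point of $C_0$ strictly farther in the clockwise sense than the $\feps$-return map, by a uniform amount. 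Uniform strict inequality of return maps yields strict inequality of their rotation numbers, placing $R(\fcw)$ strictly clockwise of $R(\fmin)$ in $S^+$. The dual argument with an approximant of $\fmax$ handles $R(\fccw)$.

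Finally, property (5) is easy to arrange: the rays $R(\fcw)$ and $R(\fccw)$ vary continuously with the tilt angle and the mollification, and an $A$ with distinct eigenvalues excludes only finitely many forbidden directions in $S$, so a generic small adjustment of $\alpha$ produces rays avoiding the $A$-invariant directions while preserving (1)--(4).
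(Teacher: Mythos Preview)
Your approach is essentially the paper's: tilt $E^c$ slightly below/above, smooth, integrate, and then compare Poincar\'e return maps on the transverse circle $C_0$ to control the asymptotic ray. The paper carries out exactly this, constructing $E^{\mathrm{cw}}$ via $X^c - \eps X^u$ followed by smoothing, and asserting that the return map of $\fcw$ has lower rotation number than that of the approximating foliation, hence $R(\fcw)$ sits clockwise of $R(\fmin)$; property~(5) is then handled by a further clockwise nudge if needed.

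There is, however, a genuine gap in your justification of~(4). The assertion ``uniform strict inequality of return maps yields strict inequality of their rotation numbers'' is false as stated. If $G$ is a lift of a circle homeomorphism with a fixed point (rotation number~$0$) at which $G'<1$, then $G - \delta$ for small $\delta>0$ is still a lift with a nearby fixed point, hence also rotation number~$0$, despite $G-\delta < G$ everywhere. Rotation number is only upper/lower semicontinuous and is typically locally constant on rational plateaux, so a uniform pointwise gap between return maps does not by itself separate the rays. The same issue undermines your argument for~(5): the ray does not vary continuously with the tilt angle in general, so ``generic $\alpha$'' is not enough to avoid the $A$-invariant directions. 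The paper sidesteps the second point by perturbing \emph{further clockwise} rather than appealing to continuity; for~(4) it simply asserts the lower rotation number without your intermediate (and incorrect) lemma, so your write-up is not less rigorous than the paper's, but you should drop the false general claim and either leave the step at the paper's level of detail or supply a correct argument specific to this situation.
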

	\begin{proof}
		We shall start by finding $\fcw$. Let $\feps$ be an approximating foliation associated to $\bran$. Then $\feps$ descends to a suspension of homeomorphism on some circle $C_0\subset\bbT^2$  which is transverse to $E^c$. Now take a smooth distribution $E^{\mathrm{cw}}$ to lie sufficiently close to $E^c$ so that it remains transverse to the unstable cone, but that with respect to the orientation on the unstable cone, lies strictly below $E^c$.
        
        For concreteness, this construction may be done as follows.
        Let $X^c$ be the (continuous) vector field consisting of unit vectors that point in in the
        direction of $E^c$ with the same orientation as $E^c$,
        and let $X^u$ be any vector field consisting
        of unit vectors lying inside of the cone family $\mathcal{C}^u$
        and with the same orientation as the cone family.
        If $\eps > 0$ is small, then the vector field
        defined by $X = X^c - \eps X^u$
        consists of vectors close to $E^c$ but rotated slightly clockwise
        away from $\mathcal{C}^u$.
        This vector field is continuous and not smooth in general.
        By a small perturbation (much smaller than $\eps$),
        we can approximate $X$ by a smooth vector field $X^{\mathrm{cw}}$
        such that its vectors are close to $E^c$ but rotated slightly
        clockwise away from $\mathcal{C}^u$.
        This smooth vector field then defines $E^{\mathrm{cw}}$.

        As a smooth distribution, $E^{\mathrm{cw}}$ integrates to a smooth foliation $\fcw$. The return map of the flow along $T\fcw$ to the circle $C_0$ must then have a lower rotation number than that of the return map of $T\feps$, implying that $R(\fcw)$ lies a small clockwise distance in $S$ from $R(\fmin)$. With this, we obtain the third and fourth properties. The last property can be satisfied by refining our perturbation: If $R(\fcw)$ happened to point in the expanding $A$ direction, then another small clockwise perturbation of $T\fcw$ will integrate to the desired foliation.
		
		By instead perturbing $E^c$ to $E^{\mathrm{ccw}}$ lying strictly above $E^c$, the resulting distribution integrates to the desired $\fccw$ by a similar argument.
	\end{proof}
	The foliations $\fcw$ and $\fccw$ are thought of as clockwise and counterclockwise perturbations to the centre direction, respectively. To illustrate to the reader the objects we are considering in $S$, below is a figure of the ray space $S$ indicating the relationships we have established from the rays of $\fmax$, $\fmin$, $\fcw$ and $\fccw$. 
    
    \begin{figure}[h]
	    \includegraphics{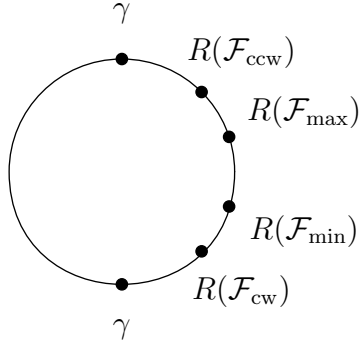}
		\caption{A depiction of the ray space $S$ and the points we are considering in S.}
		\label{fig:rayhelp}
	\end{figure}

	The next step is to iterate these perturbed foliations backward. Given any foliation $\mathcal{F}$ of $\bbR^2$ which descends to $\bbT^2$, define a foliation $f^{-n}(\mathcal{F})$ of $\bbR^2$ by $f^{-n}(\mathcal{F})(p) = f^{-n}(\mathcal{F}(f^n(p)))$. This sequence of foliations $\mathcal{F}_n$ does not necessarily descend to a foliation on $\bbT^2$, but we now show that it still has an associated ray. Observe that since the linearisation $A$ is an orientation preserving linear map, it induces an orientation preserving map $S\to S$, which we also denote as $A$.
	
	\begin{lem}\label{lem:iterate}
		Let $\mathcal{F}$ be either $\mathcal{F}_{cw}$ or $\mathcal{F}_{ccw}$, and define $\mathcal{L}\in f^{-n}(\mathcal{F})$ to be the leaf of $f^{-n}(\mathcal{F})$ through the origin. Then the forward half of $\mathcal{L}$ lies a bounded distance from the ray $R(f^{-n}(\mathcal{F})):= A^{-n}(R(\mathcal{F}))$. This distance is uniformly bounded for all $n$ and choices of $\mathcal{L}$.
	\end{lem}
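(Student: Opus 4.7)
The plan is to compare $f^{-n}$ with $A^{-n}$ pointwise on $\bbR^2$ and then feed in that leaves of $\mathcal{F}$ are already bounded-distance perturbations of straight lines. The key point is that, unlike forward iterates, the backward iterates of $f$ and $A$ remain uniformly close precisely in the degenerate cases that are still open after \cref{lem:complex}.

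First, since $\mathcal{F}$ descends to a smooth foliation on $\bbT^2$, its lift sits at uniform Hausdorff distance $D$ from the linear foliation by translates of $R(\mathcal{F})$; in particular, the forward half of the leaf of $\mathcal{F}$ through any $p \in \bbR^2$ lies in the $D$-neighbourhood of the ray $p + R(\mathcal{F})$.

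Second, writing the lift as $f = A + g$ with $g$ periodic (hence bounded) on $\bbR^2$, a short induction gives
\[
f^{-n}(y) - A^{-n}(y) \;=\; -\sum_{j=1}^{n} A^{-j}\, g\!\left(f^{-(n-j+1)}(y)\right).
\]
At this stage of the section we are reduced, via \cref{lem:complex} and the preceding volume lemma, to treating a linearisation $A$ that is either a homothety or a non-trivial Jordan block with positive eigenvalue $\lambda > 1$. In both cases $\|A^{-j}\|$ decays like $|\lambda|^{-j}$ up to a polynomial Jordan factor, so $\sum_{j\geq 1} \|A^{-j}\|$ converges and yields a uniform constant $C$ with $\|f^{-n}(y) - A^{-n}(y)\| \leq C$ for all $y \in \bbR^2$ and $n \geq 0$.

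Third, I would assemble the estimates. Set $q_n := f^n(0)$ and let $\mathcal{M}$ denote the leaf of $\mathcal{F}$ through $q_n$, so the forward half of $\mathcal{L}$ is $f^{-n}$ applied to the forward half of $\mathcal{M}$, which by step one lies in a $D$-tube around $q_n + R(\mathcal{F})$. By linearity and the uniform bound on $\|A^{-n}\|$, the set $A^{-n}(\mathcal{M})$ then lies in a uniform tube around $A^{-n}(q_n) + A^{-n}(R(\mathcal{F}))$. Applying step two at $y = q_n$ and using $f^{-n}(q_n) = 0$ gives $\|A^{-n}(q_n)\| \leq C$, so this translated ray is itself at bounded Hausdorff distance from $A^{-n}(R(\mathcal{F}))$ based at the origin. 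A final application of the pointwise bound transfers from $A^{-n}(\mathcal{M})$ back to $f^{-n}(\mathcal{M}) = \mathcal{L}$, completing the estimate with a constant independent of $n$ and of the choice $\mathcal{F} \in \{\fcw,\fccw\}$.

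The main obstacle is this asymmetry between forward and backward iterates: $\|f^n - A^n\|$ is typically unbounded in exactly the degenerate cases under consideration, so it is essential to phrase everything in terms of $f^{-n}$ where $A^{-1}$ is a contraction and the series in step two converges. Once that observation is in place, the remaining work is bookkeeping, namely verifying uniform boundedness of $g$ from $\bbZ^2$-periodicity of $f - A$ and checking that the polynomial Jordan factor is absorbed by the geometric decay $|\lambda|^{-j}$ with $|\lambda| > 1$.
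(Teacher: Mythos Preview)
Your argument is correct in the cases you isolate, and in fact more generally whenever $A$ is expanding: then $A^{-1}$ is a contraction (up to a polynomial factor in the Jordan case), the series $\sum_{j\ge 1}\|A^{-j}\|$ converges, and your telescoping identity yields a uniform pointwise bound $\|f^{-n}(y)-A^{-n}(y)\|\le C$ independent of $n$ and $y$. This is more explicit than the paper's own proof, which defers the uniformity to \S 2 of \cite{enco}, and it does suffice for the immediate application inside the proof of \cref{thm:linearisation}.

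The gap is one of scope. The lemma is invoked again, via \cref{lem:limit}, in the proof of \cref{lem:almostparallel}, and there $A$ is an arbitrary linearisation with eigenvalues of distinct magnitude---so it may be hyperbolic ($|\lambda_1|<1$) or non-hyperbolic ($|\lambda_1|=1$). In those cases $\|A^{-j}\|$ does not decay, your series diverges, and the pointwise estimate $\|f^{-n}(y)-A^{-n}(y)\|\le C$ is simply false: backward orbits of $f$ and $A$ can drift apart unboundedly along the slow eigendirection. The paper handles these cases by a different mechanism, controlling only the distance from the leaf to the \emph{line} through $A^{-n}(R(\mathcal{F}))$ rather than comparing $f^{-n}(y)$ with $A^{-n}(y)$ pointwise; this is precisely where the last item of \cref{lem:perturbation} is used, guaranteeing that $R(\mathcal{F})$ is not an eigendirection so that the component of the error transverse to $A^{-n}(L)$ stays summable even though $A^{-1}$ is not globally contracting. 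Your approach gives a clean self-contained proof in the expanding regime, but you would need a genuinely different argument (or an appeal to \cite{enco}) for the hyperbolic and non-hyperbolic cases that the statement, as written and as later used, must also cover.
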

	\begin{proof}
		If $L\subset\bbR^2$ is the line containing $R(\mathcal{F})$, then using the fact that $A$ is a finite distance from $f$, it is straightforward to see that any leaf of $f^{-n}(\mathcal{F})$ is a bounded distance from $A^{-n}(R(\mathcal{F}))$. This distance is naturally bounded across all choices of $\mathcal{L}$.
		To see uniform boundedness for all $n$, we refer the reader to \S 2 of \cite{enco}. The proof of this property there again uses the finite distance between $f$ and $A$ to show the result under the assumption that $A$ is hyperbolic and $R(\mathcal{F})$ is not tangent to the contracting eigendirection of $A$. When $A$ has an eigenvalue of $1$, then the proof is nearly identical, and uses the final property of \cref{lem:perturbation}. When $A$ is expanding, the argument is easier.
	\end{proof}	
	Suppose further now that $T\mathcal{F}$ is transverse to $E^c$ and $\Cone^u$, as is the case for $\fcw$ and $\fccw$. Then the tangent lines $Tf^{-n}(\mathcal{F})$ converge to the centre direction as $n\to\infty$. Since $f$ is partially hyperbolic there exists a centre cone family, that is, a cone family transverse to $\Cone^u$ which contains both $T\mathcal{F}$ and $E^c$, and that is invariant under $Df^{-1}$. This cone family inherits a $Df^{-1}$-invariant orientation from the orientation on $E^c$. Using this orientation, we consider the sequence of curves $\mathcal{L}_n$ emanating from the origin given by the forward half of the leaf of $f^{-n}(\mathcal{F})$ through the origin. By an Arzela-Ascoli argument, this sequence has a convergent subsequence in the compact open topology. The limit of such a subsequence is then a complete forward centre curve emanating from the origin. We use this to prove the following.
	
	\begin{lem}\label{lem:limit}
		We have $lim_{n\to\infty}(A^{-n}(R(\fcw))) = R(\fmin)$ and\newline $lim_{n\to\infty}(A^{-n}(R(\fccw))) = R(\fmax)$.
	\end{lem}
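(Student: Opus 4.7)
The plan is to prove $\lim_{n \to \infty} A^{-n}(R(\fcw)) = R(\fmin)$ in $S$; the case of $\fccw$ and $\fmax$ is symmetric. Let $\mathcal{L}_n$ denote the forward half of the leaf of $f^{-n}(\fcw)$ through the origin. I will first show that $\mathcal{L}_n$ converges on compact sets to the forward half of the leaf of $\fmin$ through the origin, and then deduce convergence of the corresponding rays by combining this with the bounded-distance estimates of \cref{lem:iterate} and \cref{lem:ray}.

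For the leaf convergence, the Arzela-Ascoli argument outlined just above the lemma shows that every subsequence of $\mathcal{L}_n$ has a further subsequence converging on compact sets to some forward centre curve $\mathcal{L}^*$ emanating from the origin. To identify $\mathcal{L}^*$, recall from \cref{lem:perturbation} that $T\fcw$ lies strictly below $E^c$ with respect to the centre cone orientation; since $Df$ preserves both the centre cone and this orientation, $Tf^{-n}(\fcw)$ retains this property for every $n$. Suppose that the forward half of $\mathcal{L}_n$ first met the forward half of $\fmin(0)$ away from the origin at a point $q$. Just past the origin the strict tangential inequality forces $\mathcal{L}_n$ to lie on the ``below'' side of $\fmin(0)$; to cross at $q$ the tangent of $\mathcal{L}_n$ would have to lie weakly above $E^c(q)$, contradicting the strict pointwise inequality. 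Hence each $\mathcal{L}_n$ stays weakly below the forward half of $\fmin(0)$, and the limit $\mathcal{L}^*$ does as well. Since $\fmin(0)$ is by construction the lowest forward centre curve through the origin, any forward centre curve through the origin lying weakly below it must equal it, so $\mathcal{L}^* = \fmin(0)$. As every subsequential limit coincides, the full sequence $\mathcal{L}_n$ converges to the forward half of $\fmin(0)$ on compact sets.

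To conclude the ray limit, let $R^*$ be any subsequential limit of $A^{-n}(R(\fcw))$ in the compact space $S$. By \cref{lem:iterate} there is a uniform constant $C > 0$ such that every point of $\mathcal{L}_n$ lies within distance $C$ of the ray $A^{-n}(R(\fcw))$. For each point $p$ on the forward half of $\fmin(0)$, the leaf convergence furnishes $p_n \in \mathcal{L}_n$ with $p_n \to p$, and hence points $q_n \in A^{-n}(R(\fcw))$ with $d(p_n, q_n) \leq C$. Along the subsequence where the rays converge, the $q_n$ accumulate to some $q \in R^*$ with $d(p, q) \leq C$, so the forward half of $\fmin(0)$ lies within bounded distance of $R^*$. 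By \cref{lem:ray} it also lies within bounded distance of $R(\fmin)$. Two distinct rays from the origin diverge linearly in distance, so an unbounded curve cannot lie within bounded distance of both unless they coincide. Therefore $R^* = R(\fmin)$, and hence $A^{-n}(R(\fcw)) \to R(\fmin)$.

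The main obstacle is globalising the non-crossing argument. The pointwise inequality $T f^{-n}(\fcw) < E^c$ gives the correct comparison of tangent slopes at any putative first crossing, but one must ensure that the notion of ``below $\fmin(0)$'' is consistent along the entire forward half of $\mathcal{L}_n$. This is handled by the global orientations on $E^c$ and the centre cone, which provide a coherent local trivialisation in which the slope comparison makes sense throughout the plane.
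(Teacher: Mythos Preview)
Your proof is correct and follows essentially the same approach as the paper. The paper is terser: it asserts directly that $\lim_{n\to\infty} A^{-n}(R(\fcw))$ exists (since $A$ has real eigenvalues, the induced map on $S$ has well-understood dynamics) and that the subsequential limit curve must be the lowest forward centre curve simply because $Tf^{-n}(\fcw)$ lies strictly below $E^c$ for all $n$, without spelling out the explicit non-crossing and subsequential-ray arguments you supply.
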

	\begin{proof}
		We prove the first limit, the second follows by similar argument. Let $\mathcal{L}_n$ be the forward half of the leaf in $f^{-n}(\fcw)$ that passes through the origin. By the discussion preceding the lemma, the sequence $\mathcal{L}_n$ has a convergent subsequence, and the limit $\mathcal{L}$ of this subsequence is a complete forward centre curve through the origin. By \cref{lem:iterate}, the limit $\mathcal{L}$ lies a uniformly bounded distance from $\lim_{n\to\infty}A^{-n}(R(\fcw))$, which we note does exist since $A$ cannot have complex eigenvalues. Thus, if we can show that $\mathcal{L}$ is actually the forward half of a leaf in $\fmin$, we will have proved the result.
		
		Recall that $T\fcw$ lies strictly below $E^c$ with respect to the orientation on $\Cone^u$. Since this orientation is preserved by $Df$, then $T f^{-n}(\mathcal{F})$ lies strictly below $E^c$ for all $n$. Thus the limit $\mathcal{L}$ must in fact be the lowest forward centre curve through the origin. This is, by definition, the forward half of a leaf of $\fmin$, giving the result.
	\end{proof}
		The final observation we need to make is that the dynamics of the map $A:S\to S$ is determined by the eigenvalues of the linearisation. An invariant line of $A$ quotients down to an antipodal pair of fixed points in $S$. If $A$ has distinct eigenvalues, the eigenline corresponding to the greater eigenvalue corresponds a pair of attracting fixed points, while the other eigenline becomes two repelling fixed points. If $A$ is a homothety, the induced map on $S$ is the identity. When $A$ has a non-trivial Jordan block, the map has just one pair of fixed points which are each attracting on one side and repelling on the other. Each of these behaviours is demonstrated in \cref{fig:rays}, and they allow us rule out the remaining cases.
	\begin{figure}[h]
		\includegraphics{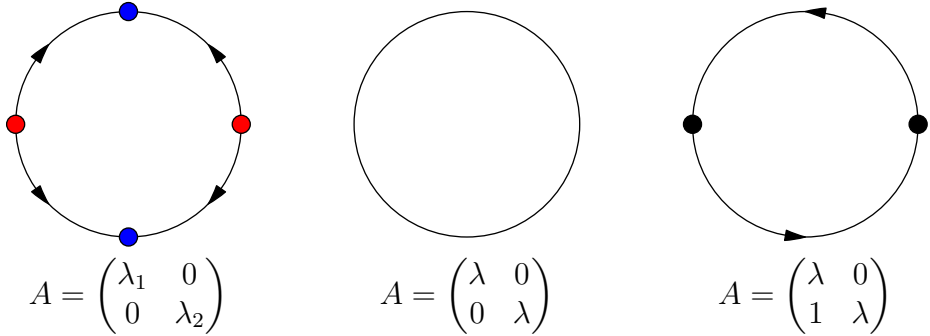}
		\caption{The dynamics of the map $A:S\to S$. From left to right are the cases when $A$ has distinct eigenvalues, $A$ is a homothety, and when $A$ has a non-trivial Jordan block.}
		\label{fig:rays}
	\end{figure}
	
	\begin{proof}[Proof of \cref{thm:linearisation}]
		The possibility of $A$ admitting complex eigenvalues was ruled out in \cref{lem:complex}.
		
		Next suppose that $A$ is a homothety, so that its induced map $S\to S$ is the identity. Thus $A^{-n}(R(\fcw)) = R(\fcw) \neq R(\fmin)$, which contradicts the fact that $\lim_{n\to\infty}(R(\fcw))= R(\fmin)$ by \cref{lem:limit}.

		Finally, suppose that $A$ has a non-trivial Jordan block. Then the map $A:S\to S$ has only two fixed points, each of which is repelling on one side and attracting on the other, as illustrated in \cref{fig:rays}. Only one of these points can lie in the component $S^+$ of $S\setminus \gamma$, and so $R(\fmax)$ and $R(\fmin)$ must both be this fixed point. Then one of $R(\fcw)$ and $R(\fccw)$ lies on the side of this fixed point that is attracting under $A$, and the other, the side that is repelling. Without loss of generality we may assume then that $A^{-n}(R(\fcw))$ converges to $R(\fmax) = R(\fmin)$, but that $A^{-n}((R(\fccw)))$ converges to the point antipodal to $R(\fmin)$. This once again contradicts \cref{lem:limit}.
	\end{proof}

	With \cref{thm:linearisation} proved, we conclude the section by using the tools we have developed to establish a property that will be used in the construction of a leaf conjugacy. We relax our assumption that $A$ is expanding, so that we are assuming that $A$ is any linearisation $f$ of a partially hyperbolic endomorphism without periodic centre annuli. We have shown that $A$ must have eigenvalues of distinct magnitude, so let $\mathcal{A}^c$ denote the linear foliation of $\bbT^2$ associated to the smaller eigenvalue of $A$.
	\begin{lem}\label{lem:almostparallel}
		Any invariant centre branching foliation $\bran$ is a bounded distance from $\mathcal{A}^c$ on the universal cover.
	\end{lem}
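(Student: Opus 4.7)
The plan is to use the ray-space framework developed in this section to pin down the slope of the linear foliation approximating $\bran$. As recalled before \cref{lem:complex}, the lift of $\bran$ to $\bbR^2$ lies at bounded distance from some linear foliation $\mathcal{L}$, and since $\bran$ is $f$-invariant while $f$ lies at bounded distance from $A$, the foliation $\mathcal{L}$ must be $A$-invariant. Because $A$ has real eigenvalues of distinct magnitudes, $\mathcal{L}$ is either $\mathcal{A}^c$ or the linear foliation $\mathcal{A}^u$ associated to the larger eigenvalue, so it suffices to rule out the latter.

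First, I would identify $R(\fmin)$ and $R(\fmax)$ in terms of the eigendirections of $A$. Writing $0 < \lambda_1 < \lambda_2$ for the eigenvalues after the reductions made in this section, the induced map $A : S \to S$ has the $\mathcal{A}^u$-direction as an attracting fixed point and the $\mathcal{A}^c$-direction as a repelling fixed point. Consequently, $A^{-n}$ drives every ray that is not tangent to $\mathcal{A}^u$ toward the $\mathcal{A}^c$ fixed point in $S$. The last clause of \cref{lem:perturbation} guarantees that $R(\fcw)$ and $R(\fccw)$ are not tangent to $\mathcal{A}^u$, so \cref{lem:limit} yields that both $R(\fmin)$ and $R(\fmax)$ coincide with the $\mathcal{A}^c$-direction inside $S^+$.

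Next, I would use the fact that the forward half of any leaf of $\bran$ through the origin lies trapped between the lowest and highest forward centre curves from the origin, which are the forward halves of leaves of $\fmin$ and $\fmax$, respectively. This sandwiches $R(\bran)$ inside the closed subarc of $S^+$ running clockwise from $R(\fmax)$ to $R(\fmin)$. Since these endpoints coincide at the $\mathcal{A}^c$-direction, this subarc degenerates to a single point and $R(\bran)$ is forced to equal the $\mathcal{A}^c$-direction. As the ray of a branching foliation determines the slope of its approximating linear foliation, we conclude $\mathcal{L} = \mathcal{A}^c$, completing the proof.

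The main obstacle I anticipate is justifying the sandwiching of $R(\bran)$ between $R(\fmax)$ and $R(\fmin)$, since branching foliations can admit multiple forward centre curves through a single point. This should follow directly from the construction of $\fmin$ and $\fmax$ as the lowest and highest centre curves recalled before \cref{lem:ray}: with respect to the vertical ordering induced by the orientations on $\Cone^u$ and $E^c$, any forward centre curve through the origin lies between the lowest and the highest forward curves, and passing to the associated rays preserves this ordering on $S^+$.
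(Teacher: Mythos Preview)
Your proposal is correct and follows essentially the same argument as the paper: both use the last item of \cref{lem:perturbation} to ensure $R(\fcw)$ and $R(\fccw)$ avoid the eigendirections, invoke \cref{lem:limit} to identify $R(\fmin)$ and $R(\fmax)$ with the $\mathcal{A}^c$-ray in $S^+$, and then sandwich $R(\bran)$ between them to conclude. The only difference is cosmetic: you frame the argument as ruling out $\mathcal{A}^u$ among two candidate linear foliations, while the paper computes $R(\bran)$ directly.
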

	\begin{proof}
	    We have shown that $A$ must have distinct eigenvalues. The map $A:S\to S$ then has two pairs of fixed points, each pair corresponding to the eigenvalues $\lambda^c$ and $\lambda^u$ of $A$, with $\lambda^c<\lambda^u$. By the last item of \cref{lem:perturbation}, the rays $R(\fcw)$ and $R(\fccw)$ are not any of the fixed points of $A:S\to S$. Then $R(\fmax) = \lim_{n\to\infty}(A^{-n}(R(\fcw)))$ and $R(\fmin) = \lim_{n\to\infty}(A^{-n}(R(\fccw)))$ are both fixed points associated to $\lambda^c$. Only one such fixed point lies in $S^+$, so $R(\fmax)=R(\fmin)$. Since any forward centre curve must lie in between a cone region bounded be the lowest and highest forward centre curves, and this region must lie a finite distance from $R(\fmax)$, it follows that $R(\bran)=R(\fmax)$ for any invariant centre branching foliation $\bran$. Since $R(\fmax)$ is a fixed point associated to $\lambda^c$, then $\bran$ is a finite distance from $\mathcal{A}^c$.
	\end{proof}

	\section{Coherence and leaf conjugacy}\label{sec:coherent}
	In this section, we prove \cref{thm:classi}. Let $f_0:\bbT^2\to\bbT^2$ be partially hyperbolic. We assume for the remainder of this section $f_0$ does not admit any periodic centre annulus. Lift $f_0$ to $f:\bbR^2\to\bbR^2$, and similarly lift the linearisation of $f_0$ to $A:\bbR^2\to\bbR^2$. As an abuse of notation, we denote $\bran$ and $\feps$ as the lifts of the branching and approximating foliations respectively. Since $f_0$ does not admit any periodic centre annuli, then by \cref{thm:linearisation}, the linear map $A$ has distinct real eigenvalues $\lambda_1,\,\lambda_2$. Let $\mathcal{A}^c$ be the linear foliation of $\bbR^2$ associated to the smaller eigenvalue of $A$. Similarly let $\mathcal{A}^u$ be a linear foliation associated to $\lambda_2$. Define a projection $\pi^u:\bbR^2\to\bbR$ to be a linear map which maps each line of $\mathcal{A}^u$ onto $\bbR$ and whose kernel is the leaf of $\mathcal{A}^c$ through the origin.
    We will later use a projection $\pi^c : \bbR^2 \to \bbR$
    which is defined analogously.
    \begin{lem}\label{lem:Cbound}
		There is $C>0$ such that if $q\in\bran(p)$, then $|\pi^u(p)-\pi^u(q)|<C$.
	\end{lem}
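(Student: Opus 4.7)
The plan is to derive this lemma essentially as a direct corollary of \cref{lem:almostparallel} together with the fact that $\pi^u$ is a linear projection whose kernel is tangent to $\mathcal{A}^c$.

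First I would recall that, by \cref{lem:almostparallel}, the lifted branching foliation $\bran$ lies a bounded distance from $\mathcal{A}^c$ on $\bbR^2$. Concretely, this means there is some uniform $K > 0$ such that for every leaf $\mathcal{L}$ of $\bran$ there is a leaf $L$ of $\mathcal{A}^c$ with Hausdorff distance $d_H(\mathcal{L}, L) < K$. Next, since $\pi^u : \bbR^2 \to \bbR$ is linear, it is Lipschitz with some constant $M = \|\pi^u\|$. Crucially, the kernel of $\pi^u$ is the leaf of $\mathcal{A}^c$ through the origin, so $\pi^u$ is constant along every leaf of $\mathcal{A}^c$.

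Now take any $p \in \bbR^2$ and any $q$ on the same leaf $\mathcal{L}$ of $\bran$ as $p$. Choosing an associated leaf $L$ of $\mathcal{A}^c$ with $d_H(\mathcal{L}, L) < K$, pick points $p', q' \in L$ with $|p - p'| < K$ and $|q - q'| < K$. Since $\pi^u$ is constant on $L$ we have $\pi^u(p') = \pi^u(q')$, and the Lipschitz bound gives
\[
|\pi^u(p) - \pi^u(q)| \leq |\pi^u(p) - \pi^u(p')| + |\pi^u(q') - \pi^u(q)| < 2MK.
\]
Taking $C = 2MK$ (which is independent of the choice of $p$, $q$, and leaf $\mathcal{L}$) finishes the proof.

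I do not anticipate any real obstacle: the whole technical content is packaged in \cref{lem:almostparallel}, and the remainder is only a book-keeping step to convert Euclidean Hausdorff distance between leaves into a bound on the linear projection $\pi^u$. The only point to be slightly careful about is that \cref{lem:almostparallel} genuinely provides a uniform Hausdorff bound leaf-by-leaf (so that the constant $K$ does not depend on $p$), which is how the notion of almost-parallel foliations was set up in \cref{sec:para}.
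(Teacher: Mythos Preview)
Your argument is correct and follows exactly the same route as the paper: the paper's proof is a one-line invocation of \cref{lem:almostparallel} (``$\bran$ is almost parallel with $\mathcal{A}^c$'') to conclude that leaves of $\bran$ are uniformly bounded in the $\pi^u$-direction, and you have simply unpacked that implication via the Lipschitz/triangle-inequality bookkeeping. There is no difference in approach.
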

	\begin{proof}
		The branching foliation $\bran$ is almost parallel with $\mathcal{A}^c$ by \cref{lem:almostparallel}, which implies that leaves of $\bran$ are a uniformly bounded distance in $\pi^u$ direction.
	\end{proof}
    Recall from the introduction that there exists an
    unstable foliation $\mathcal{F}^u$ on $\bbR^2$. A key step in proving that $\bran$ is in fact a true foliation will be showing that leaf segments of $\mathcal{F}^u$ grow large in the $\pi^u$-direction under forward iteration. When $A$ is hyperbolic or non-hyperbolic (cases defined in the introduction), this property of $\mathcal{F}^u$ can be proved by using the Poincar\'e-Bendixson Theorem and `length vs.~volume' arguments, as is done in Section 2 of \cite{enco}. When $A$ is expanding, it becomes difficult to compare growth rates of centre and unstable curves, so length vs.~volume arguments are less practical. Instead, we will use the Poincar\'e-Bendixson Theorem paired topological properties of $\feps$, which are now well understood due to the results in \cref{sec:inside}.
	\begin{prop}\label{uniquehit}
		If $\lf\in\feps$, then $\lf$ can intersect each leaf of $\mathcal{F}^u$ at most once.
	\end{prop}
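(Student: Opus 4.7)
The plan is to argue by contradiction using an innermost bigon and the Poincar\'e-Bendixson Theorem. Suppose $\mathcal{L}\in\feps$ meets a leaf $W^u$ of $\mathcal{F}^u$ at two distinct points $p$ and $q$. The two arcs $\alpha\subset\mathcal{L}$ and $\beta\subset W^u$ joining $p$ to $q$ form an embedded loop in $\bbR^2$, bounding a compact topological disk $D$. By passing to an innermost such configuration---for instance, by minimizing the length of $\beta$ among all pairs consisting of an unstable subarc and a leaf of $\feps$ meeting it in two points---I may assume that no leaf of $\feps$ hits the open arc $\mathrm{int}(\beta)$ twice, so that $D$ contains no strictly smaller bigon of this form.

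Next I would analyze $\feps|_D$. Since $T\feps$ is within $\eps$ of the center direction $E^c$, and $E^c$ is transverse to the unstable direction, $\beta$ is transverse to $\feps$ while $\alpha$ is a leaf of $\feps$. Orient $\feps$ so that its flow enters $D$ across $\mathrm{int}(\beta)$. A positive orbit starting at an interior point of $\beta$ cannot cross $\alpha$ (distinct leaves of a foliation cannot meet transversely), cannot exit through $\beta$ again (this would produce a strictly smaller bigon, contradicting the innermost choice), and cannot accumulate on a singular point since $\feps$ is nonsingular. The Poincar\'e-Bendixson Theorem applied in the compact disk $D$ then forces the $\omega$-limit of such an orbit to be a closed leaf $C$ of $\feps$ contained in the interior of $D$.

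To finish, I would note that $C$ is a simple closed curve in $\bbR^2$ and so bounds a topological disk $\Delta$. Iterating the Poincar\'e-Bendixson argument, I may pass to an innermost closed leaf of $\feps$ inside $\Delta$, obtaining a disk foliated by $\feps$ whose boundary is a leaf and whose interior contains neither closed leaves nor singularities of $\feps$. A final application of Poincar\'e-Bendixson in this smaller disk then produces an orbit with empty admissible $\omega$-limit, which is absurd. Equivalently, one may observe that $C$ projects to a null-homotopic closed leaf of the nonsingular foliation $\feps$ on $\bbT^2$, which cannot exist.

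I expect the main obstacle to be executing the innermost-bigon reduction rigorously---ensuring that such a minimal configuration genuinely exists and that the resulting $\beta$ is a compact arc on which the transversality and non-reentrance conditions needed by Poincar\'e-Bendixson are valid. Once that bookkeeping is in place, the topological portion of the argument (flow-trapping in $D$ and the disk contradiction from an innermost closed leaf) is essentially formal.
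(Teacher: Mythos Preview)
Your argument is correct, but it takes the mirror image of the paper's route. The paper applies Poincar\'e--Bendixson to the \emph{unstable} foliation, using the arc $\alpha\subset\mathcal{L}$ as a transversal: since the unstable leaf $W^u$ meets $\alpha$ at both $p$ and $q$, the flow along $\mathcal{F}^u$ must have a closed orbit, i.e.\ $\mathcal{F}^u$ has a circle leaf; this is impossible because unstable leaves are lines. You instead apply Poincar\'e--Bendixson to the $\feps$ flow, using $\beta\subset W^u$ as a transversal, obtain a closed leaf of $\feps$ in $\bbR^2$, and derive a contradiction from the index of a nonsingular foliation on a disk (equivalently, from the nonexistence of null-homotopic closed leaves on $\bbT^2$). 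Both contradictions are standard; the paper's is a touch quicker because ``unstable leaves are lines'' is immediate.

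One simplification worth noting: the innermost-bigon reduction you flag as the main obstacle is actually unnecessary. Since $T\feps$ and $T\mathcal{F}^u$ are continuous and everywhere transverse, the $\feps$-flow crosses $\beta$ in a single consistent direction along all of $\mathrm{int}(\beta)$; once you orient so that this direction points into $D$, a forward orbit from $\mathrm{int}(\beta)$ can never re-exit through $\beta$ (that would require crossing outward), and it cannot cross $\alpha$ because $\alpha$ is a leaf. So the orbit is trapped in $D$ without any minimality assumption, and the rest of your argument goes through directly.
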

	\begin{proof}
		Suppose that a curve in $\bran$ intersects a leaf of $\mathcal{F}^u$ more than once. Since this curve of $\bran$ is transverse to $\mathcal{F}^u$, then by the Poincar\'e-Bendixson Theorem, $\mathcal{F}^u$ must have a leaf which is a circle. As $\mathcal{F}^u$ is a foliation consisting of lines, this is a contradiction.
	\end{proof}
	Since $\bran$ contains no periodic centre annuli, then $\feps$ contains no Reeb components or tannuli, so $\feps$ is the lift of a suspension. This implies the following property.
	\begin{lem}\label{sensiblehomeo}
		The leaf space $\bbR^2/\feps$ of $\feps$ is homeomorphic to $\bbR$. Moreover, the corresponding homeomorphism can be chosen to satisfy the following property: There exists a deck transformation $\tau \in \bbZ^2$ such that if $\mathcal{L}^c$ corresponds to $l \in\bbR$ in the leaf space, then $\tau^n(\mathcal{L}^c)$ corresponds to $l + n$.
	\end{lem}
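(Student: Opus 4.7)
The plan is to exploit the suspension structure of $\feps$ noted in the preceding paragraph. Fix an essential circle $C_0 \subset \bbT^2$ transverse to $\feps$ serving as the base of the suspension; every leaf of $\feps$ on $\bbT^2$ meets $C_0$ in uniformly bounded time. Let $\tilde{C}_0 \subset \bbR^2$ be a connected component of $\pi\inv(C_0)$, which is a properly embedded line since $C_0$ is essential. Let $\tau \in \bbZ^2$ be a generator of the stabiliser of $\tilde{C}_0$ in the deck group, so that $\pi\inv(C_0) = \bigsqcup_{n \in \bbZ} \tau^n \tilde{C}_0$; one may parametrise $\tilde{C}_0 \cong \bbR$ such that $\tau$ acts on it by $t \mapsto t+1$.

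The central claim is that each leaf of $\feps$ in $\bbR^2$ meets $\tilde{C}_0$ in exactly one point. Uniqueness follows from a Poincar\'e-Bendixson argument in the spirit of \cref{uniquehit}: two intersections of a leaf with $\tilde{C}_0$ would form a bigon bounding a disc on which $\feps$ restricted would admit a closed leaf, impossible in $\bbR^2$. For existence, parametrise a leaf $\lf$ by $\gamma : \bbR \to \bbR^2$. Bounded return time in $\bbT^2$ implies $S = \gamma\inv(\pi\inv(C_0))$ is an unbounded discrete subset of $\bbR$, and at each $s \in S$ we record the index $n(s)$ with $\gamma(s) \in \tau^{n(s)}\tilde{C}_0$. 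Since the transversals $\tau^n \tilde{C}_0$ are disjoint properly embedded lines asymptotic to a common rational direction, they partition $\bbR^2$ into strips enumerated by $\bbZ$ in the order given by $\tau$. A leaf crossing from one strip into the next cannot double back by the uniqueness clause, so $n(s)$ must change by exactly $+1$ or $-1$ at each consecutive element of $S$, consistently in one direction. Strict monotonicity together with the unboundedness of $S$ in both directions of $\gamma$ forces $n$ to exhaust $\bbZ$, so in particular $\lf$ meets $\tilde{C}_0$.

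Given the claim, define $\Phi : \bbR^2/\feps \to \bbR$ by sending each leaf to the parameter of its unique intersection with $\tilde{C}_0$. Bijectivity is immediate, and continuity in both directions follows from the fact that $\tilde{C}_0$ is a local transversal for $\feps$. For the deck transformation property: $\tau$ permutes the leaves of $\feps$ (being a deck transformation of a foliation descending to $\bbT^2$), preserves $\tilde{C}_0$ setwise, and restricts on it to $t \mapsto t+1$. Hence $\Phi(\tau \cdot \lf) = \Phi(\lf) + 1$ for every leaf $\lf$, and iteration yields $\Phi(\tau^n \cdot \lf) = \Phi(\lf) + n$.

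The main obstacle is verifying that the deck translates $\tau^n \tilde{C}_0$ partition $\bbR^2$ into strips enumerated in a fixed linear order by $n$, which is what underlies monotonicity of the index function. This uses that $C_0$ is essential so that $\tilde{C}_0$ lies a bounded distance from some line in $\bbR^2$, and that $\tau$ corresponds to translation transverse to this line, giving a sequence of quasi-parallel, pairwise disjoint curves indexed consistently by $\bbZ$.
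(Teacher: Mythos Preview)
The paper does not actually prove this lemma; it simply asserts that the suspension structure of $\feps$ (established just before the statement) ``implies the following property.'' Your argument is the natural way to unpack that assertion, and the overall strategy---identify the leaf space with a lifted transversal $\tilde C_0$---is sound.

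There is, however, a genuine confusion in your use of $\tau$. You define $\tau$ as a generator of the \emph{stabiliser} of $\tilde C_0$, so $\tau(\tilde C_0)=\tilde C_0$; you then immediately write $\pi\inv(C_0)=\bigsqcup_{n}\tau^{n}\tilde C_0$ and use the translates $\tau^{n}\tilde C_0$ to partition $\bbR^2$ into strips. If $\tau$ stabilises $\tilde C_0$ these translates are all the same curve and there are no strips at all, so your index function $n(s)$ is undefined. Your final paragraph repeats the slip, saying $\tau$ ``corresponds to translation transverse to this line,'' which is incompatible with $\tau$ stabilising $\tilde C_0$. Two distinct deck transformations are needed: the stabiliser $\tau$ of $\tilde C_0$, which acts on the transversal by $t\mapsto t+1$ and is the element appearing in the lemma's conclusion; and a complementary primitive element $\sigma\in\bbZ^2$ with $\pi\inv(C_0)=\bigsqcup_{n}\sigma^{n}\tilde C_0$, which indexes the strips in your existence argument. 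Once you separate these roles the proof goes through as written.
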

	We will frequently identify a leaf in $\feps$ with its corresponding representative in $\bbR$. If $X\subset\bbR^2$, define $U_{\eta}(X)=\{x\in\bbR^2:\dist(x,X)<\eta\}$.
	\begin{lem}\label{leavenbhd}
		If $\lf\in\feps$, then each of the connected components of $U_\eta(\lf)\setminus \lf$ contains a leaf of $\feps$.
	\end{lem}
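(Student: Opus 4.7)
The plan is to produce, for each component of $U_\eta(\lf)\setminus\lf$, an entire leaf of $\feps$ contained in that component. I would first observe that since $\feps$ is almost parallel to $\mathcal{A}^c$, the leaf $\lf$ is a properly embedded line in $\bbR^2$, so $U_\eta(\lf)\setminus\lf$ has exactly two connected components $U^+$ and $U^-$, distinguished by the transverse orientation of $T\feps$ coming from $\Cone^u$. I would then fix a short smooth transversal $\sigma:(-a,a)\to\bbR^2$ to $\feps$ through a point $p\in\lf$ with $\sigma(0)=p$. For small $s\neq 0$, $\sigma(s)$ lies in $U^+$ or $U^-$ according to the sign of $s$, and lies on a leaf $\mathcal{M}_s\neq\lf$. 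Since $\mathcal{M}_s$ is connected and disjoint from $\lf$, the full leaf $\mathcal{M}_s$ lies in the same component of $\bbR^2\setminus\lf$ as $\sigma(s)$. It therefore suffices to exhibit, for each sign of $s$, some small enough $s$ for which $\mathcal{M}_s\subset U_\eta(\lf)$.

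The key step, and the main obstacle, is this uniform closeness of nearby leaves to $\lf$. I would establish it via the suspension structure of $\feps$: by the results of \cref{sec:inside}, absence of periodic center annuli ensures that $\bran$ contains no Reeb components or tannuli, so (after passing to an orientation double cover if needed) $\feps$ descends to $\bbT^2$ as a suspension over a circle transversal. Combined with the $\tau$-equivariance from \cref{sensiblehomeo}, according to which a deck transformation $\tau$ acts on the leaf space by integer shifts, this yields a $\tau$-invariant saturated tube around $\lf$ in $\bbR^2$ of arbitrarily small transverse width. Such a tube consists of entire leaves of $\feps$, so any leaf meeting it is contained in it; the leaves $\mathcal{M}_s$ for small $s$ then lie in $U_\eta(\lf)$, furnishing the required leaves in $U^+$ and $U^-$.

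The delicate point in constructing such a thin saturated tube is ruling out that iterates of the return map of the suspension spread nearby points arbitrarily far along the base circle. I would handle this by combining the smoothness of $\feps$ with the compactness of the base circle on $\bbT^2$, arguing separately according to whether the return map has rational or irrational rotation number, and in the rational case using the structure of the (finitely many) periodic orbits of the return map together with the ordering on the leaf space.
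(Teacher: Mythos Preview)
Your overall strategy—reducing to the suspension structure and analyzing the holonomy of the return map—is sound and can be made into a correct proof. However, the step where you invoke a ``$\tau$-invariant saturated tube around $\lf$ of arbitrarily small transverse width'' does not work as written: the deck transformation $\tau$ of \cref{sensiblehomeo} acts on the leaf space by translation by $1$, so $\tau(\lf)\neq\lf$, and any $\tau$-invariant saturated set containing $\lf$ must contain every $\tau^n(\lf)$ and hence all of $\bbR^2$. In the rational-rotation-number case what you actually want is the \emph{other} primitive deck transformation, the one in the direction of the leaves: once tannuli are excluded the return map has finite order (so \emph{all} leaves are lifted circles, not merely ``finitely many periodic orbits''), each lifted leaf is invariant under that transformation, and compactness of a single period gives the thin tube immediately. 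In the irrational case no deck transformation fixes $\lf$, and one must argue directly that $\sup_{n\in\bbZ}|G^n(y')-G^n(y)|\to 0$ as $y'\to y$ for the lifted return map $G$; this follows from an elementary argument that works even for Denjoy-type holonomy, so smoothness of $\feps$ is not actually needed.

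For comparison, the paper's proof is a single sentence: it observes that among foliations lifted from $\bbT^2$, the neighbourhood property in question fails precisely when the foliation contains a Reeb component or a tannulus, and both have already been excluded by the results of \cref{sec:inside}. Your approach effectively reproves that classification fact by hand, which is more work but more self-contained.
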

	\begin{proof}
		The only foliations lifted from $\bbT^2$ which do not satisfy the desired property are those which admit Reeb components and tannuli. If $\bran$ contained either of these, $f$ would admit a periodic centre annulus.
	\end{proof}
	A \emph{$\Cone^u$-curve} is a curve tangent to $\Cone^u$; we will use $\Cone^u$-curves instead of unstable curves to utilise the compactness of $\bbT^2$. Since the unstable cone and centre direction are a bounded angle apart, then a $\Cone^u$-curve must `progress' some amount in the leaf space of $\feps$. This is captured in the following lemma.
	\begin{lem}\label{globaldelta}
		There is $\delta>0$ such that given a leaf $\mathcal{L}\in\feps$ which corresponds to $l \in \bbR$, the following property is satisfied: if $J^u$ is a $\Cone^u$ curve of length $1$ with an endpoint on $\mathcal{L}$, then the endpoints of $J^u$ lie on leaves which are at least $\delta$ apart in the leaf space of $\feps$.
	\end{lem}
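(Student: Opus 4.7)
The plan is to bound the leaf-space progress uniformly by introducing a closed transverse $1$-form associated with $\feps$, and then reading the progress as a line integral. The picture is that the bounded angle between $\Cone^u$ and $T\feps$, combined with compactness of $\bbT^2$, forces any $\Cone^u$-curve to accumulate a definite amount of leaf-space variation per unit of length.

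First I would observe that since $f$ admits no periodic centre annulus, $\feps$ contains no Reeb components or tannuli (by \cref{nostrip}), so on $\bbT^2$ it is a suspension of a homeomorphism of $S^1$. Such a foliation carries a holonomy-invariant transverse measure, and taking $\feps$ smooth enough in the Burago--Ivanov approximation we may represent that measure by a continuous closed $1$-form $\omega$ on $\bbT^2$ that annihilates $T\feps$. Orient $\omega$ so that $\omega(v)>0$ for $v$ in the positive direction of $\Cone^u$. Compactness of $\bbT^2$ together with transversality of $\Cone^u$ to $T\feps$ then yields a constant $c>0$ with $\omega(v)\geq c\|v\|$ for every $v\in\Cone^u$.

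Next I would lift $\omega$ to $\bbR^2$, where it is exact, and write $\omega=d\Pi$ for a continuous $\Pi:\bbR^2\to\bbR$. Because $\omega$ vanishes on $T\feps$, the function $\Pi$ is constant on $\feps$-leaves, and so is a leaf-space parameter; rescaling $\omega$ so that its period along $\tau$ equals $1$ makes $\Pi$ agree with the parameterization of \cref{sensiblehomeo}. For any $\Cone^u$-curve $J^u:[0,1]\to\bbR^2$ of length $1$,
\[
|\Pi(J^u(1))-\Pi(J^u(0))|=\int_0^1 \omega(\dot J^u(t))\,dt\geq c\cdot\length(J^u)=c,
\]
so the conclusion holds with $\delta=c$.

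The main obstacle is producing $\omega$ with enough regularity that the pointwise bound $\omega(v)\geq c\|v\|$ on $\Cone^u$ is genuinely available; this is supplied by the smooth suspension structure of $\feps$ together with standard invariant-measure theory for smooth circle diffeomorphisms (e.g.\ Denjoy rules out singular invariant measures in this setting). Should this regularity prove technically awkward, the alternative is a direct compactness/Arzel\`a--Ascoli contradiction on the space of unit $\Cone^u$-curves: the consistent orientations of $\Cone^u$ and $T\feps$ make $\Pi\circ J^u$ strictly monotone along any $\Cone^u$-curve, and choosing the parameterization of \cref{sensiblehomeo} from the transverse invariant measure makes the progress $|\Pi(J^u(1))-\Pi(J^u(0))|$ deck-invariant, so compactness of $\bbT^2$ produces the uniform $\delta$.
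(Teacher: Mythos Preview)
Your main line---building a continuous nowhere-vanishing closed $1$-form $\omega$ from a holonomy-invariant transverse measure---has a genuine gap. Such an $\omega$ exists precisely when the invariant measure of the return map has a continuous positive density, and this is not supplied by the hypotheses. If the rotation number is rational (which certainly occurs when $\mathcal{A}^c$ has rational slope), every invariant measure is supported on periodic orbits and is atomic. If the rotation number is irrational, Denjoy's theorem requires the return map to be at least $C^{1+\mathrm{bv}}$, whereas the Burago--Ivanov approximating foliation is only a $C^0$ foliation with $C^1$ leaves, so its holonomy is merely a homeomorphism; and even when Denjoy does apply, it yields only a topological conjugacy to a rotation, so the unique invariant measure may still be singular with respect to Lebesgue (regularity of the conjugacy is Herman--Yoccoz territory and needs Diophantine conditions). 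So the inequality $\omega(v)\ge c\|v\|$ is not available in general.

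Your fallback compactness argument inherits the same defect, because you again invoke the invariant measure to obtain full $\bbZ^2$-invariance of the progress function. Without that, the progress $|\Pi(J^u(1))-\Pi(J^u(0))|$ is only invariant under the single deck transformation $\tau$ of \cref{sensiblehomeo}, and a fundamental domain for $\tau$ alone is a non-compact strip, so Arzel\`a--Ascoli on the space of unit $\Cone^u$-curves does not close. The paper circumvents this by arguing \emph{leaf by leaf} rather than curve by curve: for a fixed leaf $\mathcal{L}_l$ it uses \cref{leavenbhd} to produce leaves $\hat{\mathcal{L}}^\pm$ lying entirely inside the two components of $U_\eta(\mathcal{L}_l)\setminus\mathcal{L}_l$ (with $\eta$ uniform, coming from the angle bound), so that \emph{every} unit $\Cone^u$-curve from $\mathcal{L}_l$---regardless of its starting point on that non-compact leaf---must cross one of them. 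This yields $\delta_l>0$ depending only on $l$; then $\tau$-invariance reduces to $l\in[0,1]$ and a compactness argument in the one-dimensional leaf space finishes. The essential idea you are missing is this use of \cref{leavenbhd} to collapse the non-compact leaf direction.
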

	\begin{proof}
		We begin by establishing the property for a single leaf. Let $\eta>0$ be very small. Then since $E^c$ and $\Cone^u$ are a bounded angle away from each other, if $p\in\mathcal{L}$ and $J^u$ is a unit length $\Cone^u$ curve with $p$ as an endpoint, then $J^u$ is not contained within $U_{\eta}(\mathcal{L})$. By \cref{leavenbhd}, $J^u$ intersects a leaf $\hat{\lf}$. If $\hat{l}$ corresponds to $\lf$, then define $\delta_{l} = |\hat{l}-l|$.
		
		Now consider $[0,1]\subset\bbR$ in the leaf space. Since $[0,1]$ is compact, the existence of $\delta_l$ for each $l\in[0,1]$ implies that there is $\delta>0$ such that all leaves in $[0,1]$ satisfy the property in the proposition. Now suppose that $\mathcal{L}\in\mathcal{F}^c$ and that $J^u$ is a $\Cone^u$ curve of unit length with an endpoint on $\mathcal{L}$. If $\tau$ is as in \cref{sensiblehomeo}, then $\tau^n\lf\in[0,1]$ for some $n$, so the claim holds for $\tau^n\lf$. Since the unstable cone commutes with deck transformations, $\tau^n(J^u)$ is a $\Cone^u$ curve of unit length with an endpoint on $\tau^n\lf$, so the property holds for $\lf$.
	\end{proof}
	This progression of unstable curves in the leaf space of $\mathcal{F}^c_{\eps}$ then implies that large unstable curves will be long in the $\pi^u$-direction.
	\begin{lem}\label{unstableendpoints}
		If $q\in\mathcal{F}^u(p)$ and $q\ne p$, then $\sup_n|\pi^uf^n(p)-\pi^uf^n(q)|=\infty$.
	\end{lem}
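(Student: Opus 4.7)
The plan is to transfer the exponential expansion of the unstable arc from $p$ to $q$ first into growth of the leaf-space coordinate of $\feps$, and then into growth of $\pi^u$ using the deck transformation of \cref{sensiblehomeo}.

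To begin, let $J^u\subset\mathcal{F}^u(p)$ be the arc joining $p$ to $q$, of positive length $\ell$. Since $f$ is a lift of a map on the compact $\bbT^2$ and there is uniform expansion inside $\Cone^u$, one has $\mu>1$ with $\length(f^n(J^u))\geq \mu^n\ell$, and each $f^n(J^u)$ is a $\Cone^u$-curve lying in $\mathcal{F}^u(f^n(p))$.

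Next I would use \cref{sensiblehomeo} to identify the leaf space of $\feps$ with $\bbR$ via a continuous leaf projection $\Psi:\bbR^2\to\bbR$. Because $T\feps$ is transverse to $\Cone^u$, $\Psi$ is strictly monotone along any $\Cone^u$-curve, in particular along $f^n(J^u)$. Partition $f^n(J^u)$ into $\lfloor\mu^n\ell\rfloor$ consecutive subsegments of length one (with a shorter remainder), and apply \cref{globaldelta} to each. Summing the $\delta$ contributions via monotonicity of $\Psi$ yields
\[
|\Psi(f^n(p))-\Psi(f^n(q))|\ \geq\ \delta\,\lfloor\mu^n\ell\rfloor,
\]
which tends to infinity with $n$.

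It remains to convert this $\Psi$-separation into $\pi^u$-separation. Let $\tau\in\bbZ^2$ be the deck transformation from \cref{sensiblehomeo}, for which $\Psi(\tau x)=\Psi(x)+1$ and (by linearity of $\pi^u$) $\pi^u(\tau x)=\pi^u(x)+\pi^u(\tau)$. By \cref{lem:almostparallel}, leaves of $\bran$ (and hence of $\feps$, which sits close to $\bran$) lie a bounded distance from $\mathcal{A}^c$, and $\ker\pi^u$ is precisely the $\mathcal{A}^c$ direction; since $\tau$ permutes the leaves of $\feps$ nontrivially, $\tau$ is not parallel to $\mathcal{A}^c$, and therefore $\pi^u(\tau)\neq 0$. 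Combining this equivariance with the $\pi^u$-diameter bound for $\bran$-leaves provided by \cref{lem:Cbound} (transferred to $\feps$ via the conjugating homeomorphism $h_\eps$), a routine comparison yields a constant $C'>0$ such that
\[
|\pi^u(x)-\pi^u(y)|\ \geq\ |\pi^u(\tau)|\cdot|\Psi(x)-\Psi(y)|\,-\,C'
\]
for all $x,y\in\bbR^2$. Taking $x=f^n(p)$ and $y=f^n(q)$ and letting $n\to\infty$ finishes the proof.

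The principal obstacle is the final paragraph: carefully verifying that $\tau$ is transverse to the leaf direction of $\feps$ so that $\pi^u(\tau)\neq 0$, and transferring the leaf-width bound of \cref{lem:Cbound} from $\bran$ to $\feps$. Once those technicalities are in place, the proof is a direct combination of the uniform expansion of $\Cone^u$ with \cref{sensiblehomeo} and \cref{globaldelta}.
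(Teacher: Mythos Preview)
Your argument is correct and follows essentially the same route as the paper: iterate the unstable arc until it is long, cut it into unit pieces, apply \cref{globaldelta} together with the fact that an unstable curve meets each $\feps$-leaf at most once to force the endpoints far apart in the leaf space, and then convert leaf-space separation into $\pi^u$-separation using that $\feps$-leaves have uniformly bounded $\pi^u$-diameter. The only cosmetic difference is that the paper compresses your final paragraph into one line (``leaves of $\feps$ are uniformly bounded in the $\pi^u$-direction, so large leaf-space distance forces large $\pi^u$-distance''), whereas you make the $\tau$-equivariance and the constant $\pi^u(\tau)\neq 0$ explicit; both are the same mechanism.
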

	\begin{proof}
		Recall that the leaves of $\feps$ are uniformly bounded in the $\pi^u$-direction. This implies that for any constant $D>0$, there is $d>0$ such that if $p$ and $q$ are on leaves that are a distance $d$ apart in the leaf space, then $|\pi^u(p)-\pi^u(q)|>D$. Let $J^u$ be an unstable curve with endpoints $p$ and $q$. For large enough $n$, the iterate $f^n(J^u)$ has length greater than $d/\delta$. We can then decompose $f^n(J^u)$ into at least $d/\delta$ subcurves of unit length. Each subcurve, by \cref{globaldelta}, progresses $\delta$ in the leaf space of $\feps$. Since $f^n(J^u)$ cannot intersect a leaf of $\feps$ twice, then $f^n(p)$ and $f^n(q)$ must lie on leaves which are at least a distance $d$ apart in the leaf space. Hence $|\pi^uf^n(p)-\pi^uf^n(q)|>D$.
	\end{proof}
	Now given two leaves of $\bran$ which are connected by an unstable curve, the preceding lemma implies that these two leaves must grow apart in the $\pi^u$-direction under forward iteration by $f$. We use this argument to show that leaves of $\bran$ do not intersect, establishing coherence.
	\begin{prop}\label{coherence}
		Let $f_0:\bbT^2\to\bbT^2$ be a partially hyperbolic endomorphism. If $f_0$ does not admit a periodic centre annulus, then it is dynamically coherent.
	\end{prop}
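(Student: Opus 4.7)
The plan is to argue by contradiction: suppose that $\bran$ is not a true foliation, so two distinct leaves $\mathcal{L}_1\neq\mathcal{L}_2\in\bran$ share a point $p$. Since the leaves of $\bran$ do not topologically cross, the set $\mathcal{L}_1\cap\mathcal{L}_2$ is closed in each leaf, so I may take $p$ to be a point at which $\mathcal{L}_1$ and $\mathcal{L}_2$ diverge immediately on at least one side. The non-crossing property then produces an open region $R$ adjacent to $p$ on that side, one boundary arc of which (in a neighbourhood of $p$) lies in $\mathcal{L}_1$ and the other in $\mathcal{L}_2$.

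The first main step is to extract two distinct points on a common unstable leaf, one in $\mathcal{L}_1$ and the other in $\mathcal{L}_2$. I would pick $q\in R$ very close to $p$ and consider a short arc $J^u\subset\mathcal{F}^u(q)$. Because $\Cone^u$ is uniformly transverse to $E^c$ and the two bounding arcs of $R$ are tangent to $E^c$ at $p$, $J^u$ exits $R$ in both directions through the boundary, meeting $\mathcal{L}_1$ at a point $a$ and $\mathcal{L}_2$ at a point $b$ with $a\neq b$ and $b\in\mathcal{F}^u(a)$.

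The second step is to iterate forward and exploit the uniform bound on leaves of $\bran$ in the $\pi^u$-direction. Since $\bran$ is $f$-invariant and $p\in\mathcal{L}_1\cap\mathcal{L}_2$, the point $f^n(p)$ lies in $f^n(\mathcal{L}_1)\cap f^n(\mathcal{L}_2)$ for every $n\geq 0$. Applying \cref{lem:Cbound} to each of the leaves $f^n(\mathcal{L}_1)$ and $f^n(\mathcal{L}_2)$ through $f^n(p)$ gives
\[
|\pi^u(f^n(a))-\pi^u(f^n(b))|\leq 2C\qquad\text{for every } n\geq 0.
\]
On the other hand, $a\neq b$ lie on a common unstable leaf, so \cref{unstableendpoints} forces $\sup_n|\pi^u(f^n(a))-\pi^u(f^n(b))|=\infty$, a contradiction. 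Thus $\bran$ is a true foliation, and since it is tangent to $E^c$ and $f$-invariant by construction, it witnesses dynamical coherence of $f_0$.

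The main obstacle is the first step: one must genuinely ensure that immediate divergence of $\mathcal{L}_1$ and $\mathcal{L}_2$ at $p$ produces an open region $R$ whose boundary near $p$ consists of two disjoint $C^1$-arcs (one in each leaf), and that a short unstable arc through a nearby point $q\in R$ reaches both arcs before escaping in either direction. This is controlled by shrinking $q$ towards $p$ and using the $C^1$ tangency of $\mathcal{L}_1$ and $\mathcal{L}_2$ to $E^c$ at $p$ together with the uniform transversality between $\Cone^u$ and $E^c$; the remainder of the argument is a short length-versus-projection comparison already set up in the preceding lemmas.
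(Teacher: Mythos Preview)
Your proposal is correct and follows essentially the same approach as the paper: assume two branching leaves share a point $p$, use \cref{lem:Cbound} together with $f^n(p)\in f^n(\mathcal{L}_1)\cap f^n(\mathcal{L}_2)$ to get the uniform $2C$ bound on $|\pi^u f^n(a)-\pi^u f^n(b)|$, and contradict \cref{unstableendpoints}. You spell out the existence of the unstable bridge points $a,b$ more carefully than the paper (which simply asserts ``if $q_1\in\mathcal{L}_1$ and $q_2\in\mathcal{L}_2$ are points connected by an unstable segment''), while the paper adds one closing remark you omit: once $\bran$ is a partition tangent to $E^c$, it is a genuine foliation by a standard fact (Remark~1.10 of Bonatti--Wilkinson), and one checks it descends from $\bbR^2$ to $\bbT^2$ via deck-transformation invariance.
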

	\begin{proof}
        Suppose there are two leaves $\lf_1,\, \lf_2 \in \bran$ which coincide at a point $p\in\bbR^2$. Consider the intervals $\pi^uf^n(\mathcal{L}_1)$ and $\pi^uf^n(\mathcal{L}_2)$. Since both of these intervals contain $\pi^uf^n(p)$, their union is an interval, \cref{lem:Cbound} implies that the length of this interval is at most $2C$. In particular, if $q_1\in\mathcal{L}_1$ and $q_2\in\mathcal{L}_2$ are points connected by an unstable segment, then $|\pi^uf^n(q_1)-\pi^uf^n(q_2)|$ is bounded. This contradicts \cref{unstableendpoints}.
		
		The branching foliation $\bran$ gives a partition of $\bbT^2$ tangent to $E^c$, and therefore is a foliation $\mathcal{F}^c$ by Remark 1.10 in \cite{BoWi}. This foliation is $f$-invariant. One can show that the foliation commutes with deck transformations, so $\mathcal{F}^c$ descends to the desired foliation on $\bbT^2$. Hence $f$ is dynamically coherent.
	\end{proof}
	Let $\mathcal{F}^c$ be the invariant centre foliation from the preceding proof.
	\begin{prop}
		The foliations $\mathcal{F}^c$ and $\mathcal{F}^u$ of $\bbR^2$ have global product structure.
	\end{prop}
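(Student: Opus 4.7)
The plan is to split the claim into the two standard components of global product structure: \emph{uniqueness}, that every leaf of $\mathcal{F}^c$ meets every leaf of $\mathcal{F}^u$ in at most one point, and \emph{existence}, that they meet at least once. Uniqueness is a direct Poincar\'e-Bendixson argument essentially identical to \cref{uniquehit}, and existence is then extracted from uniqueness by combining the monotonicity of $\pi^u$ along unstable leaves with \cref{lem:Cbound}.

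For uniqueness, suppose $\mathcal{L}^c \in \mathcal{F}^c$ meets $\mathcal{L}^u \in \mathcal{F}^u$ at two distinct points. Concatenating the arcs of $\mathcal{L}^c$ and $\mathcal{L}^u$ between these points produces a Jordan curve in $\bbR^2$, bounding a disc $D$. Since $E^c$ is transverse to $E^u$, applying the Poincar\'e-Bendixson theorem to the oriented unit vector field tangent to $\mathcal{F}^u$ on $D$ yields a closed leaf of $\mathcal{F}^u$, contradicting the fact that $\mathcal{F}^u$ is a foliation of $\bbR^2$ by topological lines.

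For existence, fix an unstable leaf $\mathcal{L}^u$ and consider the map $\psi:\mathcal{L}^u \to \bbR^2/\mathcal{F}^c$ sending $x$ to the class of its centre leaf. By \cref{lem:almostparallel}, $\mathcal{F}^c$ lies a bounded distance from $\mathcal{A}^c$; combined with the absence of Reeb components and tannuli established in \cref{sec:inside}, an argument directly analogous to \cref{sensiblehomeo} shows that the leaf space $\bbR^2/\mathcal{F}^c$ is homeomorphic to $\bbR$. Uniqueness makes $\psi$ injective, and transversality of $\mathcal{F}^c$ and $\mathcal{F}^u$ makes it a local homeomorphism, so its image is an open interval in $\bbR$.

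The main remaining step, and the chief technical point, is to prove that $\psi$ is proper. Because $\Cone^u$ is transverse to $\ker \pi^u$ with a uniform angle, the function $\pi^u|_{\mathcal{L}^u}$ is strictly monotone in arc length with derivative bounded away from zero, so $\pi^u(x) \to \pm\infty$ as $x$ escapes to the two ends of $\mathcal{L}^u$. On the other hand, \cref{lem:Cbound} gives that every centre leaf has $\pi^u$-diameter at most $C$, which allows the homeomorphism $\bbR^2/\mathcal{F}^c \cong \bbR$ to be chosen so that the leaf-space coordinate of $x$ agrees with $\pi^u(x)$ up to a uniform additive error $C$. Composing, $\psi$ differs from $\pi^u|_{\mathcal{L}^u}$ by a bounded amount, hence is proper; being an injective local homeomorphism with proper extension to $\bbR$, it must in fact be a surjection onto $\bbR$, completing the proof.
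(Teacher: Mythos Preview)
Your uniqueness argument and the setup of the existence argument (leaf space of $\mathcal{F}^c$ homeomorphic to $\bbR$, $\psi$ an injective local homeomorphism) are fine. The gap is in the properness step: you assert that $\Cone^u$ is transverse to $\ker \pi^u$ with a uniform angle, so that $\pi^u$ is strictly monotone along unstable leaves with derivative bounded away from zero. This is not justified and is in general false. The kernel of $\pi^u$ is the linear direction $\mathcal{A}^c$ of the \emph{linearisation} $A$, whereas $\Cone^u$ is defined by the nonlinear map $f$. The only transversality you have is between $\Cone^u$ and $E^c$; although \cref{lem:almostparallel} says the leaves of $\mathcal{F}^c$ lie a bounded Hausdorff distance from leaves of $\mathcal{A}^c$, this is a large-scale statement and says nothing about the pointwise angle between $E^c$ and the $\mathcal{A}^c$ direction. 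There is no obstruction to the unstable cone containing the $\mathcal{A}^c$ direction at some points, in which case $\pi^u$ need not even be monotone along $\mathcal{L}^u$.

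The paper's proof avoids this by working in the correct leaf space rather than in the $\pi^u$ coordinate. It invokes \cref{globaldelta}, which uses the genuine transversality of $\Cone^u$ to $T\feps$ (a small perturbation of $E^c$) to show that every unit-length $\Cone^u$-curve makes definite progress in the leaf space of $\feps$; combined with \cref{uniquehit} this progress is monotone, so $\mathcal{L}^u$ surjects onto the leaf space and hence $\pi^u(\mathcal{L}^u)=\bbR$. The intersection with any $\mathcal{L}^c$ then follows by an intermediate value argument using $\pi^c(\mathcal{L}^c)=\bbR$. Your argument can be repaired by replacing the unjustified $\pi^u$-monotonicity with this leaf-space monotonicity, after which your bounded-error comparison between the leaf-space coordinate and $\pi^u$ becomes the transfer step rather than the starting point.
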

	\begin{proof}
		\cref{uniquehit} gives uniqueness of an intersection between leaves of the foliations. Observe that since $\mathcal{F}^c$ has no Reeb components, then $\pi^c({\mathcal{L}^c})=\bbR$ for all $\lf^c\in\mathcal{F}^c$. One can use \cref{globaldelta} to show that if $\mathcal{L}^u\in\mathcal{F}^u$, then $\pi^u(\mathcal{L}^u)=\bbR$. By an Intermediate Value Theorem argument, $\lf^c$ must intersect $\lf^u$.
	\end{proof}
	With dynamical coherence established, we complete the proof of \cref{thm:classi} by constructing a leaf conjugacy from $f$ to $A$. When $f$ has a non-hyperbolic linearisation, then we do not necessarily have the Franks semiconjugacy. However, the leaf conjugacy can be obtained as an averaging of a map that is obtained in the construction of the semiconjugacy.
	\begin{prop}
		There exists a map $H^c:\bbR^2\to\mathcal{A}^c$ which sends a point $p\in\bbR^2$ to the unique line in $L\in\mathcal{A}^c$ which satisfies
		\[
		\sup_n \dist(f^n(p),A^n(L))<\infty.
		\]
		Moreover, $H^c$ commutes with deck transformations, and the induced map $\bbR^2\to\bbR^2/\mathcal{A}^c$ is continuous.
	\end{prop}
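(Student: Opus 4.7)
The plan is to define $H^c$ by an averaging procedure in the quotient $\bbR^2/\mathcal{A}^c$, which is naturally identified with $\bbR$ via the projection $\pi^u$. Under this identification, the map induced by $A$ is multiplication by the larger eigenvalue $\lambda^u > 1$: that is, $\pi^u \circ A = \lambda^u \pi^u$. Since $f$ and $A$ are a bounded distance apart on $\bbR^2$, there is a constant $K > 0$ with $|\pi^u(f(q)) - \pi^u(A(q))| \leq K$ for every $q \in \bbR^2$. This is morally the setup of the Franks semiconjugacy, except that I will only pin down the $\pi^u$-coordinate of the limit.

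For each $p \in \bbR^2$, I consider the sequence $a_n(p) := \pi^u(A^{-n}f^n(p)) = (\lambda^u)^{-n}\pi^u(f^n(p))$ and show it converges via the telescoping estimate
\[
|a_{n+1}(p) - a_n(p)| = (\lambda^u)^{-(n+1)} \bigl| \pi^u\bigl(f(f^n(p)) - A(f^n(p))\bigr) \bigr| \leq K (\lambda^u)^{-(n+1)}.
\]
Since the bound is independent of $p$, the $\{a_n(p)\}$ are uniformly Cauchy, and the limit $h^c(p)$ is continuous. I then define $H^c(p)$ to be the unique line $L \in \mathcal{A}^c$ with $\pi^u(L) = h^c(p)$; the induced map $\bbR^2 \to \bbR^2/\mathcal{A}^c$ is exactly $h^c$, which is continuous by construction.

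The defining property and uniqueness are routine. Summing the telescoping estimate gives $|(\lambda^u)^n h^c(p) - \pi^u(f^n(p))| \leq K'$ for some uniform $K'$, and since $\pi^u$ measures transverse distance to $\mathcal{A}^c$-leaves up to a linear factor, this is equivalent to $\sup_n \dist(f^n(p), A^n(L)) < \infty$. For uniqueness, any other line $L' \in \mathcal{A}^c$ with $L' \neq L$ satisfies $\dist(A^n(L), A^n(L')) = (\lambda^u)^n \dist(L, L') \to \infty$, contradicting boundedness. Deck equivariance follows from $f(p+\gamma) = f(p) + A(\gamma)$ for $\gamma \in \bbZ^2$ (since $f - A$ descends to $\bbT^2$), which iterates to $f^n(p+\gamma) = f^n(p) + A^n(\gamma)$; hence $a_n(p+\gamma) = a_n(p) + \pi^u(\gamma)$, and passing to the limit yields $h^c(p+\gamma) = h^c(p) + \pi^u(\gamma)$, which is precisely the statement that $H^c$ commutes with the deck action.

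There is no serious technical obstacle; the point is conceptual. Unlike the Franks semiconjugacy, which pins down a point, we only pin down one coordinate. This is essential because $A$ need not be hyperbolic: when $\lambda^c = 1$ (the non-hyperbolic case), or more generally without strict contraction in the $\mathcal{A}^c$-direction, the analogous telescoping estimate in the $\mathcal{A}^c$-coordinate does not converge, which is exactly why the statement produces a line rather than a point.
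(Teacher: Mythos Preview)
Your proof is correct and is precisely the Franks semiconjugacy construction that the paper cites: the paper's own proof merely references \cite{Franks1} and \cite{hp-survey}, noting that only $|\lambda^u|>1$ is needed, while you have written out the standard telescoping argument explicitly. The approaches are identical in substance; you have simply supplied the details the paper left to the literature.
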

	\begin{proof}
		This map is constructed in the construction of the Franks semiconjugacy in \cite{Franks1}. The proof relies only on the fact that $f$ and $A$ induce the same homomorphism on $\pi_1(\bbT^2)$, and that the greatest eigenvalue of $A$ is greater than $1$. This is the case for all partially hyperbolic surface endomorphisms. For a sketch of the argument, see Theorem 3.1 of \cite{hp-survey}.
	\end{proof}
	Now we establish properties of $H^c$ to show it is suitable for using an averaging technique.
	\begin{lem}\label{bijection}
		We have $H^c(q)=H^c(p)$ if and only if $q\in \mathcal{F}^c(p)$.
	\end{lem}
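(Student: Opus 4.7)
The plan is to prove both directions using the invariance of $\mathcal{F}^c$ together with the projection $\pi^u$, exploiting the fact that every line $A^n(L)$ (for $L \in \mathcal{A}^c$) is parallel to $\mathcal{A}^c$ and hence $\pi^u$-constant. This means $\dist(x, A^n(L))$ is comparable (up to a constant depending on the angle between $\mathcal{A}^c$ and $\ker \pi^u$) to $|\pi^u(x) - \pi^u(A^n(L))|$, where the right-hand side uses the common $\pi^u$-value of $A^n(L)$.

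For the forward direction, suppose $q \in \mathcal{F}^c(p)$. Since $\mathcal{F}^c$ is $f$-invariant, $f^n(q) \in \mathcal{F}^c(f^n(p))$ for every $n$, and \cref{lem:Cbound} then gives $|\pi^u(f^n(p)) - \pi^u(f^n(q))| < C$ uniformly in $n$. Letting $L = H^c(p)$, the triangle inequality yields
\[
|\pi^u(f^n(q)) - \pi^u(A^n(L))| \leq |\pi^u(f^n(q)) - \pi^u(f^n(p))| + |\pi^u(f^n(p)) - \pi^u(A^n(L))|,
\]
and both terms on the right are uniformly bounded. Translating back through the comparison between $\dist(\cdot, A^n(L))$ and $\pi^u$, this shows $\sup_n \dist(f^n(q), A^n(L)) < \infty$, so by uniqueness in the definition of $H^c$, $H^c(q) = L = H^c(p)$.

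For the reverse direction, I would argue by contradiction. Suppose $H^c(q) = H^c(p) = L$ but $q \notin \mathcal{F}^c(p)$. By the global product structure established in the previous proposition, the unstable leaf $\mathcal{F}^u(p)$ meets $\mathcal{F}^c(q)$ at a unique point $q'$, and $q' \neq p$ because $q \notin \mathcal{F}^c(p)$. Applying the forward direction to $q$ and $q'$ (which are on the same centre leaf) gives $H^c(q') = H^c(q) = L$. But now $p$ and $q'$ lie on the same unstable leaf with $q' \neq p$, so \cref{unstableendpoints} yields $\sup_n |\pi^u(f^n(p)) - \pi^u(f^n(q'))| = \infty$. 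On the other hand, both $f^n(p)$ and $f^n(q')$ stay within bounded $\pi^u$-distance of $A^n(L)$ by the definition of $H^c$, giving a uniform bound on $|\pi^u(f^n(p)) - \pi^u(f^n(q'))|$, a contradiction.

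The steps are routine once one exploits that distances to $A^n(L)$ are measured through $\pi^u$; the only mild subtlety is the forward direction, where one must be careful that $f^n(p)$ and $f^n(q)$ need not remain at bounded Euclidean distance, but are guaranteed to remain at bounded $\pi^u$-distance thanks to \cref{lem:Cbound} applied leaf-by-leaf. The reverse direction is where the work of \cref{sec:inside} and the product structure pay off, since without global product structure one could not produce the auxiliary point $q'$ on the same unstable leaf as $p$.
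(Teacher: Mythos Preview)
Your proof is correct and follows essentially the same route as the paper: the forward direction uses \cref{lem:Cbound} to bound $|\pi^u f^n(p)-\pi^u f^n(q)|$ and hence the distance to $A^n(L)$, while the reverse direction uses global product structure to produce a point on $\mathcal{F}^u(p)\cap\mathcal{F}^c(q)$ and then invokes \cref{unstableendpoints} for the contradiction. The only cosmetic difference is that you appeal to the already-proved forward implication to conclude $H^c(q')=L$, whereas the paper reapplies the $\pi^u$-bound on centre leaves directly; the content is the same.
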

	\begin{proof}
		First, suppose that $q\in\mathcal{F}^c(p)$. Then $|\pi^uf^n(p)-\pi^uf^n(q)|<C$. Then if $L\in\mathcal{A}^c$  is such that $\dist(f^n(p),A^n(L))$ is bounded, then as is $\dist(f^n(q),A^n(L))$. Hence $H^c(p)=H^c(q)$.
		
		Now suppose that $H^c(q)=H^c(p)$ but that $q\notin \mathcal{F}^c(p)$. Then $f^n(p)$ and $f^n(q)$ must both lie close to $A^n(L)$ for some $L\in\mathcal{A}^c$. This implies that $|\pi^uf^n(p)-\pi^uf^n(q)|$ is bounded. However, by global product structure, there exists $\hat{q}\in\mathcal{F}^u(p)\cap\mathcal{F}^c(q)$. Since centre leaves are bounded in the $\pi^c$-direction, then $|\pi^uf^n(\hat{q})-\pi^uf^n(q)|$ must bounded, so that $|\pi^uf^n(p)-\pi^uf^n(\hat{q})|$ is also bounded. This contradicts that $\hat{q}\in\mathcal{F}^u(p)$. Hence $q\in\mathcal{F}^c(p)$.
	\end{proof}
	For $q\in\mathcal{F}^c(p)$, let $d_c(p,q)$ be the distance of the leaf segment from $p$ to $q$.
	\begin{lem}\label{length}
		There exists $T>0$ such that if $q\in\mathcal{F}^c(p)$ and $d_c(p,q)>T$, then $|\pi^c(p)-\pi^c(q)|>1$.
	\end{lem}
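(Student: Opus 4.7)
The strategy is a proof by contradiction, using \cref{lengthvolume} to force a long centre arc to oscillate in $\pi^c$ and then appealing to the absence of Reeb components and tannuli in $\mathcal{F}^c$.

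Define
\[
L(p) := \sup\{d_c(p,q) : q \in \mathcal{F}^c(p),\ |\pi^c(p) - \pi^c(q)| \leq 1\}.
\]
Because $\pi^c$ is linear, for every deck transformation $\tau(x) = x + v$ with $v \in \bbZ^2$ we have $\pi^c(\tau p) - \pi^c(\tau q) = \pi^c(p) - \pi^c(q)$; combined with the fact that $\tau$ is an isometry of the centre leaves, this gives $L \circ \tau = L$, so $L$ descends to $\bbT^2$. Setting $T := \sup L$, the conclusion of the lemma is equivalent to the assertion $T < \infty$.

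Assume for contradiction that $T = \infty$ and pick sequences $p_n$ in a fundamental domain and $q_n \in \mathcal{F}^c(p_n)$ with $|\pi^c(p_n) - \pi^c(q_n)| \leq 1$ and $d_c(p_n, q_n) \to \infty$. Let $\gamma_n$ denote the centre arc from $p_n$ to $q_n$. By \cref{lem:Cbound}, $\gamma_n$ lies in a strip of $\pi^u$-width $2C$. If the $\pi^c$-range of $\gamma_n$ remained bounded along a subsequence, $\gamma_n$ would lie in a rectangle of bounded area, and \cref{lengthvolume} applied to $U_1(\gamma_n)$ would bound $\length(\gamma_n)$, contradicting $d_c(p_n,q_n) \to \infty$. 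Hence, along a further subsequence, $\gamma_n$ makes excursions arbitrarily far in $\pi^c$ before returning to within $1$ of $\pi^c(p_n)$.

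Projecting to $\bbT^2$, the images of the $\gamma_n$ are centre arcs of unbounded length on a compact surface, so they accumulate on a nonempty closed $\mathcal{F}^c$-saturated set $S$. Since the oriented centre foliation on $\bbT^2$ is nonsingular, a Poincar\'e--Bendixson argument for the non-singular foliation forces $S$ to contain a closed (circle) leaf $C$ tangent to $E^c$, onto which nearby leaves must spiral: precisely the configuration defining a centre Reeb component or tannulus of $\mathcal{F}^c$. By \cref{nostrip}, this forces the existence of a periodic centre annulus of $f$, contradicting the standing hypothesis.

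The main difficulty is the Poincar\'e--Bendixson step, since $\mathcal{F}^c$ is a priori only $C^0$. The cleanest route is to transfer the accumulation behaviour to the smooth approximating foliation $\feps$ via the homeomorphism $h_\eps$: by \cref{sensiblehomeo} the leaf space of $\feps$ is $\bbR$ and $\feps$ is the lift of a suspension, and this suspension structure---guaranteed exactly by the absence of Reeb components and tannuli---directly forbids leaves from making the kind of bounded-endpoint but unbounded-range excursions produced above, yielding the contradiction.
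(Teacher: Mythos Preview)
Your main line of argument has a genuine gap at the Poincar\'e--Bendixson step. On $\bbT^2$, it is simply not true that a long centre arc (or the $\omega$-limit of a non-compact leaf) must accumulate on a circle leaf with spiralling: the linear foliation of irrational slope is a nonsingular suspension in which every leaf is dense and no circle leaf exists. So the implication ``accumulation set $S$ contains a closed leaf onto which nearby leaves spiral'' fails in exactly the situation you are trying to rule out. You correctly isolate the relevant feature---the arcs $\gamma_n$ make large excursions in $\pi^c$ and then return---but step~7 never uses this; it only uses that the arcs are long, which is not enough.

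Your final paragraph then proposes the ``cleanest route'': pass to the approximating foliation and use that it is a suspension with no Reeb components or tannuli, which ``directly forbids'' such excursions. This is precisely the paper's proof. The paper observes in one line that $\mathcal{F}^c$ is (equivalent to) the suspension of a circle homeomorphism and is almost parallel to $\mathcal{A}^c$; in a suspension, the first-return time to a global transversal circle is bounded above, so arc length along a leaf is uniformly comparable to the number of transversal crossings, and almost-parallelism with $\mathcal{A}^c$ converts this into a lower bound on $|\pi^c(p)-\pi^c(q)|$. No contradiction argument, no length-versus-volume, and no Poincar\'e--Bendixson are needed. Everything in your steps 1--6 is correct but superfluous once you invoke the suspension structure, and your fallback in the last paragraph is the actual proof rather than a repair of the earlier one.
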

	\begin{proof}
		This follows immediately from the fact that $\mathcal{F}^c$ is equivalent to the suspension of a circle homeomorphism that is almost parallel to $\mathcal{A}^c$. 
	\end{proof}
	Finally, we construct the leaf conjugacy.
	\begin{proof}[Proof of \cref{thm:classi}]
		Fix $\mathcal{L}\in\mathcal{F}^c$ and let $\alpha: \bbR\to\bbR^2$ be an arc length parametrisation of $\mathcal{L}$. For $p \in \mathcal{L}$, let $s = \alpha^{-1}(p)$. Let $T$ be as in \cref{length} and define $h(p)$ as the unique point in $H^c(\mathcal{L})$ which satisfies
		\[
		\pi^c h(p) = \frac{1}{T}\int_0^T \pi^c \alpha(s+t)\, dt.
		\]
		
		Define $h$ on each leaf of $\mathcal{F}^c$ to obtain a map $h:\bbR^2\to\bbR^2$. This map $h$ is homeomorphism, and descends to the desired leaf conjugacy on $\bbT^2$. The details of this argument are identical to that of Section 3 and the proof of Theorem B in \cite{enco}, where the map $H^c$ takes the place of the Franks semiconjugacy $H$, with Proposition 3.2 and Lemma 3.3 replaced by \cref{bijection} and \cref{length} of the current paper.
	\end{proof}

	\section{Maps with periodic centre annuli}\label{sec:orbits}
	With the preceding section having proven a classification in the absence of a periodic centre annulus, we complete the classification by addressing those with such annuli. That is, we prove Theorems \ref{thm:nonhypclassi} and \ref{thm:expandclassi}. Let $f:\bbT^2\to\bbT^2$ be partially hyperbolic surface endomorphism which admits a periodic centre annulus. 
	
	Central to our classification is the following proposition, which states that the dynamics on a periodic centre annulus take the form of a skew product.
	
	\begin{prop}\label{prop:skew}
		Suppose $f$ is a partially hyperbolic endomorphism of a closed, oriented
		surface $M$ 
		and that there is an invariant annulus $M_0$ with the following
		properties{:}
		
		\begin{enumerate}
			\item
			$f(M_0) = M_0$ and $f$ restricted to $M_0$ is a covering map;
			
			\item
			the boundary components of $M_0$ are circles tangent to the
			center direction.
			
			\item
			no circle tangent to the center direction intersects the interior
			$U_0$ of
			$M_0$.
		\end{enumerate}
		Then,
		there is an embedding $h: U_0 \to S^1 \times \bbR$
		such that the homeomorphism
		$h \circ f^k \circ h \inv$ from $h(U_0)$ to itself
		is of the form
		\[        h f^k h \inv(v, s) = (A(v), \phi(v,s) )  \]
		where A : $S^1  \to  S^1$ is an expanding linear map
		and $\phi:h(U_0) \to \bbR$ is continuous.
		Moreover,
		if $v  \in  \bbT^2$, then 
		$h \inv(v \times \bbR)$
		is a curve tangent to $E^c$.
	\end{prop}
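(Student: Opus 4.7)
The plan is to reduce the dynamics of $f^k$ on $U_0$ to a skew product over an expanding linear circle map by: first constructing a continuous quotient map $q : \overline{U_0} \to S^1$ whose fibres are closures of centre leaves, identifying the induced self-map on $S^1$ as a topological expanding covering, conjugating this to a linear expanding map via Shub's theorem, and finally parameterizing centre leaves as fibres to obtain the skew-product form.

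The first and most delicate step is the construction of $q$. Restrict the invariant branching foliation $\bran$ of \cref{lem:inbran} to $\overline{U_0}$. By hypothesis~(3), no centre circle meets $U_0$, so every centre leaf there is non-compact. A Poincar\'e--Bendixson argument in the style of \cref{uniquehit}, combined with the length-versus-volume bound \cref{lengthvolume}, prevents centre leaves from accumulating on a recurrent minimal set inside $U_0$; the closure of each centre leaf must instead accumulate on both boundary components of $M_0$. Using that centre leaves do not topologically cross, that the two boundaries are themselves centre leaves, and that the diameters of centre leaf segments are controlled in the transverse direction, one checks that the leaf space of $\overline{U_0}$ under the relation "lie on a common centre leaf" is Hausdorff, compact, and one-dimensional, and arises from a cellular decomposition. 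This leaf space is therefore homeomorphic to $S^1$, giving the continuous quotient $q$.

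Because $f^k$ preserves $E^c$ and fixes $M_0$ setwise, it descends through $q$ to a continuous self-covering $\bar A : S^1 \to S^1$ whose degree equals the degree of $f^k|_{M_0}$. Since $q$ is essentially projection along the centre direction and $Df$ uniformly expands the transverse $\Cone^u$-direction, $\bar A$ is topologically expanding, so by Shub's classification it is topologically conjugate, via some homeomorphism $g$, to a linear expanding map $A : S^1 \to S^1$. For the fibre coordinate, choose a continuous $\Cone^u$-transversal $T$ in $U_0$ that meets each centre leaf exactly once (existence follows from \cref{globaldelta} together with property~(3)) and parameterize each centre leaf by its signed arc length from the unique intersection with $T$, giving a continuous function $s : U_0 \to \bbR$. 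Then $h(p) = (g\circ q(p),\, s(p))$ is the desired embedding $U_0 \to S^1 \times \bbR$: $h\inv(\{v\}\times\bbR)$ is a centre leaf by construction, $h \circ f^k \circ h\inv$ has first coordinate $A(v)$ by the conjugation, and the fibre action defines a continuous $\phi(v,s)$.

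The principal obstacle is showing that the centre leaf space of $\overline{U_0}$ is genuinely a circle. One must rule out pathological limiting behaviour inside $U_0$ (Reeb-type accumulation, non-Hausdorff merging of branching leaves, or leaves bounded away from both boundary circles), which is precisely the content of hypothesis~(3) combined with the minimality of periodic centre annuli and the tools developed in \cref{sec:inside}. A secondary technical point is continuity of the transversal parameterization at the "seam" of the cyclic base, handled by reparameterizing $T$ smoothly and using that branching leaves do not cross. The argument parallels the $\bbT^3$ skew-product construction of \cite{clab}, and a complete adaptation is carried out in \cite{clab2}.
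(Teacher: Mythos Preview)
The paper does not prove this proposition at all: it states only that the result is an adaptation of the arguments in \cite{clab} and defers the complete proof to the auxiliary document \cite{clab2}. Your proposal ends with exactly the same deferral, so at the level of what appears in this paper the two agree. The sketch you prepend is broadly in the spirit of \cite{clab} (quotient to a circle by collapsing centre curves, show the induced circle map is an expanding covering, linearise via Shub, then choose a fibre coordinate), so as an outline it is reasonable.

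That said, if the sketch were meant to stand on its own, two points would need repair. First, you invoke \cref{uniquehit} and \cref{globaldelta}, but both of those lemmas live in \cref{sec:coherent} and are proved under the standing hypothesis that $f_0$ admits \emph{no} periodic centre annulus; they are not available here, and in any case \cref{globaldelta} concerns the approximating foliation $\feps$ on all of $\bbR^2$, not a transversal inside $U_0$. Second, the claim that the leaf space of $\bran$ restricted to $\overline{U_0}$ is Hausdorff and homeomorphic to $S^1$ is precisely where the work lies: $\bran$ is a \emph{branching} foliation, so distinct leaves may share arcs, and hypothesis~(3) alone does not immediately rule out branching in the interior. Establishing that centre curves in $U_0$ give an honest circle's worth of fibres is the substantive content of the adaptation in \cite{clab2}, and your sketch treats it as a checkable detail rather than the main difficulty. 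Since you ultimately cite \cite{clab2} anyway, this matches the paper's treatment; just be aware that the sketch by itself does not constitute a proof.
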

	This result is an analogue of the classification of dynamics of diffeomorphisms with centre-stable tori. In fact, the preceding theorem may be proved by adjusting each of the arguments used in \cite{clab} to the current setting, where one replaces the notion of a region between centre-stable tori with a periodic centre annulus, both of which take the role of $M_0$ in the statement above. Since this is long process which does not use any new ideas, it is completed in an auxiliary document, \cite{clab2}.
	
	With the dynamics inside a periodic centre annulus understood, we turn to outside of the annulus, and in particular the preimages of these annuli. Let $\bran$ be an invariant branching foliation which contains all the periodic centre annuli, as in \cref{lem:inbran}. Let $A:\bbT^2\to\bbT^2$ be the linearisation of $f$. Then the lift of $\bran$ to $\bbR^2$ lies close to the lift of some $A$-invariant linear foliation $\mathcal{A}$. Let $\lambda_1$ be the eigenvalue of $A$ associated to the foliation $\mathcal{A}$, and $\lambda_2$ the other (not necessarily distinct) eigenvalue of $A$.
	\begin{lem}\label{lem:circles}
		Let $\mathcal{S}\in \bran$ be a centre circle. Then the preimage $f^{-1}(\mathcal{S})$ consists of $\lambda_2$ disjoint circles in $\bran$.
	\end{lem}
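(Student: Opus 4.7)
The plan is to combine basic submanifold theory with a covering-degree argument in homology.

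First, since $\mathcal{S}$ is a $C^1$ embedded circle and $f$ is a local diffeomorphism, the preimage $f^{-1}(\mathcal{S})$ is a closed $C^1$ $1$-submanifold of $\bbT^2$. Each connected component is therefore a compact, connected, boundaryless $1$-manifold, hence a circle, and these components are pairwise disjoint. Tangency to $E^c$ is preserved under pullback since $Df$ preserves the invariant line field $E^c$, so each component is tangent to the centre direction.

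Next I would count the components via the covering degree of $f$. Since $\mathcal{S}$ is an embedded circle in $\bran$ and leaves of $\bran$ lie a bounded distance from the linear foliation $\mathcal{A}$ in the universal cover, the homology class $[\mathcal{S}] \in H_1(\bbT^2;\bbZ)$ is a primitive class $v$ parallel to the $\lambda_1$-eigendirection of $A$; in particular this eigendirection must be rational and $\lambda_1,\lambda_2 \in \bbZ$. Each component $\mathcal{L}$ of $f^{-1}(\mathcal{S})$ is itself an embedded circle tangent to $E^c$ close to $\mathcal{A}$, so $[\mathcal{L}] = \pm v$. Because $f$ is homotopic to $A$, we have $f_*[\mathcal{L}] = A_*(\pm v) = \pm \lambda_1 v$, so $f|_{\mathcal{L}} \colon \mathcal{L} \to \mathcal{S}$ is a covering map of unsigned degree $\lambda_1$. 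Since the total degree of $f$ as a cover of $\bbT^2$ is $|\det A| = \lambda_1 \lambda_2$, summing over components forces the number of components of $f^{-1}(\mathcal{S})$ to equal $\lambda_2$.

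Finally I would verify that each of the $\lambda_2$ circles is a leaf of $\bran$. By $f$-invariance, the set of leaves of $\bran$ mapping onto $\mathcal{S}$ is non-empty, and any such leaf is an injectively immersed $C^1$ curve tangent to $E^c$ contained in some component of $f^{-1}(\mathcal{S})$. As a line cannot be injectively immersed into a circle, each such leaf is itself a circle and must coincide with its containing component. To obtain a leaf in every component I would mirror the construction in \cref{lem:inbran}: any preimage circle which is not already a leaf may be inserted into $\bran$ without creating a topological crossing, because the components of $f^{-1}(\mathcal{S})$ are disjoint from one another and any crossing with another leaf $\mathcal{L}'$ would push forward under $f$ to a crossing of $\mathcal{S}$ with $f(\mathcal{L}')$, contradicting that $\bran$ is a branching foliation. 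The main obstacle I anticipate is making this last point precise, so that the preimage circles are genuinely leaves of the $\bran$ supplied by \cref{lem:inbran} rather than of some further enlargement.
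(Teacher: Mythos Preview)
Your core argument---preimage under a covering map gives disjoint circles, each component carries the primitive homology class $v$ in the $\lambda_1$-direction, so $f$ restricted to each component has degree $|\lambda_1|$, and dividing the total degree $|\det A| = |\lambda_1 \lambda_2|$ gives $|\lambda_2|$ components---is exactly the paper's approach. The paper phrases the slope step slightly differently (it argues that $f$ is a finite distance from $A$, so $f(\mathcal{S}')=\mathcal{S}$ having the slope of $\mathcal{A}$ forces $\mathcal{S}'$ to as well), but the content is the same.

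The one place you diverge is your final paragraph, where you try to argue that each preimage circle is genuinely a leaf of $\bran$. The paper's proof simply does not address this: it establishes the count and stops, never verifying membership in $\bran$. So the anxiety you flag is real, but it is a gap the paper itself leaves (and which is harmless for the downstream applications in Theorems~\ref{thm:nonhypclassi} and~\ref{thm:expandclassi}, where only the count of preimage circles matters). Your attempted fix via the construction of \cref{lem:inbran} would work, but as you suspect it produces an enlargement of $\bran$ rather than showing the circles already lie in the original $\bran$; there is no need to pursue it further.
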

	\begin{proof}
		Since $f$ is a covering map, $f^{-1}(\mathcal{S})$ consists of circles, each of which is mapped onto $\mathcal{S}$. These circles cannot be null-homotopic since they are tangent to the centre.  Let $\mathcal{S}'\subset f^{-1}(\mathcal{S})$. Then since $f(\mathcal{S}')=\mathcal{S}$ has the same slope as $\mathcal{A}$ and $f$ is a finite distance from $A$, then $\mathcal{S}'$ is also has the same slope as $\mathcal{A}$. Since the induced homomorphism of $f$ on the fundamental group of $\bbT^2$ is the same as that of $A$, then $f\vert_{\mathcal{S}'}:\mathcal{S}'\to\mathcal{S}$ is a map of degree $\lambda_1$. Then as $f$ has degree $\lambda_1\lambda_2$, the set $f^{-1}(\mathcal{S})$ must consist of $\lambda_2$ distinct circles.
	\end{proof}
	The remaining ingredient for our two classification theorems is that outside the orbits of periodic centre annuli are circles in $\bran$.
	\begin{prop}\label{lem:dense}
		Suppose that $f$ admits a periodic centre annulus. Let $X_1,\dots,X_n$ be the distinct, disjoint periodic centre annuli of $f$. The connected components of the set
		\[
		V = \bbT\setminus \bigcup_i\bigcup_{k\in\bbZ}f^k(X_i)
		\]
		are circles.
	\end{prop}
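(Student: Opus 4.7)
The plan is to construct an invariant branching foliation $\bran$ whose leaves inside $V$ are centre circles, establish that $V$ has empty interior via a length-vs-volume argument, and then conclude each connected component of $V$ is a single centre circle through a topological separation argument.

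First, combining \cref{lem:inbran} with \cref{lem:circles} (that preimages of centre circles in $\bran$ are themselves centre circles), I would choose an invariant centre branching foliation $\bran$ containing the boundary circles of every annulus in the orbit $O := \bigcup_i \bigcup_{k \in \bbZ} f^k(X_i)$: periodic boundaries are incorporated as in \cref{lem:inbran}, and each iterated preimage of such a boundary, being a centre circle, can be added in the same way. By construction no leaf of $\bran$ topologically crosses any of these boundary circles, since forward iteration of such a crossing would yield a leaf of $\bran$ crossing a boundary of some $X_i$. Thus every leaf of $\bran$ lies either entirely inside some annulus of $O$ or entirely in $V$. Moreover, a leaf of $\bran$ contained in $V$ is necessarily a centre circle: any non-circle leaf of a rational-slope branching foliation lies in a Reeb component or tannulus, and by \cref{stripsareperiodic} such a component eventually maps into a periodic centre annulus, hence lies in $O$. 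Since Reeb components and tannuli are also exactly the locus where branching of $\bran$ arises, on $V$ the branching foliation restricts to a genuine foliation by pairwise disjoint centre circles, and every $p \in V$ lies on a unique such circle $\mathcal{L}_p$.

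Next I would show that $V$ has empty interior. If not, $V$ would contain a $\Cone^u$-curve $\gamma$ of positive length $\ell_0$. Since the definition of $O$ gives $f(O) = O = f^{-1}(O)$, $V$ is forward invariant under $f$, and each iterate $f^n(\gamma) \subset V$ is a $\Cone^u$-curve whose length grows exponentially. Applying \cref{lengthvolume} then yields
\[
\mathrm{volume}(U_1(V)) \;\geq\; \mathrm{volume}(U_1(f^n(\gamma))) \;>\; K \cdot \mathrm{length}(f^n(\gamma)) \;\longrightarrow\; \infty,
\]
contradicting the compactness of $\bbT^2$.

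Finally, suppose for contradiction that a connected component $S$ of $V$ strictly contains some centre circle $\mathcal{L} = \mathcal{L}_p$. Then there is $q \in S \setminus \mathcal{L}$ lying on a distinct centre circle $\mathcal{L}_q \subset V$, disjoint from $\mathcal{L}$ by the preceding step. The two parallel essential circles $\mathcal{L}, \mathcal{L}_q$ divide $\bbT^2$ into two open annular regions $A$ and $A'$. Since $V$ has empty interior, neither $A$ nor $A'$ is contained in $V$, and each therefore contains a point of $O$. For $r \in A \cap O$, the connected component $Y$ of $O$ containing $r$ is an open annulus whose boundary consists of centre circles lying in $V$; $Y$ is thus disjoint from $\partial A = \mathcal{L} \cup \mathcal{L}_q$, and being connected must stay within the component $A$ of $\bbT^2 \setminus \partial A$. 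The boundary circles of $Y$ then separate $\mathcal{L}$ from $\mathcal{L}_q$ within $A$, and symmetrically there is an annulus $Y' \subset A' \cap O$ separating them within $A'$. Any path from $p$ to $q$ in $\bbT^2$ must cross $Y \cup Y' \subset O$, contradicting that $p, q \in S \subset V$. The main obstacle is justifying that $\bran$ restricts to a true foliation on $V$ (so that $\mathcal{L}_p$ and $\mathcal{L}_q$ are really disjoint rather than sharing tangency points through branching); this requires showing that the bigon-like branching structures in the BI-style construction are confined to Reeb components and tannuli, hence to $O$.
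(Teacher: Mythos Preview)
Your opening moves match the paper's: both take a branching foliation containing the boundaries of the periodic annuli and use \cref{stripsareperiodic} to see that every leaf of $\bran$ lying in $V$ is a circle, since non-circle leaves live inside Reeb components or tannuli and hence inside $O$. From there, however, the paper finishes differently and avoids both of your remaining obstacles. It does not prove ``$V$ has empty interior'' directly; instead it takes a connected component $Y$ of $V$ with non-empty interior, observes that $Y$ is then a (possibly pinched) annulus foliated by these circles, and reruns the width argument from \cref{stripsareperiodic}: a short unstable curve in $Y$ forces $f^k(Y)$ to have width at least some $\delta>0$ for large $k$, and since disjoint annuli of a fixed slope and width at least $\delta$ are finite in number, the sequence $f^k(Y)$ is eventually periodic. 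That produces a periodic centre annulus inside $V$, a contradiction. In particular the paper never needs your Step~4 claim that $\bran$ restricts to a true foliation on $V$, so the obstacle you flag at the end simply does not arise.

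Your Step~5 has a genuine gap. \cref{lengthvolume} is a statement on the universal cover $\bbR^2$; on $\bbT^2$ a long curve can wind around so that its unit neighbourhood has bounded area, and the chain $\mathrm{vol}(U_1(f^n(\gamma)))>K\cdot\mathrm{length}(f^n(\gamma))$ breaks down. Lifting does not help either: the lift $\tilde V$ of $V$ is $\bbZ^2$-invariant and has infinite area, so the fact that $\mathrm{vol}(U_1(\tilde f^n(\tilde\gamma)))\to\infty$ yields no contradiction. To extract one, you would need to trap $\tilde f^n(\tilde\gamma)$ in a single strip of \emph{bounded} width, which requires knowing that the iterates $f^n(\gamma)$ visit only finitely many components of $V$. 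That is precisely the pre-periodicity step the paper supplies and your argument omits; once you add it, Steps~5 and~6 become unnecessary.
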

	\begin{proof}
		Let $\bran$ be a branching foliation which contains the leaves that saturate all periodic centre annuli and their boundaries. As $V$ is the complement of the preimages of these annuli, then $V$ is also saturated by leaves of $\bran$. A property of foliations on $\bbT^2$ is that if a leaf is not a circle, then it lies inside a tannulus or Reeb component. By \cref{sec:inside}, the non-circle leaves then lie only in periodic centre annuli or their preimages. Moreover, by a Reeb compact leaf argument, the limits of  accumulating boundary circles in the preimages of periodic centre annuli is also a circle. This ensures that $V$ will be both a set saturated by leaves, and these leaves must be circles.
		
		Let $Y \subset V$ be a connected component, and suppose that $Y$ has non-empty interior. Since $Y$ is saturated by leaves of $\bran$, then $Y$ can a priori be either an annulus, or a `pinched annulus', i.e., a deformation retract of an annulus.  Let the width of $Y$ be the Hausdorff distance between its boundary circles, as we used in \cref{sec:inside}. Then arguing similarly to \cref{stripsareperiodic}, for all sufficiently large $k$, $f^k(Y)$ has width bounded from below. Then $f^n(Y)$ must be pre-periodic, so that $V$ contains a periodic centre annulus, contradicting the definition of $V$.
	\end{proof}
	We consider now the setting of a non-hyperbolic linearisation and aim to prove \cref{thm:nonhypclassi}. This relies primarily on a length vs.~volume argument.
	\begin{lem}\label{lem:nonhyp_eig}
		If $A$ is nonhyperbolic, a periodic centre annulus has the slope associated to the eigenvalue $\lambda$ of $A$ for which $\lambda>1$.
	\end{lem}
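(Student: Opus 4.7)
The plan is a length vs.~volume argument exploiting the absence of expansion of $A$ in the eigendirection of magnitude one.

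Suppose for contradiction that $|\lambda_1|=1$, so the slope of a periodic centre annulus $X$ corresponds to the weak eigenvalue of $A$. Let $v_1,v_2$ be eigenvectors of $A$ for $\lambda_1,\lambda_2$, and let $\pi_1,\pi_2$ denote the linear projections onto the corresponding eigenlines. After replacing $f$ by an iterate I may assume $f(X)=X$. Choose a lift $\tilde f:\bbR^2\to\bbR^2$ together with a connected component $\tilde X$ of $\pi\inv(X)$ satisfying $\tilde f(\tilde X)=\tilde X$. Since $\tilde X$ has slope $v_1$, the projection $\pi_2(\tilde X)$ has bounded diameter $W$. Pick a small unstable curve $J^u\subset X$, lift it to $\tilde J^u\subset\tilde X$, and set $\tilde J_n:=\tilde f^n(\tilde J^u)$. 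Then $\tilde J_n\subset\tilde X$, so the $\pi_2$-extent of $\tilde J_n$ is at most $W$, while partial hyperbolicity ensures $\length(\tilde J_n)\geq c\mu^n$ for some $c>0$ and $\mu>1$.

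The key observation is that the $\pi_1$-extent of $\tilde J_n$ grows only linearly in $n$. Since $\tilde f-A$ is bounded on $\bbR^2$, telescoping gives
\[
\tilde f^n - A^n \;=\; \sum_{i=0}^{n-1} A^{n-1-i}\circ(\tilde f-A)\circ \tilde f^{\,i},
\]
and applying $\pi_1$, using $\pi_1\circ A=\lambda_1\pi_1$ together with $|\lambda_1|=1$, yields $|\pi_1(\tilde f^n(x)-A^n(x))|\leq n\,\|\tilde f-A\|_\infty$. Combined with $|\pi_1(A^n(x)-A^n(y))|=|\pi_1(x-y)|$, this shows the $\pi_1$-diameter of $\tilde J_n$ is at most the initial $\pi_1$-diameter of $\tilde J^u$ plus $2n\|\tilde f-A\|_\infty$.

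Therefore $U_1(\tilde J_n)$ is contained in a rectangle (in the $(v_1,v_2)$ coordinates) of area $O(n)$, whereas \cref{lengthvolume} forces $\vol(U_1(\tilde J_n))\geq K\,\length(\tilde J_n)\geq Kc\mu^n$. Comparing the polynomial upper bound with the exponential lower bound yields a contradiction for large $n$. Thus $|\lambda_1|>1$, and after passing to $f^2$ to arrange positive eigenvalues this gives $\lambda_1>1$. The main technical ingredient is the linear-in-$n$ bound on the $\pi_1$-extent; the cancellation $|\lambda_1|=1$ in the telescoping sum is what prevents exponential drift in the $v_1$ direction and allows the bounded-strip geometry to collide with the exponential length growth.
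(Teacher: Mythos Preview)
Your argument is correct and follows essentially the same route as the paper: assume the annulus has slope along the eigenvalue of modulus one, bound the drift of $\tilde f^n$ from $A^n$ in that direction linearly in $n$, observe the strip is bounded in the transverse direction, and derive a contradiction with \cref{lengthvolume} since an unstable segment inside the strip grows exponentially. Your telescoping identity is simply an explicit version of the paper's one-line appeal to the $C^0$ distance between $\tilde f$ and $A$; the content is identical.
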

	\begin{proof}
		Let $X_0\subset\bbT^2$ be a periodic centre annulus. Assume without loss of generality that $X_0$ is invariant, and lift $X_0$ to a strip $X\subset \bbR^2$. Assume that the claim is false, so that $X_0$ has associated eigenvalue $\lambda=1$. By making an affine change of coordinates on $\bbR^2$, we can assume that the linearisation takes the form $A: (x,y)\mapsto (\lambda x,y)$ for the eigenvalue $\lambda>0$ of $A$, and that $X \subset \bbR \times [0,1]$. Let $U = [-1,1]\times [0,1]\subset \tilde{X}$. Lift $f$ to a diffeomorphism $\tilde{f}:\bbR^2\to\bbR^2$ that fixes $X$.
		\begin{sublem}
			There is $D>0$ such that the set $U$ satisfies
			\[
			\tilde{f}^n(U)\subset [-Dn,Dn]\times [0,1].
			\]
		\end{sublem}
		\begin{proof}
			If $C>0$ is the $C^{0}$ distance between $A$ and $f$, then $A^{n}([-1,1]\times\bbR)\subset [-1,1]\times\bbR$, so that $\tilde{f}^{n}([-1,1]\times\bbR)\subset [-1-Cn,1+Cn]\times\bbR$, and so if $D=1$, then $\tilde{f}^{n}([-1,1]\times\bbR)\subset [-Dn,Dn]\times\bbR$. Since $\bbR\times[0,1]$ is $\tilde{f}$ invariant, then $\tilde{f}^{n}([-1,1]\times\bbR)\subset [-Dn,Dn]\times[0,1]$.
		\end{proof}
		Now if $J^u\subset U$ is a small unstable curve unstable curve, then by \cref{lengthvolume}, the volume of $\tilde{f}^n(U)$ must grow exponentially in $n$. However, the sublemma above shows that the volume of $\tilde{f}^n(U)$ can grow at most polynomially, giving a contradiction.
	\end{proof}
	This allows us to conclude the classification in the case of a non-hyperbolic linearisation.
	\begin{proof}[Proof of \cref{thm:nonhypclassi}]
		By \cref{lem:circles} and \cref{lem:nonhyp_eig}, the preimage of the boundary circle of a periodic centre annulus is a single circle. In turn, the preimage of a single periodic centre annulus is a single annulus, which by finiteness of the periodic centre must also be periodic. By \cref{lem:dense}, the finitely many periodic centre annuli together with their boundary circles cover $\bbT^2$.
	\end{proof}
	Finally, we complete the classification in the case of an expanding linearisation.
	\begin{proof}[Proof of \cref{thm:expandclassi}]
		Suppose that $A$ is expanding. Then as both eigenvalues of $A$ are larger than $1$, \cref{lem:circles} shows that the preimage of a periodic centre annulus is necessarily multiple disjoint annuli. By continually iterating backwards, we see that the orbit of a periodic centre annulus consists of infinitely many annuli, and their union is dense by \cref{lem:dense}.
		Observe that points within a given periodic centre annulus remain a bounded distance apart when iterated on the universal cover, implying that the image of such an annulus under the semiconjugacy $H$ is necessarily a circle. By the semiconjugacy property, this circle will be either preperiodic or periodic under $A$. Similarly, a circle $S$ which is not in the orbit of the periodic centre annuli will be mapped to circles by $H$. If $S$ is not preperiodic or periodic under $f$, then $H(S)$ is necessarily transitive under $A$.
	\end{proof}
	
	\printbibliography
\end{document}